\newcommand{\rt}{\rightarrow}
\newcommand{\SA}{\mathscr{A}}
\newcommand{\SI}{\mathscr{I}}
\newcommand{\SX}{\mathscr{X}}
\newcommand{\CA}{\mathcal{A} }
\newcommand{\CC}{\mathcal{C} }
\newcommand{\CF}{\mathcal{F} }
\newcommand{\CI}{\mathcal{I} }
\newcommand{\CJ}{\mathcal{J} }
\newcommand{\Mod}{{\rm{Mod\mbox{-}}}}
\newcommand{\Flat}{{\rm{Flat}}}
\newcommand{\OC}{\mathbb{OC}}
\newcommand{\im}{{\rm{Im}}}
\newcommand{\op}{{\rm{op}}}
\newcommand{\PE}{{\rm{PE}}}
\newcommand{\Coker}{{\rm{Coker}}}
\newcommand{\Ker}{{\rm{Ker}}}
\newcommand{\Prj}{{\rm{Prj}}}
\newcommand{\PPrj}{{\rm{PPrj}}}
\newcommand{\PInj}{{\rm{PInj}}}
\newcommand{\SWC}{{S\mbox{-}\rm{WC}}}
\newcommand{\SSF}{{S\mbox{-}\rm{SF}}}
\newcommand{\SPh}{{S\mbox{-}{\rm{Ph}}}}
\newcommand{\SPI}{{S\mbox{-}{\rm{PInj}}}}
\newcommand{\SPE}{{S{\rm{PE}}}}
\newcommand{\Ph}{{\rm{Ph}}}
\newcommand{\bS}{\mathbb{S}}
\newcommand{\Tor}{{\rm{Tor}}}
\newcommand{\Hom}{{\rm{Hom}}}
\newcommand{\Ext}{{\rm{Ext}}}
\theoremstyle{plain}
\newtheorem{theorem}{Theorem}[section]
\newtheorem{corollary}[theorem]{Corollary}
\newtheorem{lemma}[theorem]{Lemma}
\newtheorem{facts}[theorem]{Facts}
\newtheorem{proposition}[theorem]{Proposition}
\theoremstyle{definition}
\newtheorem{definition}[theorem]{Definition}
\newtheorem{example}[theorem]{Example}
\newtheorem{remark}[theorem]{Remark}
\theoremstyle{plain}
\theoremstyle{definition}
\numberwithin{equation}{section}
\begin{document}

\title[Strong purity and phantom morphisms]{Strong Purity and Phantom Morphisms}

\author[Hafezi, Asadollahi, Sadeghi and Zhang]{R. Hafezi, J. Asadollahi, S. Sadeghi and Y. Zhang}

\address{School of Mathematics and Statistics, Nanjing University of Information Science and Technology, Nanjing, Jiangsu 210044, P. R. China}
\email{hafezi@nuist.edu.cn }

\address{Department of Pure Mathematics, Faculty of Mathematics and Statistics, University of Isfahan, P.O.Box: 81746-73441, Isfahan, Iran}
\email{asadollahi@sci.ui.ac.ir, asadollahi@ipm.ir }

\address{School of Mathematics, Institute for Research in Fundamental Sciences (IPM), P.O.Box: 19395-5746, Tehran, Iran}
\email{somayeh.sadeghi@ipm.ir }

\address{School of Mathematics and Statistics, Nanjing University of Information Science and Technology, Nanjing, Jiangsu 210044, P. R. China}
\email{zhangy2016@nuist.edu.cn }

\makeatletter \@namedef{subjclassname@2020}{\textup{2020} Mathematics Subject Classification} \makeatother

\subjclass[2020]{13B30, 13C60, 13D07, 13D09, 13C11}

\keywords{Purity, Strongly flat module, phantom morphism, covering ideal, Optimistic Conjecture, localization}

\begin{abstract}
Let $R$ be a commutative ring and $S \subseteq R$ be a multiplicative subset. We introduce and study the concept of $S$-purity based on the notion of $S$-strongly flat modules.  The class of $S$-pure injective modules will be studied. We demonstrate that this class is enveloping and explore its closedness under extension. The concept of purity is closely connected to the existence of phantom maps. So we will delve into the study of the $S$-phantom morphisms. We will establish that the $S$-phantom ideal is a precovering ideal and examine the situations where it becomes a covering ideal. Finally, in the last section, we will investigate an ideal version of the `Optimistic Conjecture', raised by Positselski and Sl\'{a}vik.
\end{abstract}

\maketitle

\section{Introduction}
Let $R$ be a commutative ring, $S \subseteq R$ be a multiplicative subset and $R_S$ be the localization of $R$ at $S$. An $R$-module $G$ is called $S$-strongly flat if $\Ext^1_R(G, C)=0$, for every $S$-weakly cotorsion module $C$, i.e. for every $R$-module $C$ with the property that $\Ext^1_R(R_S, C)=0$. Let $\SSF$, resp. $\SWC$, denote the class of $S$-strongly flat, resp. $S$-weakly cotorsion, $R$-modules.

These classes of modules are introduced in \cite[Section 2]{Tr1}, when $R$ is a commutative integral domain and $R_S$ is the quotient field of $R$. In \cite{FS}, they considered an arbitrary commutative ring but $S$ is the multiplicative set of all regular elements. In \cite{P}, the general case where $S$ may contain zero-divisors was considered. For a more recent account on these two classes see \cite{PS1} and \cite{BP}. In the non-commutative case this class is studied in \cite{FN}, when $R$ is a right Ore domain with classical right quotient ring $Q$. See also the survey \cite{Sa}.

It is known that the pair $(\SSF, \SWC)$ forms a complete cotorsion pair. This, in particular, implies that $\SSF$ is a precovering class. So a natural question is when it is a covering class. This question is raised in \cite{Tr2}. In \cite{BS} it is shown that when $R$ is an integral domain and $R_S$ is the quotient field of $R$, then $\SSF$ is a covering class if and only if  $R$ is almost perfect, that is, all proper quotients of $R$ are perfect. Recall that a ring is perfect if all its flat modules are projective. This result is generalized in \cite{FS} to an arbitrary commutative ring $R$ with zero-divisors when $R_S$ is the total ring of quotients. A wide generalization for the case when $S$ may contain zero-divisors is studied in \cite{BP}. It is shown that all $R$-modules have $S$-strongly flat covers if and only if $R$ is an $S$-almost perfect ring, i,e, the localization $R_S$ is a perfect ring and, for every $s \in S$, the quotient ring $R/sR$ is perfect as well. This, in turn, implies that all flat $R$-modules are $S$-strongly flat.

On the other hand, ideal approximation theory is developed in \cite{FGHT}. It provides a wide generalization of the classical approximation theory. In this new theory, modules are replaced by morphisms and additive subcategories are replaced by ideals. The phantom ideal is one of the most important examples of ideals. The concept of purity is closely related to the occurrence of phantom maps. A morphism $\varphi: U\rt Z$ of $R$-modules is called a phantom morphism if the pullback along $\varphi$ of any short exact sequence with the right term $Z$ gives rise to a pure exact sequence. This notion was first studied in homotopy theory \cite{Mc}, then in the setting of triangulated categories \cite{N}, and in the stable category of $k[G]$ modules in \cite{Gn}. It was also studied in the general setting of modules over an associative ring with identity in \cite{H1}. The phantom ideal of the category of $R$-modules, denoted by $\Ph$, consists of all phantom morphisms. In \cite[Theorem 7]{H1}, it is shown that $\Ph$ is a covering ideal.

This paper, aims to develop an ideal approximation theory by introducing the concept of $S$-strong flatness. We are working within a general setting where $R$ is an arbitrary commutative ring and the multiplicative set $S$ may contain zero-divisors. By definition every $S$-strongly flat module $G$ is flat, and thus a short exact sequence $\delta$ ending at an $S$-strongly flat module is pure exact. However, it has additional properties: $\delta_S$, the localization of $\delta$ at $S$, and $\delta/s\delta$ for every $s \in S$ are split exact sequences as sequences of $R_S$-modules and $R/sR$-modules, respectively. This is because $G_S$ and $G/sG$ are projectives as $R_S$ and $R/sR$ modules, respectively (see \cite[Lemma 3.1]{BP}). Building on these properties, we define the notion of $S$-pure exact sequences. A short exact sequence $\delta$ is called $S$-pure exact if it is pure exact and if the sequences $\delta_S=\delta\otimes_RR_S$ and $\delta/s\delta=\delta\otimes_RR/sR$, for every element $s \in S$, are split exact. We show that the collection of all $S$-pure exact sequences is a sub-bifunctor of the bifunctor $\Ext$. Using this, we then define the notion of $S$-pure injective modules as injective modules with respect to the collection of all $S$-pure exact sequences. For a module, we provide an explicit $S$-pure injective preenvelope and, using standard arguments, we show that the class of all $S$-pure injective modules is an enveloping class.

We define $S$-phantom morphisms as those morphisms $\varphi: U\rt Z$ such that the pullback along $\varphi$ of any short exact sequence yields an $S$-pure exact sequence. The collection of all $S$-phantom morphisms form an ideal, denoted by $\SPh$. Based on the existence of $S$-pure injective envelopes, we show that $\SPh$ is a precovering ideal and discuss the situations where it is a covering ideal.

Note that pure-exactness and pure-injectivity for modules over a ring have been introduced by Cohn \cite{Coh}. These notions have been studied in different settings, see for instance \cite{Kra} and \cite{Bel}.

Although the structure of strongly flat modules has been studied for over two decades, their structure is still mysterious. In \cite{PS1} the authors posed a conjecture, called Optimistic Conjecture, hereafter denoted as $(\OC)$, which states that in case the projective dimension of $R_S$ as an $R$-module, does not exceed $1$, a flat $R$-module $F$ is $S$-strongly flat if the localization module $S^{-1}F=F_S$ is a projective $R_S$-module and, for every $s\in S$, the quotient module $F/sF$ is a projective $R/sR$-module. The authors also introduced the concept of right $1$-obtainability of an $R$-module from a given class of $R$-modules, called a `seed class', and provided some positive results towards the conjecture in Theorems 1.3, 1.4, and 1.5 of \cite{PS1}. They demonstrated that the conjecture holds if the multiplicative subset $S$ of $R$ is either countable, consists of nonzero-divisors in $R$, or is bounded, meaning there exists an element $s_0\in S$ such that $sr=0$ for $s\in S$ and $r \in R$ implies that $s_0r=0$. Additionally, in \cite[Proposition 7.13]{BP}, it is shown that the conjecture holds if $R$ is an $S$-h-nil ring, meaning for every element $s \in S$, the ring $R/sR$ is semilocal of Krull dimension $0$. Towards the end of the paper, an ideal version of this conjecture is stated and proven. We remark that domains $R$ with a ring of quotients having projective dimension at most one are extensively studied in \cite{AHT}.

The paper is structured as follows. Section \ref{Sec: Preliminaries}  is dedicated to preliminary results, where we gather known facts about strongly flat modules in the first subsection. For a class $\CC$ of $R$-modules, in \cite{BCE} the authors studied $\CC$-periodic modules, i.e. modules $M$ that fit into a short exact sequence
\[0 \rt M \rt C \rt M \rt 0,\] where $C$ belongs to $\CC$, see also \cite{BG} and \cite{Si}. A $\CC$-periodic module $M$ is called trivial if it belongs to $\CC$. In Subsection \ref{Subsec: PeriodicSF}, we show that $\SSF$-periodic modules are trivial, provided $\OC$ holds for the pair $(R,S)$ (Convention \ref{Conv-2}). In Subsection \ref{Subsec: Phantom}, the basics of ideal approximation theory are reviewed. In particular, we prove some results emphasizing that phantom morphisms could be considered as the morphism counterpart of flat modules.

In Section \ref{Sec: S-Purity} we introduce and study the notion of $S$-purity (Definition \ref{Def; S-pure}). Using the concept of $S$-pure exactness, the notion of $S$-pure injective modules will be defined. We investigate the properties of the class of $S$-pure injective modules. In particular, it is shown that it is a covering class  (Proposition \ref{SsI is Enveloping}). We also consider the situation where this class is closed under extension, see Subsection \ref{Subsec: ClosednessUnder Ext}. It will be shown that if $(\OC)$ holds for the pair $(R, S)$, then the class of $S$-pure injective modules is closed under extension if and only if every $S$-weakly cotorsion module is $S$-pure injective.

Section \ref{Sec: S-Phantom Morphisms} contains the main results of this paper. We introduce the notion of $S$-phantom morphisms (Definition \ref{Def: S-Phantom}) and, using the fact that $S$-pure injective envelopes exist, show that the $S$-phantom ideal is a (special) precovering ideal  (Theorem \ref{Th: S-Phantom Precover}). Moreover, in Subsection \ref{Subsec: S-Phantom Cover}, we investigate when it is a covering ideal. To this end, first, we show that if every phantom morphism is $S$-phantom, then the ring $R$ is $S$-almost perfect, i.e. the localization ring $R_S$ is perfect, and for every $s\in S$, the quotient ring $R/sR$ is perfect. The converse holds if condition $(\OC)$ is satisfied for the pair $(R, S)$.

The final section of the paper discusses an ideal version of the Optimistic Conjecture. It demonstrates that if $\varphi:U\rightarrow Z$ is an $S$-phantom morphism, then $\varphi_S: U_S\rightarrow Z_S$ is a projective morphism, and for every $s\in S$, $\varphi/s\varphi: U/sU\rightarrow Z/sZ$ is a projective morphism. Conversely, it is proven that if $\varphi: U\rightarrow Z$ is a phantom morphism, where $Z$ is $S$-almost flat, then $\varphi$ is $S$-phantom if $\varphi_S: U_S\rightarrow Z_S$ is a projective morphism, and for every element $s \in S$, $\varphi/s\varphi: U/sU\rightarrow Z/sZ$ is a projective morphism (see Propositions \ref{Prop: OC1} and \ref{Prop: OC2}).

\s {\sc{Notations and Conventions.}}\label{Conv-1}
Throughout, $R$ is a commutative ring with identity, $S \subset R$  is a multiplicative subset which may contain some zero-divisors and $R_S$ denotes the localization of $R$ with respect to $S$. We fix a ring $R$. Hence, throughout the paper, $\Prj$ is the class of projective $R$-modules and $\Flat$ denotes the class of flat $R$-modules. The class of $S$-strongly flat modules will be denoted by $\SSF$ and the class of $S$-weakly cotorsion $R$-modules will be denoted by $\SWC$. Let $M$ be an $R$-module. Unless otherwise specified, $M_S$, resp. $M/sM$, for $s \in S$, means $M_S$ as $R_S$-module, resp. $M/sM$ as $R/sR$-module. For instance, when we say $M_S$ is flat, it means $M_S$ is flat as an $R_S$-module.

\section{Preliminaries and some facts}\label{Sec: Preliminaries}
Recall that a ring $R$ is perfect if every flat $R$-module is projective. We need the following definition from \cite{BP}.

\begin{definition}
Let $R$ be a commutative ring and $S \subseteq R$ be a multiplicative subset. We say that $R$ is an $S$-almost perfect ring if the localization ring $R_S$ is perfect and for every $s\in S$, the quotient ring $R/sR$ is perfect.
\end{definition}

\subsection{Strongly flat modules}\label{Subsec: StronglyFlat}
An $R$-module $C$ is called $S$-weakly cotorsion if $\Ext^1_R(R_S, C)=0$. Let $\SWC$ denote the class of $S$-weakly cotorsion $R$-modules. An $R$-module $G$ is called $S$-strongly flat if $G \in {}^{\perp}\SWC$, where \[{}^{\perp}\SWC= \{M \in \Mod R \ | \ \Ext^1_R(M,C)=0, \ {\rm for \ all} \ C \in \SWC \}.\]

It follows from the definition that every cotorsion module is $S$-weakly cotorsion and every $S$-strongly flat module is flat. Recall that an $R$-module $C$ is called cotorsion if $\Ext^1_R(F, C)=0$, for every flat $R$-module $F$.

\begin{facts}\label{Facts} Here we list some known facts on these two classes of modules. See, for instance, \cite{BP}.
\begin{itemize}
\item [$(i)$] Let $G$ be an $R$-module. Then $G$ is $S$-strongly flat if and only if $G$ is a direct summand of an $R$-module $G'$ that fits into the short exact sequence \[0 \rt R^{(\beta)} \rt G' \rt R_{S}^{(\gamma)} \rt 0,\] for some cardinals $\beta$ and $\gamma$.
\item [$(ii)$] If $G$ is an $S$-strongly flat $R$-module, then the localization $G_S$ of $G$ at $S$ is projective as $R_S$-module and, for every $s \in S$, the quotient module $G/sG$ is projective $R/sR$-module.
\item [$(iii)$] $(\SSF, \SWC)$ is a complete cotorsion pair, i.e. $\SSF^{\perp}=\SWC$, $\SSF={}^{\perp}\SWC$ and for every $R$-module $M$, there are short exact sequences
    \[0 \rt C \rt G \rt M \rt 0  \ \ {\rm{and}} \ \ 0 \rt M \rt C' \rt G' \rt 0,\]
    such that $G, G' \in \SSF$ and $C, C' \in \SWC.$ This, in particular, implies that $\SSF$ is a precovering class and $\SWC$ is a preenveloping class.
\item [$(iv)$] $\SWC$ is an enveloping class \cite[Theorem 2.10(3)]{Tr1}.
\item [$(v)$] $\SSF$ is a covering class if and only if every flat $R$-module is $S$-strongly flat if and only if $R$ is an $S$-almost perfect ring \cite[Theorem 7.9]{BP}.
\end{itemize}
\end{facts}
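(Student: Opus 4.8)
The plan is to derive all five items from the single observation that the pair $(\SSF, \SWC)$ is the cotorsion pair generated by the set $\{R, R_S\}$. Indeed, since $R$ is projective we have $\Ext^1_R(R, C) = 0$ for every $C$, so $\SWC = \{C \mid \Ext^1_R(R_S, C) = 0\} = \{R, R_S\}^{\perp}$, while $\SSF = {}^{\perp}\SWC$ by definition. With this in hand I would invoke the Eklof--Trlifaj theorem: a cotorsion pair generated by a set is complete, its right-hand class is the $\Ext$-orthogonal of the generating set, and every object of its left-hand class is a direct summand of a module admitting a transfinite filtration whose consecutive quotients lie in the generating set. This gives $(iii)$ at once --- the orthogonality equalities, the two approximation sequences, and hence that $\SSF$ is precovering and $\SWC$ preenveloping --- and it reduces $(i)$ to reorganizing such a filtration.

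For $(i)$, the nontrivial inclusion is that a strongly flat $G$ has the stated form. Writing $G$ as a summand of an $\{R, R_S\}$-filtered module $M$, I would first push every $R$-step below every $R_S$-step: whenever an $R$-quotient sits above an $R_S$-quotient, the relevant extension $0 \rt R_S \rt E \rt R \rt 0$ splits because $R$ is projective, so the two steps may be interchanged. A transfinite application of this exchange exhibits a submodule $M' \subseteq M$ that is filtered by copies of $R$, hence free, $M' \cong R^{(\beta)}$, with $M/M'$ filtered by copies of $R_S$. The final point is that an $R_S$-filtered module is itself an $R_S$-module (the action of each $s \in S$ is an isomorphism by the five lemma, a property preserved under transfinite extension) and that $\Ext^1_R(R_S, N) = 0$ for every $R_S$-module $N$, because $R \rt R_S$ is a flat ring epimorphism, whence $\Ext^1_R(R_S, N) \cong \Ext^1_{R_S}(R_S, N) = 0$. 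Thus every step of the $R_S$-filtration of $M/M'$ splits and $M/M' \cong R_S^{(\gamma)}$, giving the sequence $0 \rt R^{(\beta)} \rt M \rt R_S^{(\gamma)} \rt 0$ with $G$ a summand of $M$. The converse is immediate: $R^{(\beta)}$ is projective and $R_S^{(\gamma)} \in \SSF$ since $\Ext^1_R(R_S^{(\gamma)}, C) = \prod \Ext^1_R(R_S, C) = 0$, so the extension lies in $\SSF$ (closed under extensions and summands).

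Item $(ii)$ then follows formally from $(i)$. Localizing the sequence $0 \rt R^{(\beta)} \rt G' \rt R_S^{(\gamma)} \rt 0$ at $S$ yields an extension of $R_S^{(\gamma)}$ by $R_S^{(\beta)}$ of $R_S$-modules, which splits since $R_S^{(\gamma)}$ is free over $R_S$; hence $G'_S$, and its summand $G_S$, is projective over $R_S$. Tensoring the same sequence with $R/sR$ --- using $\Tor_1^R(R_S^{(\gamma)}, R/sR) = 0$ as $R_S$ is flat --- gives $0 \rt (R/sR)^{(\beta)} \rt G'/sG' \rt (R_S/sR_S)^{(\gamma)} \rt 0$, and here $R_S/sR_S = 0$ because $s$ is invertible in $R_S$; thus $G'/sG' \cong (R/sR)^{(\beta)}$ is free and $G/sG$ is projective over $R/sR$.

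For the remaining two items I would rely on the literature. Item $(iv)$ upgrades the preenveloping property of $\SWC$ from $(iii)$ to an enveloping one; I would cite \cite[Theorem 2.10(3)]{Tr1}, the relevant input being the closure properties of $\SWC$ (in particular it is closed under products, since $\Ext$ commutes with products in the second variable). Item $(v)$ is the deepest: the direction that $\SSF = \Flat$ forces $\SSF$ to be covering is the flat cover theorem, whereas the converse and the characterization in terms of $S$-almost perfectness --- that $R_S$ is perfect and each $R/sR$ is perfect --- is the genuine obstacle and is exactly the content of \cite[Theorem 7.9]{BP}, generalizing the Bazzoni--Salce theorem. I expect the transfinite reorganization of filtrations in $(i)$ and this ring-theoretic equivalence in $(v)$ to be the two places demanding real work, the rest being formal consequences of the cotorsion-pair setup.
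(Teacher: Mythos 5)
The first thing to note is that the paper does not prove this statement at all: the Facts are recorded as known results, with $(i)$--$(iii)$ and $(v)$ cited to \cite{BP} and $(iv)$ to \cite[Theorem 2.10(3)]{Tr1}, so your attempt has to be measured against those sources rather than against any argument in the text. Measured that way, your reconstruction is correct and is essentially the standard one: identifying $(\SSF,\SWC)$ as the cotorsion pair generated by the set $\{R,R_S\}$ and invoking the Eklof--Trlifaj theorem is exactly how completeness and the filtration description are obtained in \cite{Tr1}, \cite{BP} and \cite[Chapter 6]{GT}, and your deduction of $(ii)$ from $(i)$ by applying $-\otimes_R R_S$ and $-\otimes_R R/sR$ to $0 \rt R^{(\beta)} \rt G' \rt R_S^{(\gamma)} \rt 0$ (using $R_S/sR_S=0$) is precisely the argument of \cite[Lemma 3.1]{BP}. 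Two points deserve attention. First, the one loose step is the ``transfinite exchange'' in $(i)$: swapping adjacent steps does not obviously reorganize an arbitrary filtration at limit ordinals. It can be repaired by induction along the filtration with compatibly chosen splittings (split each $R$-step off into the free part by projectivity; absorb each $R_S$-step into the quotient using $\Ext^1_R(R_S,N)=0$ for every $R_S$-module $N$), but the cleaner route --- and the one in the literature --- avoids reorganizing filtrations altogether: starting from a free presentation $0 \rt K \rt R^{(\beta)} \rt G \rt 0$, the Eklof--Trlifaj special preenvelope of $K$ has an $\{R_S\}$-filtered cokernel $A$, Salce's pushout produces $G'$ sitting in $0 \rt R^{(\beta)} \rt G' \rt A \rt 0$ and in $0 \rt C \rt G' \rt G \rt 0$ with $C\in\SWC$, your own splitting lemma gives $A \cong R_S^{(\gamma)}$, and $\Ext^1_R(G,C)=0$ exhibits $G$ as a direct summand of $G'$. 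Second, in $(iv)$ your parenthetical names the wrong input: closure of $\SWC$ under products is what yields preenvelopes, whereas the envelope statement of \cite[Theorem 2.10(3)]{Tr1} rests on a direct-limit argument of Xu type (compare \cite[Theorem 2.3.8]{Xu}, the same device this paper uses later for $S$-pure injective envelopes); since you defer to the citation there, exactly as the paper does, this is a blemish rather than a gap. For $(v)$ both you and the paper rely on \cite[Theorem 7.9]{BP}, which is appropriate, since that equivalence is a genuine theorem and not a formal consequence of the cotorsion-pair setup.
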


\begin{remark}
The converse of Statement $(ii)$ of the previous facts is valid under certain conditions. Its general validity is a conjecture known as the Optimistic Conjecture  ($\OC$)  in \cite[1.1]{PS1}. The  ($\OC$) states the following: Let $S$ be a multiplicative subset of $R$ such that the projective dimension of $R_S$ as an $R$-module is less than or equal to $1$. Let $F$ be a flat R-module. Then, $F$ is $S$-strongly flat if $F_S$ is a projective $R_S$-module and if $G/sG$ is a projective $R/sR$-module for every $s$ in $S$.
\end{remark}

\s {\sc{Convention.}}\label{Conv-2}
In this paper, when we refer to $(\OC)$ holding for a pair $(R, S)$, we mean that $R$ is a commutative ring and $S \subseteq R$ is a multiplicative set that satisfies the conditions of the Optimistic Conjecture of \cite{PS1}. This means that the projective dimension of $R_S$ as an $R$-module does not exceed $1$, and a flat $R$-module $F$ is $S$-strongly flat if and only if $F_S$ is projective as an $R_S$-module and, for every $s\in S$, $F/sF$ is projective as an $R/sR$-module.

\subsection{Periodic strongly flat modules}\label{Subsec: PeriodicSF}
Let $\SX$ be an arbitrary class of modules. An $R$-module $M$ is called $\SX$-periodic, resp. pure $\SX$-periodic, if it fits into an exact sequence, resp. pure exact sequence, of the form $0\rt M\rt X\rt M\rt 0$, with $X\in\SX$. Using the terminology of \cite{BCE}, we say that $M$ is trivial if it belongs to $\SX$. In \cite[Theorem 2.5]{BG}  it is proved that every flat $\Prj$-periodic module is trivial or equivalently every pure $\Prj$-periodic module is trivial. Moreover, in \cite[Theorem 1.3]{Si} it is proved that every pure $\PPrj$-periodic module is trivial, where $\PPrj$ is the class of pure projective modules. In the following, we deal with the situations when every pure $\SSF$-periodic module is trivial.

\begin{proposition}
Let $R$ be a commutative ring and $S\subset R$  be a multiplicative subset such that $(\OC)$ holds for $(R, S)$. Let $M$ be an $R$-module fitting into a pure exact sequence $0\rt M\rt G\rt M\rt 0$ such that $G$ is $S$-strongly flat. Then $M$ is $S$-strongly flat.
\end{proposition}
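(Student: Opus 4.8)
The plan is to invoke the standing hypothesis that $(\OC)$ holds for $(R,S)$ so as to reduce the statement to three checkable conditions, and then to verify each of them by a base-change argument fed into a projective-periodicity theorem. Concretely, by Convention \ref{Conv-2} it is enough to prove that $M$ is flat, that $M_S$ is projective as an $R_S$-module, and that $M/sM$ is projective as an $R/sR$-module for every $s\in S$. Once these three facts are established, $(\OC)$ delivers the conclusion at once.

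Flatness of $M$ should come for free: since $G$ is $S$-strongly flat it is flat, and the given pure exact sequence $\delta\colon 0\rt M\rt G\rt M\rt 0$ realizes $M$ as a pure submodule of a flat module, which is again flat (equivalently, a short $\Tor$-computation along $\delta$ gives $\Tor^R_1(M,N)=0$ for all $N$, using that $G$ is flat and that the tensored sequence stays left-exact by purity). For the two projectivity conditions, the key point I would establish first is that purity is preserved under the base-change functors $-\otimes_R R_S$ and $-\otimes_R R/sR$: for any $R_S$-module $N$ one has $(A\otimes_R R_S)\otimes_{R_S}N\cong A\otimes_R N$, and likewise over $R/sR$, so applying either functor to $\delta$ and then tensoring over the new base ring with an arbitrary module recovers $\delta\otimes_R N$, which is exact because $\delta$ is pure over $R$. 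Hence $0\rt M_S\rt G_S\rt M_S\rt 0$ is pure exact over $R_S$ and $0\rt M/sM\rt G/sG\rt M/sM\rt 0$ is pure exact over $R/sR$. By Facts \ref{Facts}$(ii)$ the middle terms $G_S$ and $G/sG$ are projective over $R_S$ and over $R/sR$ respectively, so $M_S$ and $M/sM$ are pure $\Prj$-periodic modules over these rings. Applying the periodicity theorem that every pure $\Prj$-periodic module is trivial (\cite[Theorem 2.5]{BG}) over $R_S$ and over each $R/sR$ then yields that $M_S$ is projective over $R_S$ and $M/sM$ is projective over $R/sR$.

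With flatness of $M$, projectivity of $M_S$ over $R_S$, and projectivity of $M/sM$ over $R/sR$ in hand, the assumption that $(\OC)$ holds for $(R,S)$ forces $M$ to be $S$-strongly flat, which finishes the argument. I expect the only genuinely delicate point to be the preservation of purity under the two base-change functors: it is precisely this step that licenses the use of the projective-periodicity theorem over the localized ring $R_S$ and over each quotient $R/sR$, and hence that allows the global conclusion to be assembled from the purely local projectivity statements. Everything else is a direct appeal to the facts recalled in Section \ref{Sec: Preliminaries}.
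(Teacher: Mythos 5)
Your proposal is correct and follows essentially the same route as the paper's proof: reduce via $(\OC)$ to flatness of $M$ together with projectivity of $M_S$ over $R_S$ and of $M/sM$ over $R/sR$, base-change the periodic sequence, invoke \cite[Theorem 2.5]{BG}, and conclude by $(\OC)$. The only cosmetic difference is that you explicitly verify that purity survives the base changes and apply the pure-$\Prj$-periodicity form of that theorem, whereas the paper observes that $M_S$ and $M/sM$ are flat and applies the flat-periodicity form --- two formulations the paper itself records as equivalent in Subsection \ref{Subsec: PeriodicSF}.
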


\begin{proof}
Since $G$ is flat so is $M$. Therefore, $M_S$ and $M/sM$, for every $s\in S$, are flat. On the other hand, by \cite[Lemma 3.1]{BP} $G_S$ and $G/sG$ for every $s\in S$, are projective. Hence, by considering two exact sequences
\[0\rt M_S\rt G_S\rt M_S\rt 0,\]
\[0\rt M/sM\rt G/sG\rt M/sM\rt 0\]
and using \cite[Theorem 2.5]{BG}, we get that $M_S$ and $M/sM$, for every $s\in S$, are projective. Now it follows from $(\OC)$ that $M$ is $S$-strongly flat.
\end{proof}

\begin{corollary}
Let $R$ be a commutative ring and $S\subset R$  be a multiplicative subset such that $(\OC)$ holds. Let $0\rt M\rt G^1\rt G^2\rt \cdots \rt G^n\rt M\rt 0$ be a pure exact sequence such that for every $1\leq i\leq n$, $G^i$  is $S$-strongly flat. Then $M$ is $S$-strongly flat.
\end{corollary}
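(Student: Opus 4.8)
The plan is to reduce the period-$n$ situation to the period-one case already settled in the preceding proposition, by means of a standard ``folding'' of the long exact sequence. First I would break the given pure exact sequence into short exact sequences. Setting $K_0=K_n=M$ and $K_i=\im(G^i\rt G^{i+1})$ for $1\le i\le n-1$, the long exact sequence decomposes into the short exact sequences
\[0\rt K_{i-1}\rt G^i\rt K_i\rt 0,\qquad i=1,\dots,n.\]
The first step is to observe that each of these short exact sequences is itself pure exact. This follows from purity of the original long sequence: tensoring the whole sequence with an arbitrary $R$-module $T$ keeps it exact, and an easy induction on $i$ (using exactness of the tensored complex at the spot $G^i\otimes_R T$ together with purity of the previous short factor) shows that every map $K_{i-1}\otimes_R T\rt G^i\otimes_R T$ is injective, which is precisely the purity of the $i$-th short sequence.

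Next I would fold these sequences together by forming their direct sum. Since tensoring commutes with finite direct sums, a finite direct sum of pure exact sequences is again pure exact, so we obtain a pure exact sequence
\[0\rt N\rt \bigoplus_{i=1}^{n} G^i\rt N\rt 0,\]
where $N=M\oplus K_1\oplus\cdots\oplus K_{n-1}$. Indeed, the left-hand term $\bigoplus_{i=1}^{n}K_{i-1}$ and the right-hand term $\bigoplus_{i=1}^{n}K_i$ are the same module after reindexing, because $K_0=K_n=M$. The middle term $\bigoplus_{i=1}^{n}G^i$ is a finite direct sum of $S$-strongly flat modules, hence $S$-strongly flat, since $\SSF={}^{\perp}\SWC$ is closed under direct sums (Facts \ref{Facts}(iii)).

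Now the preceding proposition applies verbatim to the module $N$, sitting in a pure exact sequence with $S$-strongly flat middle term, and yields that $N$ is $S$-strongly flat. Finally, $M$ is a direct summand of $N$, and $\SSF$ is closed under direct summands (again because $\SSF={}^{\perp}\SWC$), so $M$ is $S$-strongly flat, as required.

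The only step demanding genuine care is the first one, namely that purity of the long exact sequence descends to each of its short exact factors, equivalently that the folded sequence is pure; once this is in hand, the statement is an immediate consequence of the period-one proposition together with the elementary closure of $\SSF$ under finite direct sums and direct summands. The rest of the argument is formal, and in particular no new use of $(\OC)$ is needed beyond its single invocation inside the preceding proposition.
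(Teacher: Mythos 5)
Your proof is correct, and it takes a genuinely different route from the paper's. You fold the period-$n$ sequence into a period-one sequence: after checking that each short factor $0\rt K_{i-1}\rt G^i\rt K_i\rt 0$ is pure exact (your induction works; in fact right-exactness of the tensor product already identifies $\Ker(G^{i-1}\otimes_R T\rt K_{i-1}\otimes_R T)$ with the image of $K_{i-2}\otimes_R T$, so the inductive step is even easier than you suggest), you sum the factors to obtain a pure exact sequence $0\rt N\rt \bigoplus_{i=1}^{n}G^i\rt N\rt 0$ with $N=M\oplus K_1\oplus\cdots\oplus K_{n-1}$, apply the preceding period-one proposition to $N$, and extract $M$ as a direct summand, using that $\SSF={}^{\perp}\SWC$ is closed under direct sums and direct summands (Facts \ref{Facts}). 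The paper argues differently: it works with the $n$-periodic sequence itself, exactly in parallel with the proposition's proof. Purity and flatness of the $G^i$ give that $M$, hence $M_S$ and $M/sM$, are flat; localizing at $S$ and reducing modulo each $s\in S$ yields pure exact $n$-periodic sequences whose middle terms $G^i_S$ and $G^i/sG^i$ are projective, to which the paper applies Simson's $n$-periodicity result \cite[Corollary 1.4]{Si} to conclude that $M_S$ and $M/sM$ are projective; then $(\OC)$ finishes. The trade-off is this: the paper's argument leans on the full $n$-fold pure-periodicity theorem of Simson over $R_S$ and $R/sR$, while your folding argument needs only the period-one case already packaged in the proposition (which itself rests on \cite[Theorem 2.5]{BG}), so your derivation is more self-contained relative to the paper's own results and makes the reduction from period $n$ to period one explicit; the paper's version avoids the folding bookkeeping and yields projectivity of $M_S$ and $M/sM$ directly rather than through a larger module containing $M$ as a summand.
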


\begin{proof}
Since $G^i$, $1\leq i\leq n$, is $S$-strongly flat, it is flat, and hence $M$ is flat. Therefore $M_S$ and $M/sM$, for every $s\in S$, are flat. Moreover, $G^i_S$ and $G^i/sG^i$, for every $s\in S$ and $1\leq i\leq n$, are projective. Hence pure exact sequences
\[0\rt M_S\rt G^1_S\rt G^2_S\rt \cdots\rt G^n_S\rt M_S\rt 0,\] and
\[0\rt M/sM\rt G^1/sG^1\rt G^2/sG^2\rt \cdots \rt G^n/sG^n\rt M/sM\rt 0\]
in view of \cite[Corollary 1.4]{Si}, imply that $M_S$ and $M/sM$, for every $s\in S$, are projective. The result now follows from $(\OC)$.
\end{proof}

\subsection{Phantom morphisms}\label{Subsec: Phantom}
A morphism $\varphi: U\rt Z$ of $R$-modules is called a phantom morphism if the pullback along $\varphi$ of any short exact sequence with right term $Z$, as shown in the top row,
 \[\begin{tikzcd}
 &	0 \rar  &X\rar\dar[equals]&Y'\rar\dar&U\dar{\varphi}\rar& 0\\
 &	0\rar&X\rar &Y\rar& Z\rar & 0
	\end{tikzcd}\]
is a pure exact sequence, or equivalently, if the induced morphism
\[\Tor_1^R(X, \varphi): \Tor_1^R(X, U)\rt \Tor_1^R(X, Z)\]
of abelian groups is $0$ for every $R$-module $X$. A phantom morphism $\varphi: U\rt Z$ is trivial if it factors through a flat module.

\begin{remark}
Phantom morphisms are morphism versions of flat modules. For instance, one can see that $R$ is perfect if and only if every trivial phantom morphism is projective.
Recall that a morphism $f: X\rt A$ of $R$-modules is a projective morphism if $\Ext^1_R(f , B)=0$ for every $R$-module $B$. To illustrate this, let $R$ be a perfect ring and $\varphi: U\rt Z$ be a trivial phantom morphism. Let $F$ be a flat $R$-module such that $\varphi$ factors through it. By assumption $F$ is projective, so the pullback of any short exact sequence along morphism $F\rt Z$  is split. Therefore, the pullback along $\varphi$ is split. Hence $\varphi$ is projective. For the converse, let $F$ be a flat $R$-module. Then $1_F: F\rt F$ is a trivial phantom morphism. According to our assumption, it is a projective morphism, so $F$ is a projective $R$-module.
\end{remark}

The next proposition provides more evidence to support this point of view. It is a morphism version of Lemma 1.10 of \cite{PS2}. First, we need a lemma.

\begin{lemma}\label{IdealVersion4.1}
Let $R\rt R'$ be a morphism of commutative rings. Let $\varphi: U\rt Z$ be a morphism of $R$-modules such that $\Tor^R_i(R', U)=0=\Tor^R_i(R', Z)$, for $i\geq 1$. Then for an $R'$-module $N$ and $i\geq1$, $\Tor^R_i(N, \varphi)=0$ if and only if $\Tor^{R'}_i(N, R'\otimes_R \varphi)=0$.
\end{lemma}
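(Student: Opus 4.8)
The plan is to reduce both sides to the homology of the \emph{same} complex, exploiting that the hypothesis $\Tor^R_i(R',U)=0=\Tor^R_i(R',Z)$ for $i\geq 1$ forces base change along $R\rt R'$ to preserve projective resolutions. First I would fix projective resolutions $P_\bullet\rt U$ and $Q_\bullet\rt Z$ over $R$ and lift $\varphi$ to a chain map $\tilde\varphi\colon P_\bullet\rt Q_\bullet$ (unique up to homotopy), so that $\Tor^R_i(N,\varphi)$ is, by definition, the map induced on $H_i$ by $N\otimes_R\tilde\varphi$.

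The key observation comes next. Since each $P_j$ is projective over $R$, each $R'\otimes_R P_j$ is projective over $R'$, and the homology of $R'\otimes_R P_\bullet$ is precisely $\Tor^R_\bullet(R',U)$, which by hypothesis vanishes in positive degrees and equals $R'\otimes_R U$ in degree $0$. Hence $R'\otimes_R P_\bullet$ is a projective resolution of $R'\otimes_R U$ over $R'$; likewise $R'\otimes_R Q_\bullet$ resolves $R'\otimes_R Z$. Therefore $\Tor^{R'}_i(N,R'\otimes_R U)$ can be computed from $N\otimes_{R'}(R'\otimes_R P_\bullet)$, and similarly with $Q_\bullet$ for $Z$.

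I would then invoke the canonical associativity isomorphism $N\otimes_{R'}(R'\otimes_R P_\bullet)\cong N\otimes_R P_\bullet$ (valid since $N$ is an $R'$-module), which is an isomorphism of complexes, natural in $P_\bullet$. This identifies the complex computing $\Tor^{R'}_\bullet(N,R'\otimes_R U)$ with the one computing $\Tor^R_\bullet(N,U)$, yielding isomorphisms $\Tor^R_i(N,U)\cong\Tor^{R'}_i(N,R'\otimes_R U)$, and the analogous statement for $Z$.

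Finally, applying this associativity isomorphism to the chain map $\tilde\varphi$ shows the two identifications fit into a commutative square
\[
\begin{tikzcd}
\Tor^R_i(N,U)\rar{\Tor^R_i(N,\varphi)}\dar[swap]{\cong} & \Tor^R_i(N,Z)\dar{\cong}\\
\Tor^{R'}_i(N,R'\otimes_R U)\rar{\Tor^{R'}_i(N,R'\otimes_R\varphi)} & \Tor^{R'}_i(N,R'\otimes_R Z),
\end{tikzcd}
\]
whose vertical maps are isomorphisms; the equivalence then follows at once, since the top map vanishes if and only if the bottom one does. The delicate point I expect to be the main obstacle is exactly this naturality in the module variable: one must check that the two vertical identifications are compatible with the map induced by $\varphi$, which requires tracking the lifted chain map $\tilde\varphi$ through both the base-change and the tensor-associativity isomorphisms. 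Equivalently, one could derive the same natural isomorphism from the collapse of the base-change spectral sequence $\Tor^{R'}_p(N,\Tor^R_q(R',-))\Rightarrow\Tor^R_{p+q}(N,-)$ under the hypothesis and appeal to its functoriality in the coefficient module.
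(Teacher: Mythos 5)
Your proposal is correct and takes essentially the same route as the paper: both arguments come down to the commutative square whose vertical maps are the natural base-change isomorphisms $\Tor^R_i(N,U)\cong\Tor^{R'}_i(N,R'\otimes_R U)$ (and likewise for $Z$), which exist precisely because of the hypothesis $\Tor^R_i(R',U)=0=\Tor^R_i(R',Z)$ for $i\geq 1$. The only difference is that the paper obtains these isomorphisms and their naturality by citing Lemma 4.1(b) of Positselski--Sl\'{a}vik \cite{PS2}, whereas you prove them from scratch by noting that $R'\otimes_R P_\bullet$ stays a projective resolution and invoking the cancellation isomorphism $N\otimes_{R'}(R'\otimes_R P_\bullet)\cong N\otimes_R P_\bullet$, so your write-up is simply a self-contained version of the same argument.
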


\begin{proof}
Consider the following commutative diagram
\[\begin{tikzcd}
  	 &&\Tor^R_i(N, U) \ar{rrr}{\Tor^R_i(N, \varphi)}\dar{\cong}&&&\Tor^R_i(N, Z)\dar{\cong}\\
&&\Tor^{R'}_i(N, R'\otimes U)\ar{rrr}{\Tor^{R'}_i(N, R'\otimes\varphi)}&&& \Tor^{R'}_i(N, R'\otimes Z).
	\end{tikzcd}\]
Since by Lemma 4.1(b) of \cite{PS2}, the vertical morphisms are isomorphisms, we get the result.
\end{proof}

Recall that an R-module $M$ is said to be $S$-torsion if, for every element $x \in M$, there exists an element $s \in S$ such that $sx =0$. For any $R$-module $M$, the kernel of the canonical morphism $N \rt N_S$ is an example of an $S$-torsion module.

\begin{proposition}
Let $R$ be a commutative ring and $S\subseteq R$ be a multiplicative subset. Let $\varphi: U\rt Z$ be a morphism such that $\Tor^R_i(R/sR, U)=0=\Tor^R_i(R/sR, Z)$, for all $i\geq1$ and all $s\in S$. Then the morphism $\varphi$ is phantom if and only if the morphisms $\varphi/s\varphi$, for all $s \in S$, and $\varphi_S$ are phantom.
\end{proposition}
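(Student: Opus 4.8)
The goal is to show $\varphi$ is phantom iff all $\varphi/s\varphi$ and $\varphi_S$ are phantom, under the $\Tor$-vanishing hypothesis. The plan is to reduce everything to the $\Tor$-criterion for phantom morphisms, namely that $\psi$ is phantom iff $\Tor_1^R(X,\psi)=0$ for every $R$-module $X$, and then to localize/reduce the test module $X$ using Lemma \ref{IdealVersion4.1}.

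First I would handle the two ring changes uniformly by invoking Lemma \ref{IdealVersion4.1} with $R' = R_S$ and with $R' = R/sR$. The hypothesis guarantees $\Tor_i^R(R/sR, U) = 0 = \Tor_i^R(R/sR, Z)$ for $i \geq 1$ and all $s \in S$; for the localization I would note that $R_S$ is flat over $R$, so $\Tor_i^R(R_S, U) = 0 = \Tor_i^R(R_S, Z)$ for $i \geq 1$ automatically. Thus the lemma applies in both cases, giving: for an $R_S$-module $N$, $\Tor_1^R(N,\varphi) = 0 \iff \Tor_1^{R_S}(N, \varphi_S) = 0$, and for an $R/sR$-module $N$, $\Tor_1^R(N,\varphi) = 0 \iff \Tor_1^{R/sR}(N, \varphi/s\varphi) = 0$.

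\textbf{The forward direction} is then immediate. If $\varphi$ is phantom, then $\Tor_1^R(X,\varphi) = 0$ for every $R$-module $X$; in particular for every $R_S$-module $N$ (viewed as an $R$-module) and every $R/sR$-module $N$. By the equivalences above, $\Tor_1^{R_S}(N,\varphi_S) = 0$ for all $R_S$-modules $N$ and $\Tor_1^{R/sR}(N,\varphi/s\varphi) = 0$ for all $R/sR$-modules $N$, so $\varphi_S$ and each $\varphi/s\varphi$ are phantom.

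\textbf{The converse} is where the real content lies, and I expect it to be the main obstacle. Assuming all $\varphi/s\varphi$ and $\varphi_S$ are phantom, I must show $\Tor_1^R(X,\varphi) = 0$ for an \emph{arbitrary} $R$-module $X$, not merely for modules that are already $R_S$- or $R/sR$-modules. The strategy is to use an $S$-torsion filtration of $X$. Since the kernel of $X \to X_S$ is $S$-torsion, I would fit $X$ into a short exact sequence $0 \to T \to X \to X' \to 0$ where $T$ is $S$-torsion and $X'$ embeds in (or is closely related to) the $R_S$-module $X_S$; then chase the resulting long exact sequence $\cdots \to \Tor_1^R(T,\varphi) \to \Tor_1^R(X,\varphi) \to \Tor_1^R(X',\varphi) \to \cdots$, where naturality of $\Tor_1^R(-,\varphi)$ makes this a morphism of long exact sequences. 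The $R_S$-module piece is controlled by the hypothesis that $\varphi_S$ is phantom (via the localization case of Lemma \ref{IdealVersion4.1}). The genuinely delicate step is the $S$-torsion part: an $S$-torsion module need not be an $R/sR$-module for a single $s$, but it is a directed union (or, more usefully, is built from) modules annihilated by some $s \in S$. I would write $T$ as a filtered colimit of its $s$-torsion submodules $T[s] = \{x : sx = 0\}$, use that $\Tor_1^R(-,\varphi)$ commutes with filtered colimits, and reduce to modules annihilated by a single $s$; such a module $T'$ carries an $R/s^kR$-module structure for suitable $k$, where the phantom property of $\varphi/s^k\varphi$ (itself a consequence of $\varphi/s\varphi$ being phantom, since $sR \subseteq s^kR$ and the relevant quotient rings are compatible along $S$) kills the contribution. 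Carefully threading the $\Tor$-vanishing hypothesis so that Lemma \ref{IdealVersion4.1} applies at each quotient $R/s^kR$, and ensuring the filtered-colimit reduction is compatible with the naturality diagrams, is the part requiring the most care.
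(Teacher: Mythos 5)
Your overall strategy is the same as the paper's: test phantomness via the criterion $\Tor_1^R(X,\varphi)=0$ for all $X$, transfer between $R$, $R_S$ and $R/sR$ with Lemma \ref{IdealVersion4.1}, decompose a test module along the localization map $f_X\colon X\to X_S$, and treat the $S$-torsion part by reduction to $R/sR$-modules. Your forward direction (Tor criterion plus Lemma \ref{IdealVersion4.1}, noting $\Tor_i^R(R_S,-)=0$ by flatness) is correct and is merely a repackaging of the paper's pullback argument. Your filtered-colimit treatment of an $S$-torsion module $T$ — write $T=\bigcup_{s\in S}T[s]$, note each $T[s]$ is an $R/sR$-module, and use that $\Tor_1^R(-,\varphi)$ commutes with filtered colimits so a colimit of zero maps is zero — is exactly the content of Lemma 4.3 of [PS2], which the paper cites instead of reproving; that part of your plan is sound (and the detour through $R/s^kR$ is unnecessary: a module killed by $s$ is already an $R/sR$-module, and in any case $s^k\in S$, so $\varphi/s^k\varphi$ is phantom by hypothesis, not as a ``consequence of $\varphi/s\varphi$ being phantom''; note also $s^kR\subseteq sR$, not the reverse).

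The concrete gap is in your handling of the torsion-free quotient. You set $X'=\im f_X$ and then say this piece ``is controlled by the hypothesis that $\varphi_S$ is phantom via Lemma \ref{IdealVersion4.1}.'' But $X'$ is \emph{not} an $R_S$-module — it is only a submodule of $X_S$ with $S$-torsion cokernel — so the lemma does not apply to it, and vanishing of $\Tor_1^R(X_S,\varphi)$ does not by itself yield vanishing of $\Tor_1^R(X',\varphi)$. This is precisely why the paper introduces the second short exact sequence $0\to \im f_X\to X_S\to \Coker f_X\to 0$ and additionally needs $\Tor_2^R(\Coker f_X,\varphi)=0$: a degree-two instance of the $S$-torsion argument, which uses that a phantom morphism kills $\Tor_i$ for all $i\geq 1$ (dimension shifting) and that Lemma \ref{IdealVersion4.1} holds for all $i\geq 1$. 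Your sketch never mentions $\Tor_2$ or $\Coker f_X$, so this bridge is missing. Finally, a caution about the step you treated as routine: passing from ``the outer vertical maps $\Tor_1(T,\varphi)$ and $\Tor_1(X',\varphi)$ vanish'' to ``the middle map $\Tor_1(X,\varphi)$ vanishes'' in a morphism of long exact sequences is \emph{not} a formal consequence of exactness for natural transformations (unlike for groups): over $\Z$, with $0\to\Z/2\to\Z/4\to\Z/2\to 0$ and $\varphi$ the multiplication by $2$ on $\Z/4$, one has $\Tor_1(\Z/2,\varphi)=0$ yet $\Tor_1(\Z/4,\varphi)\neq 0$. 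So the genuinely delicate point of the converse is this assembly step (which the paper's own write-up also passes over quickly), not the $S$-torsion reduction you singled out as the main obstacle.
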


\begin{proof}
Let $\varphi$ be a phantom morphism. Let $s \in S$. We show that $\varphi/s\varphi:U/sU\rt Z/sZ$ is phantom. Let $0\rt X\rt Y\rt Z/sZ\rt 0$ be a short exact sequence of $R/sR$-modules. Let $c: Z \rt Z/sZ$ be the canonical projection. Since $\varphi$ is phantom, $c\varphi$ is phantom. Therefore $\delta$ in the pullback diagram
\[\begin{tikzcd}
 \delta: &	0 \rar  &X\rar\dar[equals]&W\rar\dar&U\dar{c\varphi}\rar& 0\\
\eta: &	0\rar&X\rar &Y\rar& Z/sZ\rar & 0
	\end{tikzcd}\]
is pure. Hence the following diagram
\[\begin{tikzcd}
 \delta/s\delta: &	0 \rar  &X\rar\dar[equals]&W/sW\rar\dar&U/sU\dar{\varphi/s\varphi}\rar& 0\\
\eta: &	0\rar&X\rar &Y\rar& Z/sZ\rar & 0
	\end{tikzcd}\]
is a pullback diagram and $\delta/s\delta$ is pure. Hence $\varphi/s\varphi$ is phantom. The fact that $\varphi_S$ is a phantom morphism follows by a similar argument.

For the converse, assume that $\varphi/s\varphi$, for all $s \in S$, and $\varphi_S$ are phantom morphisms. We show that $\Tor^R_1(N, \varphi)=0$, for every $R$-module $N$. To this end, we apply the same technique as in the proof of Lemma 1.10 of \cite{PS2}. Consider two short exact sequences
\[0\rt \Ker f_N\rt N\rt \im f_N\rt 0,\]
\[0\rt \im f_N\rt N_S\rt \Coker f_N\rt 0,\]
where $f_N:N\rt N_S$ is the canonical morphism. We have the following two induced exact sequences
\[ \Tor^R_1(\Ker f_N, \varphi)\rt \Tor^R_1(N,\varphi)\rt \Tor^R_1(\im f_N, \varphi),\]
\[ \Tor^R_2(\Coker f_N, \varphi)\rt \Tor^R_1(\im f_N,\varphi)\rt \Tor^R_1(N_S, \varphi)\]
We show that $\Tor^R_1(\Ker f_N, \varphi)=0$ and $\Tor^R_1(\im f_N, \varphi)=0$. To show $\Tor^R_1(\im f_N, \varphi)=0$, given the second long exact sequence, it is enough to show $\Tor^R_2(\Coker f_N, \varphi)=0$ and $\Tor^R_1(N_S,\varphi)=0$.

Now $N_S$ is an $R_S$-module and  by assumption $\varphi_S$ is phantom, so $\Tor^{R_S}_1(N_S, \varphi_S)=0$. Hence, by Lemma \ref{IdealVersion4.1}, we have $\Tor^R_1(N_S,\varphi)=0$.

On the other hand, $\Ker f_N$  and $\Coker f_N$ are $S$-torsion $R$-modules. By Lemma 4.3 of \cite{PS2}, for an $S$-torsion $R$-module $M$, $\Tor^R_i(M,\varphi)=0$, for any $i\geq 1$  if $\Tor^R_i(D,\varphi)=0$, for certain $R/sR$-modules $D$.  But by assumption,  $\Tor^{R/sR}_i(D,\varphi/s\varphi)=0$, for $i=1,2$. Hence by another use of Lemma \ref{IdealVersion4.1}, we get $\Tor^R_i(D,\varphi)=0$.
\end{proof}

\section{$S$-purity}\label{Sec: S-Purity}
Let $R$ be a commutative ring, and $S\subseteq R$ be a multiplicative subset that may contain some zero divisors, as in our setup.

\begin{definition}\label{Def; S-pure}
A short exact sequence $\eta: 0\rt N\rt M\rt L\rt 0$ of $R$-modules is called $S$-pure exact if it is pure exact, the induced sequence $\eta\otimes_R R_S$ is split exact and for every $s \in S$, the induced sequence $\eta\otimes_R R/sR$ is split exact.  In this case, $N$ is called an $S$-pure submodule of $M$. We let $\bS$  denote the collection of all $S$-pure exact sequences.
\end{definition}

\begin{example}
Every short exact sequence $\eta: 0\rt N\rt M\rt R_S\rt 0$ with right term $R_S$ is $S$-pure exact. Indeed, since $R_S$ is a flat $R$-module, $\eta$ is pure exact. Moreover, since $R_S$ is a projective $R_S$-module, $\eta_S: 0\rt M_S\rt N_S\rt R_S\rt 0$ splits. For the validity of the third condition note that $R_S\otimes_R R/sR=0$, for every $s\in S$.
\end{example}

\begin{lemma}\label{subfunctor}
The collection $\bS$ comprise an additive subfunctor $\Ext_\bS$ of $\Ext$.
\end{lemma}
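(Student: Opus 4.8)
The plan is to verify directly that $\bS$ satisfies the axioms of an additive subfunctor of $\Ext$: it must contain every split sequence, be closed under isomorphism, be closed under pullback along arbitrary maps into the right-hand term and under pushout along arbitrary maps out of the left-hand term, and, for each fixed pair $(L,N)$, the $S$-pure exact sequences $0\to N\to M\to L\to 0$ must form a subgroup $\Ext_\bS(L,N)$ of $\Ext^1_R(L,N)$. Rather than checking these by hand for the conjunction of the three defining conditions, I would display $\bS$ as an intersection of simpler subfunctors and use that an intersection of subfunctors is again a subfunctor. The class $\Pext$ of pure exact sequences is already known to be an additive subfunctor of $\Ext$ (Cohn \cite{Coh}), so only the two splitting conditions need to be organised functorially.

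The key step is the following uniform lemma. For a homomorphism of commutative rings $R\to R'$, let $\mathcal{F}_{R'}$ denote the collection of pure exact sequences $\eta$ of $R$-modules for which $\eta\otimes_R R'$ is split exact over $R'$; I claim $\mathcal{F}_{R'}$ is an additive subfunctor contained in $\Pext$. The point is that on a pure exact sequence the right exact functor $-\otimes_R R'$ preserves exactness, so $\eta\otimes_R R'$ is genuinely a short exact sequence of $R'$-modules, and the assignment $\eta\mapsto[\eta\otimes_R R']$ defines a natural group homomorphism $\Pext(L,N)\to\Ext^1_{R'}(L\otimes_R R',N\otimes_R R')$ whose kernel is exactly $\mathcal{F}_{R'}(L,N)$. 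To see naturality I would use the universal descriptions of pullback and pushout in $\Ext$: a ladder of short exact sequences that is the identity on the submodule (resp. quotient) realises the top (resp. bottom) row as a pullback (resp. pushout) of the other. Tensoring such a ladder by $R'$ — legitimate because pullbacks and pushouts of pure sequences are again pure — produces the corresponding ladder over $R'$, yielding $[(f^*\eta)\otimes_R R']=(f\otimes_R R')^*[\eta\otimes_R R']$ and $[(g_*\eta)\otimes_R R']=(g\otimes_R R')_*[\eta\otimes_R R']$. Since pullbacks and pushouts of split sequences split, $\mathcal{F}_{R'}$ is closed under pullback and pushout; its being a subgroup at each $(L,N)$ follows because $-\otimes_R R'$ commutes with finite direct sums and hence with the Baer sum, so the displayed map is additive and $\mathcal{F}_{R'}(L,N)$ is the kernel of a homomorphism.

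With the lemma in hand the proposition follows quickly: by Definition \ref{Def; S-pure} one has
\[\bS=\mathcal{F}_{R_S}\cap\bigcap_{s\in S}\mathcal{F}_{R/sR},\]
each factor being contained in $\Pext$, and an intersection of subfunctors is a subfunctor, so $\bS$ determines an additive subfunctor $\Ext_\bS$ of $\Ext$. I expect the main obstacle to be the second condition for the quotients $R/sR$: because $R/sR$ is not flat over $R$, the functor $-\otimes_R R/sR$ is only right exact, so neither the exactness of $\eta\otimes_R R/sR$ nor the compatibility of $\eta\mapsto[\eta\otimes_R R/sR]$ with the Baer sum is automatic. The resolution is precisely to work inside $\Pext$, where purity forces $\eta\otimes_R R/sR$ to remain exact, and to replace naive functoriality of the non-exact functor by the universal properties of pullback and pushout; once this is set up, the flat case $R_S$ is a strictly easier instance of the same argument.
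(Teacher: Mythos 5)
Your proposal is correct, and its computational core is the same as the paper's: tensor a ladder of pure exact sequences by $R_S$ or $R/sR$, use purity to keep both rows exact, and deduce splitness of one row from splitness of the other because pullbacks (and pushouts) of split sequences split. The packaging, however, is genuinely different. The paper proves the lemma by a direct verification of the standard criterion for a sub-bifunctor: $\bS$ contains the split sequences, is closed under direct sums, and is closed under pullback and pushout, the last point being checked by applying $-\otimes_R R_S$ and $-\otimes_R R/sR$ to the pullback diagram. You instead isolate a uniform lemma — for any ring homomorphism $R\to R'$, the pure exact sequences that split after $-\otimes_R R'$ form the kernel $\mathcal{F}_{R'}$ of a natural additive map $\Pext(L,N)\to \Ext^1_{R'}(L\otimes_R R',\,N\otimes_R R')$ — and then realize $\bS$ as the intersection $\mathcal{F}_{R_S}\cap\bigcap_{s\in S}\mathcal{F}_{R/sR}$ of subfunctors. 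Your route buys three things: the subgroup (Baer sum) structure of $\Ext_\bS(L,N)$ is made explicit as the kernel of a homomorphism, rather than left implicit in closure under direct sums, pullbacks and pushouts; the argument visibly uses nothing about $R_S$ and $R/sR$ beyond their being $R$-algebras, so it generalizes verbatim; and you flag and resolve the one delicate point — $R/sR$ is not flat, so exactness of $\eta\otimes_R R/sR$ and compatibility with pullback/pushout must be extracted from purity via universal properties — which the paper's terse conclusion (``the induced sequences are split, because $\eta\in\bS$, hence $\eta'\in\bS$'') leaves for the reader to supply. What the paper's route buys is brevity: granted the standard criterion for additive subfunctors of $\Ext$, the direct check is a few lines and needs no auxiliary functors.
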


\begin{proof}
It is plain that $\bS$ contains split short exact sequences and is closed under direct sums. We verify that $\bS$  is closed under pullbacks and pushouts.
Let $\eta: 0\rt Y\rt Z\rt X\rt 0$ be in $\bS$ and $\eta': 0\rt Y \rt U\rt A\rt 0$ be the pullback of $\eta$ along morphism $f:A\rt X$, as depicted in the following diagram
\[\begin{tikzcd}
\eta': &	0 \rar  &Y\rar\dar[equals]&U\rar\dar&A\dar{f}\rar& 0\\
\eta:&	0\rar & Y\rar& Z\rar & X \rar & 0.
	\end{tikzcd}\]
Since the class of pure exact sequences is closed under pullbacks, $\eta'$ is pure exact. Now apply the functors $-\otimes_R R_S$ and $-\otimes_R R/sR$ to the above pullback diagram. The induced short exact sequences $\eta_{S}=\eta \otimes_RR_S$ and $\eta_{R/sR}=\eta\otimes_RR/sR$ are split, because $\eta\in \bS$. Hence $\eta' \in \bS$. A similar argument, concludes that $\bS$ is closed under pushout.
\end{proof}

\begin{definition}
An $R$-module $M$ is $S$-pure injective if it is injective with respect to the class $\bS$. The collection of all $S$-pure injective $R$-modules will be denoted by $\SPI$.
\end{definition}

By definition, it is clear that every pure injective module is  $S$-pure injective. Moreover, the adjoint duality of Hom and tensor implies that $\Hom_R(R_S, X)$ and $\Hom_R(R/sR, X)$, for any $R$-module $X$, are $S$-pure injective.

\begin{remark}\label{SsI is Swc}
Every $S$-pure injective $R$-module $M$ is an $S$-weakly cotorsion $R$-module. Indeed, let $\eta: 0\rt M\rt N\rt R_S\rt 0 $ be an element in $\Ext^1_R(R_S, M)$. Since $\eta \in \bS$ and $M$ is  $S$-pure injective, then $\eta$ splits. Hence $\Ext^1_R(R_S, M)=0$ and $M$ is  $S$-weakly cotorsion.
\end{remark}

\begin{lemma}\label{EquivalentConditionsForS-StronglyPure}
A pure exact sequence $0\rt N\rt M\rt L\rt 0$ is $S$-pure if and only if the following two conditions are satisfied.
\begin{itemize}
  \item[$(i)$] For every $R_S$-module $X$, the induced sequence \[0\rt \Hom_R(L, X)\rt \Hom_R(M, X)\rt \Hom_R(N, X)\rt 0\] is exact.
  \item[$(ii)$] For every  $s\in S$ and  every $R/sR$-module $Y$, the induced sequence \[0\rt \Hom_R(L, Y)\rt \Hom_R(M, Y)\rt \Hom_R(N, Y)\rt 0\] is exact.
\end{itemize}
\end{lemma}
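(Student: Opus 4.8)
The plan is to reduce the statement to a purely homological criterion for split exactness, since the sequence $\eta: 0\rt N\rt M\rt L\rt 0$ is already assumed pure exact. By Definition \ref{Def; S-pure}, once purity is granted, being $S$-pure amounts precisely to requiring that $\eta_S=\eta\otimes_R R_S$ split as a sequence of $R_S$-modules and that, for every $s\in S$, $\eta\otimes_R R/sR$ split as a sequence of $R/sR$-modules. So I would prove separately the two equivalences: condition $(i)$ holds if and only if $\eta_S$ splits, and condition $(ii)$ holds if and only if $\eta\otimes_R R/sR$ splits for every $s\in S$.

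First I would record the two standard ingredients. The first is the elementary criterion that a short exact sequence $0\rt A\rt B\rt C\rt 0$ of modules over a ring $T$ splits if and only if, for every $T$-module $X$, the contravariant functor $\Hom_T(-,X)$ carries it to a short exact sequence $0\rt \Hom_T(C,X)\rt \Hom_T(B,X)\rt \Hom_T(A,X)\rt 0$; the only nontrivial implication, that surjectivity for all $X$ forces splitting, is obtained by taking $X=A$ and lifting $1_A$ to a retraction. The second ingredient is the adjunction isomorphism: for an $R$-module $P$ and a $T$-module $X$, where $T=R_S$ or $T=R/sR$, one has a natural isomorphism $\Hom_T(T\otimes_R P, X)\cong \Hom_R(P,X)$, coming from the fact that restriction of scalars is right adjoint to extension of scalars together with $\Hom_T(T,X)\cong X$.

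For condition $(i)$, observe that since $\eta$ is pure exact (indeed $R_S$ is flat), $\eta_S: 0\rt N_S\rt M_S\rt L_S\rt 0$ is a short exact sequence of $R_S$-modules. Applying the splitting criterion with $T=R_S$, the sequence $\eta_S$ splits if and only if, for every $R_S$-module $X$, the sequence $0\rt \Hom_{R_S}(L_S,X)\rt \Hom_{R_S}(M_S,X)\rt \Hom_{R_S}(N_S,X)\rt 0$ is exact. By naturality of the adjunction isomorphism in the module variable, this three-term complex is isomorphic to the one appearing in $(i)$, so the two are exact simultaneously, giving the desired equivalence. The argument for $(ii)$ is identical with $T=R/sR$: purity of $\eta$ guarantees that $\eta\otimes_R R/sR: 0\rt N/sN\rt M/sM\rt L/sL\rt 0$ is short exact, and the adjunction identifies its $\Hom_{R/sR}(-,Y)$-complex with the complex in $(ii)$.

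The proof is essentially formal once these pieces are assembled, so I do not expect a serious obstacle; the only points that require care are bookkeeping. I must verify that the adjunction isomorphisms are natural in $P$, so that they assemble into an isomorphism of three-term $\Hom$-complexes rather than merely term-by-term isomorphisms, which is what lets me transport exactness in both directions. I must also be careful to invoke purity of $\eta$ (or flatness of $R_S$) to know that $\eta_S$ and $\eta\otimes_R R/sR$ are genuine short exact sequences before the splitting criterion can be applied, since in general $-\otimes_R R/sR$ is only right exact.
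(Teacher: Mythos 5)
Your proposal is correct and follows essentially the same route as the paper: both directions rest on the adjunction $\Hom_T(T\otimes_R P,X)\cong\Hom_R(P,X)$ for $T=R_S$ or $R/sR$, and your splitting criterion's nontrivial direction (test against $X=A$ and lift the identity) is exactly the paper's converse argument, which applies condition $(i)$ to the module $\Hom_{R_S}(R_S,N_S)$ and lifts $1_{N_S}$ to a retraction. The only difference is organizational: you abstract the splitting criterion as a standalone lemma, while the paper runs the same computation inline.
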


\begin{proof}
Let  $\eta: 0\rt N\rt M\rt L\rt 0$ be an $S$-pure exact sequence.
Since \[\eta_S: 0\rt N_S\rt M_S\rt L_S\rt 0\] is split, for every $R_S$-module $X$, the sequence
\[0\rt \Hom_{R_S}(L_S, X)\rt \Hom_{R_S}(M_S, X)\rt \Hom_{R_S}(N_S, X)\rt 0\]
is exact. Now by adjunction, we get the short exact sequence
\[0\rt \Hom_R(L, X)\rt \Hom_R(M, X)\rt \Hom_R(N, X)\rt 0.\] The validity of the Statement $(ii)$ follows similarly, using the split short exact sequence
\[\eta/s\eta: 0\rt N/sN\rt M/sM\rt L/sL\rt 0.\]

Now let $\eta$ be a pure exact sequence satisfying conditions $(i)$ and $(ii)$. We have to show that $\eta$ in $\bS$.
By $(i)$, the sequence
{\footnotesize{
\[0\rt \Hom_R(L, \Hom_{R_S}(R_S, N_S))\rt \Hom_R(M, \Hom_{R_S}(R_S, N_S))\rt \Hom_R(N, \Hom_{R_S}(R_S, N_S))\rt 0.\]}}
is exact. So by adjoint duality the sequence
\[0\rt \Hom_{R_S}(L_S, N_S)\rt \Hom_{R_S}(M_S, N_S)\rt \Hom_{R_{S}}(N_S, N_S)\rt 0\]
is exact. Hence the sequence $0\rt N_S\rt M_S\rt L_S\rt 0$ is split exact. By a similar argument, we can show that for every $s\in\bS$, the sequence
\[0\rt N/sN\rt M/sM\rt L/sL\rt 0\] is split exact.
\end{proof}

Let $N$ be an $R$-module and $\PE(N)$ denotes the pure injective envelope of $N$. Let $f_1: N \rt \PE(N)$, $f_2: N \rt \Hom_R(R_S, N_S)$ and, for every $s \in S$, $f_3^s: N \rt \Hom_R(R/sR, N/sN)$ be natural homomorphisms. In the next proposition, we apply these three maps to construct an $S$-pure injective preenvelope of $N$.

\begin{proposition}\label{SsI is PreEnveloping}
Let $N$ be an $R$-module. The map
\[\begin{tikzcd}[column sep=small]
 \eta:\ 0\ar{r} & N\ar{rrr}{[f_1\ f_2 \ f_3]^t} &&& {\PE}(N)\oplus \Hom_R(R_S, N_S)\oplus \prod_{s\in S}\Hom_R(R/sR, N/sN),
 \end{tikzcd}\]
provides an $S$-pure injective preenvelope of $N$.
\end{proposition}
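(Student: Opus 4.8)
The plan is to reduce the statement to two claims about the target module $E := \PE(N)\oplus \Hom_R(R_S, N_S)\oplus \prod_{s\in S}\Hom_R(R/sR, N/sN)$ and the map $\eta$: first, that $E$ is itself $S$-pure injective, and second, that $\eta$ is an $S$-pure monomorphism, i.e.\ that $0\rt N\stackrel{\eta}{\rt} E\rt \Coker\eta\rt 0$ lies in $\bS$. Granting these, the preenvelope property is formal: given $P\in\SPI$ and a map $g\colon N\rt P$, I would form the pushout of the $S$-pure sequence along $g$; since $\bS$ is closed under pushout (Lemma \ref{subfunctor}), the resulting sequence $0\rt P\rt E'\rt \Coker\eta\rt 0$ is again $S$-pure, hence splits because $P$ is $S$-pure injective, and composing the map $E\rt E'$ with the splitting retraction yields the required factorization of $g$ through $\eta$.

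The first claim is routine. The summand $\PE(N)$ is pure injective, hence $S$-pure injective; the summands $\Hom_R(R_S, N_S)$ and $\Hom_R(R/sR, N/sN)$ are $S$-pure injective by the observation recorded after the definition of $S$-pure injectivity. Since injectivity relative to $\bS$ passes through arbitrary products (as $\Hom_R(-,\prod P_i)\cong \prod\Hom_R(-,P_i)$ carries surjectivity of the relevant Hom-sequences to the product) and through finite direct sums, $E$ is $S$-pure injective.

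The substance lies in the second claim, which splits into three verifications. Purity of $\eta$ comes from the first coordinate: writing $\pi_1\colon E\rt \PE(N)$ for the projection, one has $\pi_1\eta=f_1$, so $X\otimes_R f_1=(X\otimes_R\pi_1)(X\otimes_R\eta)$ is injective for every $X$, forcing $X\otimes_R\eta$ to be injective and hence $\eta$ pure. For the splitting of $\eta_S$ I would use the second coordinate together with the evaluation map $\varepsilon\colon \Hom_R(R_S, N_S)\rt N_S$, $\varphi\mapsto\varphi(1)$, which is $R_S$-linear and satisfies $\varepsilon\circ f_2=\lambda_N$, the localization map $N\rt N_S$. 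Localizing, $\varepsilon_S\circ(f_2)_S$ is the canonical isomorphism $N_S\cong (N_S)_S$, so $(f_2)_S$ is a split monomorphism of $R_S$-modules; precomposing its retraction with the localized projection $E_S\rt \Hom_R(R_S,N_S)_S$ splits $\eta_S$. The splitting of $\eta\otimes_R R/sR$ is entirely parallel, now using the third coordinate and the evaluation map $\Hom_R(R/sR, N/sN)\rt N/sN$, whose composite with $f_3^s$ is the canonical surjection $N\rt N/sN$; this surjection becomes an isomorphism after applying $-\otimes_R R/sR$, since $s$ already annihilates $N/sN$, which exhibits $f_3^s\otimes_R R/sR$ as a split monomorphism and splits $\eta\otimes_R R/sR$.

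The step requiring the most care is this last pair of splitting computations. One must pin down the natural maps $f_2$ and $f_3^s$ explicitly as the unit-type maps $n\mapsto(x\mapsto x\cdot\lambda_N(n))$ and $n\mapsto(\bar r\mapsto \bar r\cdot\bar n)$, and then verify that post-composition with the evaluation maps recovers precisely the structural localization and quotient maps, whose base changes $\lambda_N\otimes_R R_S$ and $q\otimes_R R/sR$ are the canonical identifications $(N_S)_S\cong N_S$ and $(N/sN)\otimes_R R/sR\cong N/sN$. Once these identities are confirmed the three conditions of Definition \ref{Def; S-pure} hold, so $\eta\in\bS$, and the preenvelope property is delivered by the pushout argument of the first paragraph.
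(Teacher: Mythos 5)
Your proposal is correct, and its skeleton agrees with the paper's: both arguments extract purity of $\eta$ from the coordinate $f_1$ and handle the $R_S$- and $R/sR$-conditions through $f_2$ and $f_3$. The difference is which side of the Hom--tensor adjunction carries the weight. The paper invokes Lemma \ref{EquivalentConditionsForS-StronglyPure} and checks its Hom-exactness conditions: an arbitrary $\alpha\colon N\rt U$ into an $R_S$-module $U$ is extended to $\overline{\alpha}\colon N_S\rt U$, and $\phi\colon\Hom_R(R_S,N_S)\rt U$ defined by $\phi(g)=\overline{\alpha}\,g(1)$ satisfies $\phi f_2=\alpha$ (similarly over $R/sR$). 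You instead verify the tensor-splitting clauses of Definition \ref{Def; S-pure} directly, using evaluation at $1$ to produce retractions of $(f_2)_S$ and of $f_3^s\otimes_R R/sR$; note that the paper's $\phi$ is precisely $\overline{\alpha}$ composed with your evaluation map, so the two computations are adjoint to one another rather than genuinely independent. What each buys: the paper's route is shorter because Lemma \ref{EquivalentConditionsForS-StronglyPure} is already available, while yours is self-contained at the level of the definition and exhibits the splittings explicitly. You also spell out two points the paper leaves tacit, and correctly so: that the target module is itself $S$-pure injective ($\PE(N)$ is pure injective, the Hom-summands are $S$-pure injective by the remark following the definition of $\SPI$, and injectivity relative to $\bS$ passes to products), and that the preenvelope property is then formal, via your pushout argument through Lemma \ref{subfunctor} (equivalently, directly from the definition of injectivity with respect to $\bS$). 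I see no gap.
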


\begin{proof}
Consider the exact sequence
 \[\begin{tikzcd}[column sep=small]
 \eta:\ 0\ar{r} & N\ar{rrr}{[f_1 \  f_2 \ f_3]^t} &&& {\PE}(N)\oplus \Hom_R(R_S, N_S)\oplus \prod_{s\in S}\Hom_R(R/sR, N/sN)\ar{r} & L\ar{r} & 0.
 \end{tikzcd}\]
Since $f_1:N \rt \PE(N)$ is a pure monomorphism, $\eta$ is pure exact. By Lemma \ref{EquivalentConditionsForS-StronglyPure}, to complete the proof, we need to verify that for every $R_S$-module $U$ and $R/sR$-module $W$, where $s \in S$, $\eta$ is $\Hom_R(-, U)$-exact and $\Hom_R(-, W)$-exact.

Let $\alpha\in\Hom_R(N,U)$. Since $U$ is an $R_S$-module, $\alpha: N\rt U$ can be extended to $\overline{\alpha}: N_S\rt U$. Now we define $\phi: \Hom_R(R_S, N_S)\rt U$, $\phi(g)=\overline{\alpha} g(1)$. It is easy to see that for every $x\in N$, $\phi f_2(x)=\alpha(x)$. Hence $\eta$ is $\Hom_R(-, U)$-exact.

To show that $\eta$ is $\Hom_R(-, W)$-exact, pick $\beta\in \Hom_R(N, W)$. Since for every $s\in S$, $W$ is an $R/sR$-module, $\beta: N\rt W$ can be extended to $\overline{\beta}: N/sN\rt W$. Now we define $\psi: \Hom_R(R/sR, N/sN)\rt W$, $\psi(h)=\overline{\beta}h(1+sR)$. Therefore for every $x\in N$, we have $\psi f_3(x)=\beta(x)$. Hence  $\eta$ is $\Hom_R(-, W)$-exact.
\end{proof}

Let $0\rt N \rt M\rt L\rt 0$ be an $S$-pure injective extension of module $N$. It is called a generator if for any other $S$-pure injective extension $0\rt N\rt M'\rt L'\rt 0$  of $N$, there exists a commutative diagram
\[\begin{tikzcd}
 &	0 \rar  &N\rar\dar[equals]&M'\rar\dar{g}&L'\dar{f}\rar& 0\\
&	0\rar & N\rar& M\rar & L\rar & 0.
	\end{tikzcd}\]
Moreover, such a generator is called minimal, if any commutative diagram
\[\begin{tikzcd}
 &	0 \rar  &N\rar\dar[equals]&M\rar\dar{1_M}&L\dar{f}\rar& 0\\
&	0\rar & N\rar& M\rar & L\rar & 0
	\end{tikzcd}\]
implies that $f$ is an automorphism.

The next proposition, by using a standard argument, shows that the class of $S$-pure injective modules is an enveloping class. We preface it with a lemma.

\begin{lemma}\label{SsI closed under Limit}
Let $I$ be a directed index set and $\lbrace 0\rt N\rt M_i\rt L_i\rt 0  \ | \   i,j\in I\rbrace$
be a direct system of the $S$-pure extensions of $N$. Then the sequence
\[0\rt N\rt \lim_{\rightarrow} M_i\rt \lim_{\rightarrow} L_i\rt 0\]
is an $S$-pure injective extension of $N$.
\end{lemma}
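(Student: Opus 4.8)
The plan is to verify, for the colimit sequence $\eta\colon 0\rt N\rt \lim_{\rightarrow}M_i\rt \lim_{\rightarrow}L_i\rt 0$, the three defining conditions of Definition \ref{Def; S-pure}. First note that $\eta$ is exact: the transition maps of the system restrict to the identity on the common left term, so the system on $N$ is constant, and since filtered colimits are exact in the category of $R$-modules, $\eta$ is a short exact sequence with left term $N$. Purity is equally formal: the functor $-\otimes_R X$ commutes with direct limits and a filtered colimit of exact sequences is exact, so $\eta\otimes_R X=\lim_{\rightarrow}(\eta_i\otimes_R X)$ is exact for every $R$-module $X$ because each $\eta_i$ is pure. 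Thus $\eta$ is pure exact, and the whole content of the lemma lies in showing that $\eta_S$ and, for each $s\in S$, $\eta/s\eta$ are split.

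For these two splitting conditions I would feed $\eta$ into Lemma \ref{EquivalentConditionsForS-StronglyPure}: since $\eta$ is already pure, it is enough to prove that $\Hom_R(\eta, X)$ is exact for every $R_S$-module $X$ and that $\Hom_R(\eta, Y)$ is exact for every $R/sR$-module $Y$. Left exactness being automatic, the sole issue is the surjectivity of $\Hom_R(\lim_{\rightarrow}M_i, X)\rt \Hom_R(N, X)$. Rewriting $\Hom_R(\lim_{\rightarrow}M_i, X)\cong \lim_{\leftarrow}\Hom_R(M_i, X)$, this asks that a given $\alpha\colon N\rt X$ lift to a family $(\beta_i)_i$ with $\beta_i|_N=\alpha$ that is compatible along the transition maps $\mu_{ij}\colon M_i\rt M_j$, i.e. $\beta_j\circ\mu_{ij}=\beta_i$. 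Each single $\beta_i$ exists, since applying Lemma \ref{EquivalentConditionsForS-StronglyPure} to $\eta_i\in\bS$ makes $\Hom_R(M_i, X)\rt \Hom_R(N, X)$ surjective; we therefore have an inverse system of surjections onto the constant group $\Hom_R(N, X)$ with kernel system $\{\Hom_R(L_i, X)\}$, and the situation for $R/sR$-modules $Y$ is identical.

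The main obstacle is exactly the coherence of these lifts. The passage from level-wise surjectivity to surjectivity of the inverse limit is governed by the first derived limit ${\lim_{\leftarrow}}^{1}\Hom_R(L_i, X)$ attached to the short exact sequence of inverse systems $0\rt \Hom_R(L_i, X)\rt \Hom_R(M_i, X)\rt \Hom_R(N, X)\rt 0$: the connecting map $\Hom_R(N, X)\rt {\lim_{\leftarrow}}^{1}\Hom_R(L_i, X)$ measures precisely the failure to choose the $\beta_i$ compatibly. I would dispose of it by checking that $\{\Hom_R(L_i, X)\}$ satisfies the Mittag-Leffler condition, so that this derived limit vanishes; this is where the particular shape of the directed systems to which the lemma is applied is used, rather than mere directedness, and it is the step that deserves the most care. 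Once a compatible family $(\beta_i)$ is secured, it induces $\beta\colon \lim_{\rightarrow}M_i\rt X$ extending $\alpha$, which is condition $(i)$ of Lemma \ref{EquivalentConditionsForS-StronglyPure}; condition $(ii)$ follows verbatim with $Y$ in place of $X$. Hence $\eta\in\bS$, which is the desired conclusion that the colimit is an $S$-pure extension of $N$.
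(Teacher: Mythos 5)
Your first paragraph is correct, but note that it already constitutes the \emph{entire} proof given in the paper: the paper's argument is the single sentence that $A\otimes_R-$ preserves direct limits and that direct limits are exact. That establishes exactness and pure exactness of $\eta$, and identifies $\eta_S$ and $\eta/s\eta$ with the direct limits of the split sequences $(\eta_i)_S$ and $\eta_i/s\eta_i$; but a direct limit of split short exact sequences is in general only pure exact, not split, so the two splitting requirements of Definition \ref{Def; S-pure} are not addressed by the paper at all. You correctly isolate them as the real content of the lemma, and your reduction --- Lemma \ref{EquivalentConditionsForS-StronglyPure}, the isomorphism $\Hom_R(\varinjlim M_i,X)\cong\varprojlim\Hom_R(M_i,X)$, and the obstruction given by the connecting map into ${\varprojlim}^{1}\Hom_R(L_i,X)$ --- is the right formulation of what is missing.

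The gap is the step you flag yourself, and it cannot be closed in the stated generality: nothing in the hypotheses yields the Mittag--Leffler condition for $\{\Hom_R(L_i,X)\}$ (and for uncountable directed sets Mittag--Leffler alone does not even force ${\varprojlim}^{1}$ to vanish). In fact the lemma, read literally for arbitrary directed systems, is false. Split extensions of $N$ always lie in $\bS$, and every pure exact sequence $0\rt N\rt M\rt L\rt 0$ is the filtered colimit of its pullbacks along finitely presented modules $L_i\rt L$, each of which splits by purity; so the lemma would assert that every pure extension of $N$ is $S$-pure. This fails, for example, with $R=\Z$, $S=\{2^n\}_{n\geq 0}$, $N=\Z[1/2]$: the diagonal embedding into the product of the rings of $p$-adic integers gives a pure non-split extension $\eta\colon 0\rt\Z[1/2]\rt\prod_{p\neq 2}\Z_p\rt Q\rt 0$ of $\Z[1/2]$-modules, which is pure as a sequence of $\Z$-modules and satisfies $\eta_S\cong\eta$; hence $\eta\notin\bS$, although it is a direct limit of split (hence $S$-pure) extensions of $N$. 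Consequently any correct proof must use more than directedness: either additional hypotheses on the transition maps (such as those enjoyed by the systems actually arising in the envelope construction of Proposition \ref{SsI is Enveloping}), or a weakening of Definition \ref{Def; S-pure} requiring $\eta_S$ and $\eta/s\eta$ to be merely pure exact over $R_S$ and $R/sR$, a condition that does pass to direct limits. In short, your attempt stalls exactly where the paper's own one-line proof is silently incomplete; the difference is that you noticed the obstruction.
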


\begin{proof}
Since, for every $R$-module $A$, the functor $A\otimes_R-$ preserves direct limits and the direct limit is an exact functor, we get the result.
\end{proof}

\begin{proposition}\label{SsI is Enveloping}
Let $R$ be a commutative ring and $S\subseteq R$ be a multiplicative subset. Then every $R$-module $N$ has an $S$-pure injective envelope.
\end{proposition}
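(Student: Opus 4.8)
The plan is to run the standard ``minimal generator'' construction of an envelope, using as input the $S$-pure injective preenvelope of Proposition~\ref{SsI is PreEnveloping} together with the direct-limit stability of $S$-pure extensions from Lemma~\ref{SsI closed under Limit}. Fix the preenvelope $\eta_0\colon 0\rt N\rt E\rt L\rt 0$, so that $\eta_0\in\bS$ and $E\in\SPI$. I would first observe that $\eta_0$ is a generator, then extract a minimal generator out of the generators of $N$, and finally identify a minimal generator with an $S$-pure injective envelope of $N$.

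That $\eta_0$ is a generator is immediate from the injectivity defining $\SPI$: given any $S$-pure extension $0\rt N\rt M'\rt L'\rt 0$, the inclusion $N\rt M'$ is an $\bS$-monomorphism, so the map $N\rt E$ extends to some $g\colon M'\rt E$ that is the identity on $N$; since $g$ preserves $N=\Ker(E\rt L)$ it induces $f\colon L'\rt L$ and hence the commutative diagram required by the definition of a generator.

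Next I would produce a minimal generator. Order the generators of $N$ by declaring $\eta\preceq\eta'$ whenever there is a morphism of $S$-pure extensions $\eta'\rt\eta$ that restricts to the identity on $N$, and seek a minimal element. A chain $(\eta_i)$ has a lower bound: the direct limit $\lim_{\rightarrow}\eta_i$ lies in $\bS$ by Lemma~\ref{SsI closed under Limit}, it receives the colimit maps $\eta_i\rt\lim_{\rightarrow}\eta_i$ fixing $N$ (so it lies below every $\eta_i$), and it is again a generator because any $S$-pure extension first maps into some $\eta_i$ and then into the limit. To stay within a set I would restrict at the outset to a fixed representative family of generators of cardinality bounded in terms of $|E|$ and $|R|$; Zorn's Lemma then supplies a minimal generator $\eta\colon 0\rt N\rt\overline{E}\rt\overline{L}\rt 0$.

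It remains to see that a minimal generator is an envelope. Since $\eta_0$ is a generator there is $k\colon E\rt\overline{E}$ fixing $N$, and since $\eta$ is a generator there is $h\colon\overline{E}\rt E$ fixing $N$; then $kh\colon\overline{E}\rt\overline{E}$ fixes $N$ and, by minimality of $\eta$, is an automorphism. Hence $h$ is a split monomorphism and $\overline{E}$ is a direct summand of $E$, so $\overline{E}\in\SPI$ because $\SPI$ is closed under direct summands. Thus $N\rt\overline{E}$ is an $S$-pure injective preenvelope, and minimality says precisely that every endomorphism of $\overline{E}$ fixing $N$ is an automorphism, which is the defining minimality of an envelope. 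The main obstacle is the third paragraph: one must pin down the cardinality bound that makes the family of generators a set while ensuring that the direct limit of a chain stays inside this set (this is where Lemma~\ref{SsI closed under Limit} is essential); granting the set-theoretic bookkeeping, the remaining steps are formal.
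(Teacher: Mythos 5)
Your overall plan is the one the paper intends --- its entire proof is ``Lemma \ref{SsI closed under Limit} plus the argument of Theorem 2.3.8 of \cite{Xu}'', i.e.\ preenvelope, direct limits, Zorn, minimal generator --- and your second and fourth paragraphs are essentially sound (in the fourth you swap the justifications: $k\colon E\rt\overline{E}$ exists because $\eta$ is a generator, $h\colon\overline{E}\rt E$ because $\eta_0$ is). But the Zorn step in your third paragraph has a genuine gap: the preorder you define is degenerate. You set $\eta\preceq\eta'$ when there exists a morphism of $S$-pure extensions $\eta'\rt\eta$ fixing $N$. By the very definition of a generator, \emph{every} $S$-pure extension of $N$ admits such a morphism into every generator; in particular any two generators map into each other, so all generators are $\preceq$-equivalent and every generator --- including $\eta_0$ itself --- is already a minimal element. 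Zorn's lemma is vacuous in this poset and cannot single anything out.

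This matters because your last paragraph uses ``minimality'' in the paper's sense of a minimal generator, namely that every endomorphism of $\overline{E}$ fixing $N$ is an automorphism; that property is strictly stronger than $\preceq$-minimality and is never established. Concretely: if $N$ is itself $S$-pure injective but the preenvelope $E$ of Proposition \ref{SsI is PreEnveloping} properly contains the image of $N$, then $i\colon N\rt E$ splits, say $ri=1_N$, and the idempotent $ir\colon E\rt E$ is an endomorphism of $\eta_0$ fixing $N$ that is not an automorphism; so $\eta_0$ is $\preceq$-minimal yet not a minimal generator, and your argument could simply return $\eta_0$. (A secondary defect: a chain in your preorder comes with no chosen compatible connecting morphisms, so ``the direct limit of a chain'' is not defined, in particular at limit ordinals.) The real content of Xu's Theorem 2.3.8, which the paper outsources, is a Zorn argument over a rigid poset where extremality has force --- for instance the set of submodules $K\subseteq E$ with $K\cap N=0$ such that the quotient extension $0\rt N\rt E/K\rt L'\rt 0$ is still $S$-pure (any such quotient of $\eta_0$ is automatically a generator), ordered by inclusion; there Lemma \ref{SsI closed under Limit} shows unions of chains remain admissible, and maximality of $K$ is what forces endomorphisms fixing $N$ to be monomorphisms, leading to the envelope property. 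Without replacing your ordering by something of this kind, the key minimality property you invoke is unsupported and the proof does not go through.
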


\begin{proof}
By Lemma \ref{SsI closed under Limit}, the class of $S$-pure extensions of $N$  is closed under direct limits. Hence by a similar argument as in the proof of Theorem 2.3.8 of \cite{Xu}, we get the result.
\end{proof}

\begin{remark}\label{LeftOrthogonalSsI is Flat}
By \cite[Lemma 3.4.1]{Xu}, for any ring $R$, ${}^{\perp}{(\PInj)} = \Flat$, see also \cite[Proposition 4]{H3}. Since $\PInj \subseteq \SPI$, ${}^{\perp}{(\SPI)}\subseteq {}^{\perp}{(\PInj)}=\Flat$. We use this fact to prove the next proposition.
\end{remark}

\begin{proposition}\label{LeftOrthogonalOfS-pureInjective}
Let $(R,S)$ be a pair satisfying $(\OC)$. Then $S\mbox{-}{\rm{SF}}={}^{\perp}{(\SPI)}$.
\end{proposition}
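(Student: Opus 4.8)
The plan is to prove the two inclusions separately, noting at the outset that $\SSF\subseteq{}^{\perp}(\SPI)$ is essentially formal. By Remark \ref{SsI is Swc} every $S$-pure injective module is $S$-weakly cotorsion, so $\SPI\subseteq\SWC$; since $\SSF={}^{\perp}\SWC$ by Facts \ref{Facts}$(iii)$, enlarging the class of test modules from $\SPI$ to $\SWC$ can only shrink the left orthogonal, whence $\SSF={}^{\perp}\SWC\subseteq{}^{\perp}(\SPI)$. This direction uses neither $(\OC)$ nor flatness.

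For the reverse inclusion ${}^{\perp}(\SPI)\subseteq\SSF$, I would take $M\in{}^{\perp}(\SPI)$ and first record that $M$ is flat: by Remark \ref{LeftOrthogonalSsI is Flat} we have ${}^{\perp}(\SPI)\subseteq{}^{\perp}(\PInj)=\Flat$. The key observation, which is where the maps $\Hom_R(R_S,-)$ and $\Hom_R(R/sR,-)$ enter, is that every $R_S$-module and every $R/sR$-module is $S$-pure injective when regarded as an $R$-module. Indeed, for an $R_S$-module $X$, evaluation at $1$ gives an $R$-linear isomorphism $\Hom_R(R_S,X)\cong X$ (an $R$-linear map $R_S\rt X$ is automatically $R_S$-linear, since $S$ acts invertibly on $X$), and $\Hom_R(R_S,X)$ is $S$-pure injective by the remark following the definition of $S$-pure injectivity; likewise $\Hom_R(R/sR,Y)\cong Y$ for an $R/sR$-module $Y$, so $Y$ is $S$-pure injective. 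Consequently $\Ext^1_R(M,X)=0$ for every $R_S$-module $X$ and $\Ext^1_R(M,Y)=0$ for every $R/sR$-module $Y$ and every $s\in S$.

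The remaining task is to transfer these vanishing statements along the ring maps $R\rt R_S$ and $R\rt R/sR$ to obtain local projectivity. Since $M$ is flat, $\Tor^R_i(R_S,M)=0=\Tor^R_i(R/sR,M)$ for all $i\geq 1$, so applying $R_S\otimes_R-$ (resp. $R/sR\otimes_R-$) to a free resolution $P_\bullet\rt M$ produces a free resolution of $M_S$ over $R_S$ (resp. of $M/sM$ over $R/sR$). Combined with the adjunction isomorphism $\Hom_{R_S}(R_S\otimes_R P_\bullet,X)\cong\Hom_R(P_\bullet,X)$ (and its $R/sR$ analogue), this yields the base-change identifications $\Ext^1_{R_S}(M_S,X)\cong\Ext^1_R(M,X)$ and $\Ext^1_{R/sR}(M/sM,Y)\cong\Ext^1_R(M,Y)$. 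By the previous paragraph both right-hand sides vanish for all $R_S$-modules $X$ and all $R/sR$-modules $Y$, so $M_S$ is projective over $R_S$ and $M/sM$ is projective over $R/sR$ for every $s\in S$.

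Finally, $M$ is a flat $R$-module whose localization $M_S$ is projective over $R_S$ and whose quotients $M/sM$ are projective over $R/sR$ for every $s\in S$; invoking $(\OC)$ for the pair $(R,S)$ (Convention \ref{Conv-2}) gives that $M$ is $S$-strongly flat, which completes the reverse inclusion. I expect the crux to be the identification of $R_S$- and $R/sR$-modules as $S$-pure injective $R$-modules, since this is what converts the orthogonality hypothesis into the local $\Ext$-vanishing that feeds $(\OC)$; the base-change isomorphisms are routine once the flatness of $M$ secures the required vanishing of $\Tor$.
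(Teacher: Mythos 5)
Your proof is correct, but your reverse inclusion takes a genuinely different route from the paper's. The paper, following Proposition 4 of \cite{H3}, fixes a projective presentation $0 \rt K \rt P \rt M \rt 0$, pushes it out along the $S$-pure injective envelope $e\colon K \rt \SPE(K)$ (whose existence, Proposition \ref{SsI is Enveloping}, is a genuine input), uses $\Ext^1_R(M, \SPE(K))=0$ to split the pushout, and then pulls the resulting map $g\colon P \rt \SPE(K)$ back to conclude that the presentation itself lies in $\bS$; localizing and reducing modulo $s$ then exhibits $M_S$ and $M/sM$ as direct summands of the projectives $P_S$ and $P/sP$. You instead observe that every $R_S$-module $X$ and every $R/sR$-module $Y$ is itself $S$-pure injective as an $R$-module, via the identifications $X \cong \Hom_R(R_S, X)$ and $Y \cong \Hom_R(R/sR, Y)$ (both of which you justify correctly) combined with the adjunction remark following the definition of $S$-pure injectivity; the hypothesis $M \in {}^{\perp}(\SPI)$ then directly gives $\Ext^1_R(M,X)=0=\Ext^1_R(M,Y)$, and the flat base change isomorphisms $\Ext^1_{R_S}(M_S, X)\cong\Ext^1_R(M,X)$ and $\Ext^1_{R/sR}(M/sM, Y)\cong\Ext^1_R(M,Y)$ (valid since flatness of $M$ kills the relevant Tor modules) yield projectivity of $M_S$ and $M/sM$ over $R_S$ and $R/sR$. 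The forward inclusion and the endgame are identical in both proofs: $\SPI\subseteq\SWC$ gives $\SSF\subseteq{}^{\perp}(\SPI)$, flatness of $M$ comes from Remark \ref{LeftOrthogonalSsI is Flat}, and $(\OC)$ closes the argument. What your route buys is economy of prerequisites: it bypasses $S$-pure injective envelopes entirely, needing only the Hom-tensor adjunction and standard homological base change. What the paper's route buys is reusability: its pushout--pullback mechanism is precisely the argument recycled later in the ideal setting (at the end of Proposition \ref{S-phantomIsOrthogonalOfSpureinj} and in Theorem \ref{Th: S-Phantom Precover}), where your module-theoretic identification of the orthogonal class has no direct morphism-level analogue.
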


\begin{proof}
By Remark \ref{SsI is Swc}, we have $\SSF\subseteq {}^{\perp}{(\SPI)}$. The reverse inclusion can be proved using a similar argument as in Proposition 4 of \cite{H3}. For the reader's convenience, we present the proof. Let $M\in {}^{\perp}{(\SPI)}$. Consider the pushout diagram
  \[\begin{tikzcd}
\eta: &	0 \rar  &K\rar{i}\dar{e}&P\rar\dar&M\dar[equals]\rar& 0\\
\eta':&	0\rar & \SPE(K)\rar& U\rar & M \rar & 0
	\end{tikzcd}\]
where $e: K\rt \SPE(K)$ is an $S$-pure injective envelope, which exists by Proposition \ref{SsI is Enveloping}. Since $\eta'$ splits, there exists a morphism $g: P\rt \SPE(K)$ such that $gi=e$. Therefore we can construct the following pullback diagram.
\[\begin{tikzcd}
 &	0 \rar  &K\rar{i}\dar[equals]&P\rar\dar{g}&M\dar\rar& 0\\
&	0\rar&K\rar{e} &\SPE(K)\rar& Z\rar & 0
	\end{tikzcd}\]
Since the bottom row is in the class $\bS$ and $\bS$ is closed under pullbacks, the top row also belongs to $\bS$.
Hence the short exact sequences
\[0\rt K_S\rt P_S\rt M_S\rt 0 \ \ {\rm and} \ \ 0\rt K/sK\rt P/sP\rt  M/sM\rt 0,\]
for every $s\in S$, are split. So $M_S$ and $M/sM$ are projectives. Moreover, by Remark \ref{LeftOrthogonalSsI is Flat}, $M$ is flat. Hence the result follows since $(\OC)$ holds for $(R, S)$.
\end{proof}

\begin{proposition}\label{Proposition 17 of H1}
Let $(\OC)$ holds for the pair $(R,S)$. The following statements are equivalent.
\begin{itemize}
\item[$(1)$] The $S$-pure injective  envelope of every $S$-strongly flat module $G$ is $S$-strongly flat.
\item[$(2)$] The $S$-weakly cotorsion envelope every $S$-strongly flat module $G$ is $S$-pure injective.
\item[$(3)$] Every $S$-strongly flat and $S$-weakly cotorsion module is $S$-pure injective.
\end{itemize}
\end{proposition}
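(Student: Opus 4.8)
The plan is to establish the cyclic chain $(1)\Rightarrow(3)\Rightarrow(2)\Rightarrow(1)$. Throughout I would lean on four facts already in place: $\SWC$ is enveloping (Facts~\ref{Facts}(iv)) and $\SPI$ is enveloping (Proposition~\ref{SsI is Enveloping}); $\SPI\subseteq\SWC$ (Remark~\ref{SsI is Swc}); $\SSF$ is the left-hand class of the complete cotorsion pair $(\SSF,\SWC)$, hence closed under extensions and direct summands; and the observation recorded in the introduction that any short exact sequence whose right-hand term lies in $\SSF$ is automatically $S$-pure exact, since such a cokernel is flat and has projective localization and projective reductions modulo each $s\in S$. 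The hypothesis $(\OC)$ will be used essentially only in the implication $(1)\Rightarrow(3)$.

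For $(3)\Rightarrow(2)$ I would take an $S$-strongly flat module $G$ together with its $\SWC$-envelope $0\rt G\rt W\rt G'\rt 0$. Since $\SWC$ is closed under extensions, Wakamatsu's lemma gives $G'\in{}^{\perp}\SWC=\SSF$, and closure of $\SSF$ under extensions then yields $W\in\SSF$. Thus $W$ is both $S$-strongly flat and $S$-weakly cotorsion, so $(3)$ makes $W$ an $S$-pure injective module, which is exactly the assertion $(2)$. For $(2)\Rightarrow(1)$ I would again form the $\SWC$-envelope $0\rt G\rt W\rt G'\rt 0$ of an $S$-strongly flat $G$; hypothesis $(2)$ makes $W$ an $S$-pure injective, while $G'\in\SSF$ makes the sequence $S$-pure exact. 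Because every $S$-pure injective module is injective with respect to this very sequence, the monomorphism $G\rt W$ is an $S$-pure injective preenvelope, and hence the envelope $\SPE(G)$ is a direct summand of $W$. As $W\in\SSF$ and $\SSF$ is closed under direct summands, $\SPE(G)$ is $S$-strongly flat, giving $(1)$.

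The core is $(1)\Rightarrow(3)$. Given a module $M$ that is simultaneously $S$-strongly flat and $S$-weakly cotorsion, I would look at its $S$-pure injective envelope $0\rt M\rt \SPE(M)\rt L\rt 0$, which is $S$-pure exact by construction, and invoke $(1)$ to get $\SPE(M)\in\SSF$. The decisive step is to show $L\in\SSF$ by verifying the three hypotheses of $(\OC)$: the module $L$ is flat because it is a pure quotient of the flat module $\SPE(M)$; and since the defining sequence is $S$-pure, applying $-\otimes_R R_S$ and $-\otimes_R R/sR$ produces split sequences that realize $L_S$ and $L/sL$ as direct summands of the projective modules $\SPE(M)_S$ and $\SPE(M)/s\SPE(M)$, so these are projective. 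With $L\in\SSF$ and $M\in\SWC$ we obtain $\Ext^1_R(L,M)=0$, so the envelope sequence splits; left minimality of the envelope then forces the splitting retraction to be an isomorphism, i.e. $M\cong\SPE(M)$, proving $M$ is $S$-pure injective.

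I expect the main obstacle to be precisely the $(\OC)$ step in $(1)\Rightarrow(3)$: one must check all three conditions for $L$ at once and, in particular, notice that its flatness comes from the ordinary purity of the envelope sequence while the projectivity of $L_S$ and of $L/sL$ comes from the extra splitting built into $S$-purity. Once $L$ is recognized as $S$-strongly flat, the collapse of the envelope is a formal consequence of cotorsion-pair orthogonality together with left minimality, and the remaining implications $(3)\Rightarrow(2)\Rightarrow(1)$ are comparatively routine.
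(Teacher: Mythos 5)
Your proof is correct, and while it relies on the same two essential ingredients as the paper --- the $(\OC)$ computation on the cokernel of an $S$-pure injective envelope, and Wakamatsu's lemma for $\SWC$-envelopes --- it is organized along a genuinely different route. The paper proves $(1)\Rightarrow(2)$ and $(2)\Rightarrow(1)$ directly and declares $(2)\Leftrightarrow(3)$ immediate, whereas you run the cycle $(1)\Rightarrow(3)\Rightarrow(2)\Rightarrow(1)$. The shared core is identical in both: for the envelope sequence $0\rt G\rt \SPE(G)\rt L\rt 0$ of an $S$-strongly flat $G$, hypothesis $(1)$ plus the splitting built into $S$-purity gives $L$ flat with $L_S$ and $L/sL$ projective, so $(\OC)$ yields $L\in\SSF$. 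The endgames differ, however: the paper, heading for $(2)$, feeds this into \cite[Proposition 2.1.4]{Xu} to recognize $G\rt\SPE(G)$ as a $\SWC$-preenvelope, so that the $\SWC$-envelope becomes a direct summand of the $S$-pure injective $\SPE(G)$; you instead head for $(3)$, where the extra hypothesis $M\in\SWC$ lets you split the envelope sequence outright from $\Ext^1_R(L,M)=0$, bypassing Xu's preenvelope-recognition lemma entirely. Your $(2)\Rightarrow(1)$ also makes explicit a step the paper leaves tacit: its proof stops at ``$C$ is $S$-strongly flat,'' while you observe that the Wakamatsu sequence is $S$-pure exact (its cokernel lies in $\SSF$), hence $G\rt W$ is an $S$-pure injective preenvelope and $\SPE(G)$ is a direct summand of $W\in\SSF$. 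The trade-off is that the paper's proof is shorter by citing results from \cite{Xu}, while yours is more self-contained and spells out the summand arguments. One small simplification available to you: in $(1)\Rightarrow(3)$, once the envelope sequence splits you need not invoke minimality at all, since $M$ is then a direct summand of $\SPE(M)$ and injectivity relative to $\bS$ passes to direct summands.
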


\begin{proof}
$(1)\Rightarrow (2).$ Let $G$ be an $S$-strongly flat module. Let $\eta: 0\rt G\rt \SPE(G)\rt L\rt 0$ be the $S$-pure injective envelope of $G$. Since $\eta$ in $\bS$, we have the following two split short exact sequences.
\[0\rt G_S\rt \SPE(G)_S\rt L_S\rt 0,\]
\[0\rt G/sG\rt \SPE(G)/s\SPE(G)\rt L/sL\rt 0.\]
By assumption $\SPE(G)$ is $S$-strongly flat. Hence, by \ref{Facts}, $\SPE(G)_S$ and $\SPE(G)/s\SPE(G)$, for every $s\in S$, are projectives. Therefore $L_S$ and $L/sL$ are projectives. By Optimistic Conjecture, we get $L$ is $S$-strongly flat. Since $\SPE(G)$ is $S$-weakly cotorsion and $L$ is $S$-strongly flat, by \cite[Proposition 2.1.4]{Xu}, we get $\eta$ is $S$-weakly cotorsion preenvelope. Therefore, since the $S$-weakly cotorsion envelope is a direct summand of $\SPE(G)$, it is $S$-pure injective.

$(2)\Rightarrow (1).$ Let $G$ be an $S$-strongly flat module and $\varphi: G\rt C$ be the $S$-weakly cotorsion envelope of $G$. By Theorem 3.4.2 of \cite{Xu},  $\Ext^1_R(\Coker\varphi, \SWC)=0$. Hence $\Coker\varphi$ is $S$-strongly flat. Therefore $C$ is $S$-strongly flat.

$(2)\Leftrightarrow (3).$ Follow immediately.
\end{proof}

\subsection{Closedness under extension}\label{Subsec: ClosednessUnder Ext}
In \cite[\S 3.3.5]{Xu} the situation is studied where the class of pure injective modules is closed under extensions. It is shown that this is the case if and only if every cotorsion module is pure injective. Moreover, in \cite{HR} the authors characterize rings over which every cotorsion module is pure injective.
In this subsection, we show that the class of $S$-pure injective modules is closed under extensions if and only if every $S$-weakly cotorsion module is $S$-pure injective.

\begin{lemma}\label{Lemma3.5.2 of Xu}
Let the class of $S$-pure injective modules be closed under extensions. Consider the commutative diagram
\[\begin{tikzcd}
 	 &  & M\rar{i}\dar{p}& \SPE(M)\dlar[dotted][near start]{u}\dar{q} \\
 	0\rar&X\rar &Y\rar{\beta}& Z\rar & 0
	\end{tikzcd}\]
where $X$ is an $S$-pure injective module and $\SPE(M)$ is the $S$-pure injective envelope of $M$. Then, there exists a morphism $u: \SPE(M)\rt Y$ such that $ui=p$ and $\beta u=q$.
\end{lemma}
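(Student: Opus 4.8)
The plan is to package the lifting problem as the task of splitting a pullback sequence, and then to use closedness under extensions together with the minimality of the envelope.

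First I would form the pullback $P = Y \times_Z \SPE(M)$ of $\beta$ along $q$, with projections $\rho: P \to Y$ and $\pi: P \to \SPE(M)$. Since $\beta$ is epic with kernel $X$, so is $\pi$, yielding a short exact sequence
\[\epsilon: \ 0 \to X \to P \xrightarrow{\pi} \SPE(M) \to 0.\]
The commutativity $\beta p = q i$ lets the universal property of the pullback produce a morphism $\tilde p: M \to P$ with $\rho \tilde p = p$ and $\pi \tilde p = i$. It then suffices to find a section $s: \SPE(M) \to P$ of $\pi$ satisfying $s i = \tilde p$, since $u := \rho s$ would then give $\beta u = q \pi s = q$ and $u i = \rho \tilde p = p$.

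Next I would show that $P$ is $S$-pure injective. By assumption $X$ is $S$-pure injective, and $\SPE(M)$ is $S$-pure injective, so $\epsilon$ exhibits $P$ as an extension of two $S$-pure injective modules; by the standing hypothesis of the lemma that the class of $S$-pure injective modules is closed under extensions, $P$ is $S$-pure injective. Since the envelope map $i: M \to \SPE(M)$ is an $S$-pure monomorphism (its defining sequence lies in $\bS$), the $S$-pure injectivity of $P$ allows me to extend $\tilde p$ along $i$ to a morphism $v: \SPE(M) \to P$ with $v i = \tilde p$.

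The remaining, and most delicate, step is that $v$ need not be a section of $\pi$. However $\pi v \cdot i = \pi \tilde p = i$, so the endomorphism $\pi v$ of $\SPE(M)$ agrees with the identity on the image of $i$; by the minimality property defining the $S$-pure injective envelope, any endomorphism of $\SPE(M)$ restricting to $i$ along $i$ is an automorphism, so $\pi v$ is invertible. I would then set $s := v (\pi v)^{-1}$, so that $\pi s = 1_{\SPE(M)}$. Applying $(\pi v)^{-1}$ to the identity $\pi v \cdot i = i$ yields $(\pi v)^{-1} i = i$, whence $s i = v (\pi v)^{-1} i = v i = \tilde p$; thus $s$ is the required section and $u := \rho s$ solves the problem. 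I expect this final correction to be the crux: the naive extension $v$ already handles the triangle $v i = \tilde p$ but generally fails the section condition $\pi s = 1_{\SPE(M)}$, and it is precisely the minimality of the envelope that repairs $\pi v$ into an automorphism without disturbing the compatibility $s i = \tilde p$.
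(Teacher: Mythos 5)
Your proof is correct and follows essentially the same route as the paper's: form the pullback of $\beta$ along $q$, use closedness under extensions to conclude the pullback module is $S$-pure injective, extend the induced map from $M$ along the envelope $i$, invoke minimality of the envelope to make the composite with the projection an automorphism, and correct by its inverse. Your $P$, $\rho$, $\pi$, $\tilde p$, $v$, and $u=\rho v(\pi v)^{-1}$ are exactly the paper's $W$, $w$, $v$, $\phi$, $\psi$, and $u=w\psi(v\psi)^{-1}$.
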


\begin{proof}
  Consider the following pullback diagram
   \[\begin{tikzcd}
 	0 \rar  &X\rar{h}\dar[equals]&W\rar{v}\dar{w}&\SPE(M)\dar{q}\rar& 0\\
	0\rar&X\rar &Y\rar& Z\rar & 0.
	\end{tikzcd}\]
  Since $X$ and $\SPE(M)$ are $S$-pure injective, so by the assumption we get $W$ is also $S$-pure injective. By the property  of the pullback diagram, there exists a unique morphism
   $\phi: M\rt W$ such that $i=v\phi$ and $p= w \phi$. Now since $W$ is $S$-pure injective, by the definition, there exists a morphism $\psi: \SPE(M)\rt W$ such that $\phi=\psi i$.
   Therefore $i=v\phi=v\psi i$. Since $i$ is the $S$-pure injective envelope of $M$, $v\psi$ is an automorphism. Set $u=w\psi(v\psi)^{-1}$. It's easy to verify that $\beta u=q$ and $ui=p$.
\end{proof}

\begin{lemma}\label{Lemma 3.5.3 of Xu}
Let the class of $S$-pure injective modules be closed under extensions. Let $M$ be an $R$-module and set $F:=\SPE(M)/M$. Consider the diagram
 \[\begin{tikzcd}
 	& & &F\dar{g}\dlar[dotted]&\\
	0\rar&X\rar &Y\rar{\beta}& Z\rar & 0,
	\end{tikzcd}\]
where $X$ is an $S$-pure injective module. Then there exists morphism $u':F\rt Y$ such that $\beta u'=g$.
\end{lemma}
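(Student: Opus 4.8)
The plan is to deduce this lifting statement directly from the previous lemma (Lemma \ref{Lemma3.5.2 of Xu}) by feeding it the right auxiliary data. I would write the defining sequence of $F$ as
\[0 \rt M \st{i}{\rt} \SPE(M) \st{\pi}{\rt} F \rt 0,\]
where $i$ is the $S$-pure inclusion into the envelope and $\pi$ is the canonical projection, so that in particular $\pi i = 0$. First I would set $q := g\pi : \SPE(M) \rt Z$ and $p := 0 : M \rt Y$, and check that these fit into the square demanded by Lemma \ref{Lemma3.5.2 of Xu}:
\[\begin{tikzcd}
M \rar{i} \dar{p}& \SPE(M) \dar{q}\\
Y \rar{\beta}& Z
\end{tikzcd}\]
This commutes because $\beta p = 0 = g\pi i = qi$, using $\pi i = 0$. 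Thus the hypotheses of Lemma \ref{Lemma3.5.2 of Xu} are in place (note that $X$ is $S$-pure injective by assumption).

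With that set up, Lemma \ref{Lemma3.5.2 of Xu} produces a morphism $u : \SPE(M) \rt Y$ satisfying $ui = p = 0$ and $\beta u = q = g\pi$. The crucial observation is that $ui = 0$ means $u$ vanishes on $\im i = M$, so by the universal property of the cokernel $\pi$ there is a unique $u' : F \rt Y$ with $u'\pi = u$. Then $\beta u' \pi = \beta u = g\pi$, and cancelling the epimorphism $\pi$ gives $\beta u' = g$, which is exactly the asserted lift.

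I do not expect a genuine obstacle here: the statement is a formal corollary of Lemma \ref{Lemma3.5.2 of Xu} together with the factorization of a map through a cokernel, and the closedness of $\SPI$ under extensions enters only indirectly, through the invocation of that lemma. The single thing to get right is the choice of auxiliary maps, namely the trivial lift $p = 0$ and the composite $q = g\pi$; once these are recognized, the remaining steps are a short and routine diagram chase.
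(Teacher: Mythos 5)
Your proposal is correct and matches the paper's own proof: the paper likewise applies Lemma \ref{Lemma3.5.2 of Xu} to the square with vertical maps $0: M \rt Y$ and $g j: \SPE(M) \rt Z$ (your $g\pi$), obtains $u$ with $ui = 0$ and $\beta u = gj$, and then defines $u'$ on $F$ by factoring $u$ through the quotient. The only cosmetic difference is that the paper constructs $u'$ element-wise ($d \mapsto u(x)$ with $j(x)=d$) where you invoke the universal property of the cokernel.
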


\begin{proof}
Consider the following diagram
\[\begin{tikzcd}
 	 & 0\rar & M\rar{i}\dar{0}& \SPE(M)\dlar[dotted][near start]{u}\dar{gj}\rar{j}& F\dlar{g}\rar&0 \\
 	0\rar&X\rar &Y\rar{\beta}& Z\rar & 0.
	\end{tikzcd}\]
By  Lemma \ref{Lemma3.5.2 of Xu}, there exists morphism $u:\SPE(M)\rt Y$ such that $\beta  u=gj$. Now we define $u': F\rt Y$, $d\longmapsto u(x)$ where $j(x)=d$. It is clear that $g=\beta u'$.
\end{proof}

\begin{lemma}\label{Lemma 3.5.4 of Xu}
Let the class of $S$-pure injective modules be closed under extensions. Let $M$ be an $R$-module and set $F:=\SPE(M)/M$. Then $\Ext^1_R(F, X)=0$, for any $S$-pure injective module $X$. Moreover, if $(\OC)$ holds for the pair $(R,S)$, then $F$ is $S$-strongly flat.
\end{lemma}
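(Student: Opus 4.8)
The plan is to prove the two assertions in order, letting the vanishing of $\Ext^1_R(F, X)$ carry all the weight, after which the $S$-strong flatness follows purely formally.

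First I would establish $\Ext^1_R(F, X)=0$ for an arbitrary $S$-pure injective module $X$ by showing that every extension splits. Represent an arbitrary class in $\Ext^1_R(F,X)$ by a short exact sequence
\[\eta: 0 \rt X \rt Y \st{\beta}\rt F \rt 0.\]
Here the left-hand term $X$ is $S$-pure injective, so taking $Z=F$ and $g=1_F$ in Lemma \ref{Lemma 3.5.3 of Xu}, the diagram
\[\begin{tikzcd}
 	& & &F\dar{1_F}\dlar[dotted]&\\
	0\rar&X\rar &Y\rar{\beta}& F\rar & 0
	\end{tikzcd}\]
satisfies the hypotheses of that lemma. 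It therefore yields a morphism $u': F\rt Y$ with $\beta u'=1_F$, which is exactly a section of $\beta$. Hence $\eta$ splits; since the extension class was arbitrary, $\Ext^1_R(F, X)=0$.

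For the second assertion, the vanishing just proved says precisely that $F\in{}^{\perp}{(\SPI)}$, the left orthogonal of the class of $S$-pure injective modules. When $(\OC)$ holds for the pair $(R,S)$, Proposition \ref{LeftOrthogonalOfS-pureInjective} identifies this orthogonal as $\SSF={}^{\perp}{(\SPI)}$. Consequently $F\in\SSF$, i.e. $F$ is $S$-strongly flat, as claimed.

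I expect no serious obstacle: both steps are short. The only substantive input is the lifting Lemma \ref{Lemma 3.5.3 of Xu}, whose proof has already absorbed the closure-under-extensions hypothesis through Lemma \ref{Lemma3.5.2 of Xu}; granted that, the splitting is immediate and the identification with $\SSF$ is a direct appeal to Proposition \ref{LeftOrthogonalOfS-pureInjective}.
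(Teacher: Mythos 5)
Your proposal is correct and follows essentially the same route as the paper: an arbitrary extension of an $S$-pure injective $X$ by $F$ is split by applying Lemma \ref{Lemma 3.5.3 of Xu} with $g=1_F$ (legitimate since $F=\SPE(M)/M$ matches that lemma's hypothesis), and then $(\OC)$ together with Proposition \ref{LeftOrthogonalOfS-pureInjective} identifies $F\in{}^{\perp}(\SPI)=\SSF$. No gaps.
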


\begin{proof}
Let $\eta: 0\rt X\rt Y\rt F\rt 0$ be an extension in $\Ext^1_R(F, X)$. Consider the following diagram
\[\begin{tikzcd}
 	& & &F\dar{1_F}\dlar[dotted][near start]{u'}&\\
	0\rar&X\rar &Y\rar{\beta}& F\rar & 0.
	\end{tikzcd}\]
By Lemma \ref{Lemma 3.5.3 of Xu}, there exists $u': F\rt Y$ such that $\beta u'=1_F$. Hence the extension $\eta$ is split.  If $(\OC)$ holds for the pair $(R,S)$,  Proposition \ref{LeftOrthogonalOfS-pureInjective} implies that  $F$ is $S$-strongly flat.
\end{proof}

The following theorem is inspired by a well-known classification of Xu rings, as shown, for example, in \cite[Theorem 3.5.1]{Xu}.

\begin{theorem}\label{Theorem 3.5.1 of Xu}Let $(\OC)$ holds for the pair $(R,S)$. The following statements are equivalent.
\begin{itemize}
\item[$(1)$] The class of $S$-pure injective modules is closed under extensions.
\item[$(2)$] For any $R$-module $M$, $\SPE(M)/M$ is $S$-strongly flat.
\item[$(3)$] Every $S$-weakly cotorsion module is $S$-pure injective.
\end{itemize}
\end{theorem}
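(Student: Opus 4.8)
The plan is to establish the cycle of implications $(1)\Rightarrow(2)\Rightarrow(3)\Rightarrow(1)$, leaning on the machinery already assembled in Lemmas \ref{Lemma3.5.2 of Xu}--\ref{Lemma 3.5.4 of Xu} together with the cotorsion-pair bookkeeping recorded in Facts \ref{Facts}. Since all of the genuine homological content has been front-loaded into the lifting lemmas, the theorem itself should reduce to a relatively routine assembly of standard facts.

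For $(1)\Rightarrow(2)$ there is essentially nothing left to do: assuming the class $\SPI$ is closed under extensions, I would apply Lemma \ref{Lemma 3.5.4 of Xu} to an arbitrary module $M$, which, because $(\OC)$ holds for $(R,S)$, yields directly that $F=\SPE(M)/M$ is $S$-strongly flat.

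For $(2)\Rightarrow(3)$, I would take an arbitrary $S$-weakly cotorsion module $C$ and consider the defining sequence of its $S$-pure injective envelope,
\[0\rt C\rt \SPE(C)\rt F\rt 0, \qquad F:=\SPE(C)/C.\]
By hypothesis $(2)$, $F$ is $S$-strongly flat. Since $(\SSF,\SWC)$ is a complete cotorsion pair with $\SSF^{\perp}=\SWC$ (Facts \ref{Facts}(iii)), membership $C\in\SWC$ forces $\Ext^1_R(F,C)=0$, so the sequence splits and $C$ becomes a direct summand of $\SPE(C)$. Invoking that injectivity relative to the subfunctor $\Ext_\bS$ is inherited by direct summands then gives that $C$ is itself $S$-pure injective. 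For $(3)\Rightarrow(1)$, I would start from a short exact sequence $0\rt A\rt B\rt C\rt 0$ with $A,C\in\SPI$. By Remark \ref{SsI is Swc}, both $A$ and $C$ are $S$-weakly cotorsion, and because $\SWC=\SSF^{\perp}$ is a right orthogonal class it is closed under extensions: for every $G\in\SSF$ the portion $\Ext^1_R(G,A)\rt \Ext^1_R(G,B)\rt \Ext^1_R(G,C)$ of the long exact sequence has vanishing outer terms, so $\Ext^1_R(G,B)=0$ and $B\in\SWC$. Hypothesis $(3)$ then makes $B$ an $S$-pure injective module, closing the cycle.

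The two points that will require a word of justification rather than a mere citation are the closure of $\SPI$ under direct summands used in $(2)\Rightarrow(3)$ and the closure of $\SWC$ under extensions used in $(3)\Rightarrow(1)$. I expect the former to be the subtler of the two: whereas extension-closure of $\SWC$ follows immediately from the $\Ext$-orthogonality description $\SSF^{\perp}=\SWC$, the summand-closure of $\SPI$ must be verified by unwinding the definition of injectivity with respect to the bifunctor $\Ext_\bS$—lifting a map $A\rt C$ along an $S$-pure exact sequence by composing with the inclusion into and projection out of a containing $S$-pure injective module. This is standard, but it is the one spot where one must argue from the definition of $S$-pure injectivity directly rather than appealing to the preceding lemmas.
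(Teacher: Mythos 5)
Your proposal is correct and follows essentially the same route as the paper: the cycle $(1)\Rightarrow(2)\Rightarrow(3)\Rightarrow(1)$, with $(1)\Rightarrow(2)$ delegated to Lemma \ref{Lemma 3.5.4 of Xu}, $(2)\Rightarrow(3)$ obtained by splitting the envelope sequence via $\Ext^1_R(\SPE(C)/C,C)=0$ from the cotorsion pair $(\SSF,\SWC)$, and $(3)\Rightarrow(1)$ via Remark \ref{SsI is Swc} and extension-closure of $\SWC$. Your explicit attention to summand-closure of $\SPI$ is a point the paper leaves implicit, but it does not change the argument.
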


\begin{proof}
$(1)\Rightarrow (2).$ Follows by the previous three lemmas.

$(2)\Rightarrow (3).$ Let $C$ be an $S$-weakly cotorsion module and   \[\eta: \ 0\rt C\rt \SPE(C)\rt \SPE(C)/C\rt 0\] be the $S$-pure injective envelope of $C$. Since $\SPE(C)/C$ is $S$-strongly flat, the short exact sequence $\eta$ is split exact. Hence $C$ is $S$-pure injective.

$(3)\Rightarrow (1).$ Let $0\rt X'\rt X\rt X''\rt 0$ be a short exact sequence with $X'$ and $X''$, $S$-pure injective. We  show that $X$ is $S$-pure injective. By Remark \ref{SsI is Swc}, $X'$ and $X''$ are $S$-weakly cotorsion $R$-modules and the class of $S$-weakly cotorsion $R$-modules is closed under extensions, we get $X$ is $S$-weakly cotorsion. Hence, by assumption, $X$ is $S$-pure injective.
\end{proof}

\section{$S$-phantom morphisms}\label{Sec: S-Phantom Morphisms}
Let $R$ be a commutative ring and $S\subseteq R$  be a multiplicative subset that may contain some zero-divisors.

\begin{definition}\label{Def: S-Phantom}
A morphism $f: G\to Z$ in $\Mod R$ is an $S$-phantom morphism if the pullback along $f$ of any short exact sequence with right term $Z$, as in the diagram
\[\begin{tikzcd}
\eta': &	0 \rar  &X\rar\dar[equals]&U\rar\dar& G \dar{f}\rar& 0\\
\eta:&	0\rar & X\rar& Y\rar & Z \rar & 0
	\end{tikzcd}\]
belongs to $\bS$.
\end{definition}

\begin{example}
Let $G$ be an $S$-strongly flat module. Since $G$ is flat and $G_S$ and $G/sG$ are projective modules, it follows that every morphism $f: G\rt X$ is $S$-phantom. Moreover,  Lemma \ref{subfunctor} implies that every morphism $f: X\rt G$, is an $S$-phantom morphism.
\end{example}

\begin{remark}
By Lemma \ref{subfunctor}, we observe that a morphism $f: G\rt Z$ is an $S$-phantom morphism if, for every $R$-module $X$, the morphism
\[\Ext^1_R(f, X): \Ext^1_R(Z, X)\rt \Ext^1_R(G, X)\]
takes value in the subgroup $\Ext^1_{\bS}(G, X)$.
\end{remark}

Recall that a two sided ideal $\SI$ of an additive category $\SA$ is an additive subfunctor of the additive bifunctor $\Hom: \SA^{\op}\times \SA\rt Ab$  that associates to every pair $A$ and $B$ of objects in $\SA$ a subgroup $\SI(A, B)\subseteq \Hom_\SA(A, B)$ such that for all $f \in \SI(A, B)$, $g\in \Hom_\SA(B, D)$ and $h\in \Hom_\SA(C, A)$, $gfh\in\SI(C, D)$. Let $\SX$ be an additive subcategory of the category of $R$-modules. Let $\CI(\SX)$ be the ideal generated by the identity morphisms $1_X$, for every $X\in\SX$. This is the ideal of morphisms in the category of $R$-modules that factor through an object in $\SX$.

\begin{remark}
It is easy to see that the collection of $S$-phantom morphisms forms an ideal. The ideal of all $S$-phantom morphisms will be denoted by $\SPh$. An ideal is called $S$-phantom if it consists of $S$-phantom morphisms. So $\SPh$ is an $S$-phantom ideal.
\end{remark}

\begin{example}
Let $\SX$ be a class of $S$-strongly flat modules. Then $\CI(\SX)$ is an $S$-phantom ideal.
\end{example}

To verify that a morphism $f: F\rt X$ is $S$-phantom, it suffices to show that the pullback along $f$ of a short exact sequence of the form $0\rt K\rt P\rt X\rt 0$ belongs to $\bS$, where $P$ is a projective module. We show this in the following lemma.

\begin{lemma}\label{ProjectiveRemark}
A morphism $f: F\rt X$ is $S$-phantom if and only if the pullback along $f$ of a short exact sequence of the form $0\rt K\rt P\rt X\rt 0$, with $P$ a projective $R$-module, belongs to $\bS$.
\end{lemma}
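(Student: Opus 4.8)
The plan is to prove the two implications separately, with essentially all the weight on the `if' direction. The `only if' direction is immediate: if $f$ is $S$-phantom, then by Definition \ref{Def: S-Phantom} the pullback along $f$ of \emph{any} short exact sequence with right term $X$ lies in $\bS$, and a sequence $0\rt K\rt P\rt X\rt 0$ with $P$ projective is a special case. Note also that a projective presentation of $X$ always exists, so the hypothesis is non-vacuous.

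For the `if' direction, fix a projective presentation $\pi: 0\rt K\rt P\rt X\rt 0$ whose pullback $f^*\pi$ along $f$ belongs to $\bS$, and let $\eta: 0\rt A\rt B\rt X\rt 0$ be an arbitrary short exact sequence with right term $X$. I would first express $\eta$ as a pushout of $\pi$. Since $P$ is projective and $B\rt X$ is surjective, the map $P\rt X$ lifts to a morphism $g: P\rt B$ over $X$; restricting to kernels yields a map $h: K\rt A$ with a morphism of short exact sequences from $\pi$ to $\eta$ that is the identity on the right term. A standard application of the short five lemma to the resulting square shows that $\eta$ is isomorphic to the pushout $h_*\pi$ of $\pi$ along $h$.

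The key step is then the commutation of pullback and pushout. Pushing out along $h$ alters only the left-hand (sub)module, while pulling back along $f$ alters only the right-hand (quotient) module, so these operations act on independent variables of the bifunctor $\Ext$ and therefore commute: $f^*\eta = f^*(h_*\pi) = h_*(f^*\pi)$. Since $f^*\pi\in\bS$ by hypothesis and $\bS$ is closed under pushouts by Lemma \ref{subfunctor}, we conclude $f^*\eta = h_*(f^*\pi)\in\bS$. As $\eta$ was arbitrary, $f$ is $S$-phantom.

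I expect the only genuine obstacle to be the careful verification of the two homological facts on which the whole reduction rests: that $\eta$ is the pushout of the fixed projective presentation $\pi$, and that pulling back commutes with pushing out. Both are routine consequences of the bifunctoriality of $\Ext^1$ in its two arguments, but they should be stated cleanly, since without them the reduction from an arbitrary extension to the single projective presentation does not go through.
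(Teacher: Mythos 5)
Your proposal is correct and follows essentially the same route as the paper: use projectivity of $P$ to realize an arbitrary extension of $X$ as a pushout of the fixed presentation along a map on kernels, then transfer the hypothesis through the commutation of pullback and pushout, invoking closure of $\bS$ under pushouts (Lemma \ref{subfunctor}). The paper packages the pullback/pushout interchange into one commutative diagram rather than stating it as a separate bifunctoriality fact, but the argument is the same.
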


\begin{proof}
Let $0\rt Y\rt Z\rt X\rt 0$ be an arbitrary short exact sequence of modules. Since $P$ is a projective module, there is a morphism $P\rt Z$ and therefore a morphism $g: K\rt Y$. Now consider the following commutative diagram.
\begin{equation*}
\begin{tikzcd}[column sep=small]
 &\delta: &0\ar{rr}&& K\dlar{g}\ar{rr}\ar[equals]{dd} &&U'\dlar\ar{rr}\ar{dd} &&F\dlar[equals]\ar{dd}[near start]{f}\ar{rr}&&0\\
 \gamma: &  0\ar{rr}&&   Y\ar{rr}\ar[equals]{dd} &&U\ar{rr}\ar{dd}&&F\ar{dd}[near start]{f}\ar{rr}&&0&& \\
    &\eta: ~~~~~~~~~&0\ar{rr}&&K\dlar{g}\ar{rr}&&P\ar{rr}\dlar&&X\dlar[equals]\ar{rr}&&0\\
    & 0\ar{rr}&&Y\ar{rr}&&Z\ar{rr}&&X\ar{rr}&&0
\end{tikzcd}
\end{equation*}
We have to show that $\gamma$ is in $\bS$. Since by the assumption $\delta$ is the pullback of $S$-phantom morphism $f$ along $\eta$, so in $\bS$. Now since $\bS$ is closed under pushout, $\gamma$ that is the pushout of $\delta$, is in $\bS$.
\end{proof}

\begin{proposition}\label{S-phantomIsOrthogonalOfSpureinj}
A morphism $f: F\rt X$ is $S$-phantom if and only if for every $S$-pure injective module $Y$,  $\Ext^1_R(f, Y)=0$. In other words $\SPh={}^{\perp}(\SPI)$.
\end{proposition}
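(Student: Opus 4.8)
The plan is to establish the two containments separately: $\SPh\subseteq{}^{\perp}(\SPI)$ follows directly from the way $S$-pure exact sequences split against $S$-pure injective modules, while the reverse containment is obtained by reducing to projective presentations via Lemma \ref{ProjectiveRemark} and then pushing out along the explicit $S$-pure injective preenvelope of Proposition \ref{SsI is PreEnveloping}.

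For the forward implication, suppose $f: F\rt X$ is $S$-phantom and let $Y\in\SPI$. To see that $\Ext^1_R(f, Y)=0$, I would take an arbitrary class $\eta\in\Ext^1_R(X, Y)$, represented by a short exact sequence $0\rt Y\rt Z\rt X\rt 0$, and observe that $\Ext^1_R(f, Y)(\eta)$ is precisely the class of the pullback of this sequence along $f$. Since $f$ is $S$-phantom, that pullback lies in $\bS$, and since $Y$ is $S$-pure injective it splits; hence $\Ext^1_R(f, Y)(\eta)$ is the zero class. As $\eta$ was arbitrary, $\Ext^1_R(f, Y)=0$.

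For the converse, assume $\Ext^1_R(f, Y)=0$ for every $Y\in\SPI$. By Lemma \ref{ProjectiveRemark} it is enough to check that the pullback $\eta': 0\rt K\st{j}{\rt} U\rt F\rt 0$ of a fixed sequence $\eta: 0\rt K\rt P\rt X\rt 0$ with $P$ projective lies in $\bS$. I would take the $S$-pure injective preenvelope $\iota: K\rt E$ of Proposition \ref{SsI is PreEnveloping}; its defining sequence belongs to $\bS$, so $\iota$ is an $S$-pure monomorphism and $E\in\SPI$. Writing $\eta'=f^{\ast}\eta$ and using that the two actions of $f$ and $\iota$ on $\Ext^1$ commute, the pushout of $\eta'$ along $\iota$ is $\iota_{\ast}f^{\ast}\eta=f^{\ast}\iota_{\ast}\eta=\Ext^1_R(f, E)(\iota_{\ast}\eta)$, which vanishes by hypothesis since $E\in\SPI$. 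Thus the pushout of $\eta'$ along $\iota$ splits, and composing the splitting retraction with the canonical pushout map produces a morphism $h: U\rt E$ with $hj=\iota$.

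It remains to deduce that $j$ is itself an $S$-pure monomorphism from the factorization $\iota=hj$ of the $S$-pure monomorphism $\iota$, which I would do by checking each of the three defining conditions of $\bS$ passes to the left factor. Tensoring $\iota=hj$ with an arbitrary module $M$ gives $M\otimes\iota=(M\otimes h)(M\otimes j)$, and injectivity of $M\otimes\iota$ forces injectivity of $M\otimes j$, so $j$ is pure; similarly $\iota\otimes_R R_S=(h\otimes_R R_S)(j\otimes_R R_S)$ is a split monomorphism, whence so is $j\otimes_R R_S$, and the same reasoning applies to $j\otimes_R R/sR$ for each $s\in S$. In view of Lemma \ref{EquivalentConditionsForS-StronglyPure} this places $\eta'$ in $\bS$ and finishes the argument. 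The pushout computation and the forward direction are formal; I expect the only genuinely delicate (though still elementary) point to be this last step, where one must confirm simultaneously that purity, $R_S$-splitting, and $R/sR$-splitting are all inherited by a left factor of an $S$-pure monomorphism.
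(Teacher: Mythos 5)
Your proposal is correct and follows essentially the same path as the paper's proof: the forward direction is identical, and for the converse you likewise reduce to a projective presentation via Lemma \ref{ProjectiveRemark}, embed the kernel $K$ into an $S$-pure injective module (you via Proposition \ref{SsI is PreEnveloping}, the paper via the envelope of Proposition \ref{SsI is Enveloping}), use the vanishing hypothesis to split the pushed-out sequence, and extract the factorization $hj=\iota$. The only divergence is the finish: the paper exhibits the pulled-back sequence as a pullback of the envelope sequence and invokes closure of $\bS$ under pullbacks (Lemma \ref{subfunctor}), whereas you tensor the factorization and check the three conditions of Definition \ref{Def; S-pure} directly, which is equally valid and makes your closing citation of Lemma \ref{EquivalentConditionsForS-StronglyPure} unnecessary, since what you verify is the definition itself rather than that lemma's Hom-criteria.
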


\begin{proof}
Let $f: F\rt X$ be an $S$-phantom morphism and $Y$ be an $S$-pure injective module. Then the induced morphism
  \[\Ext^1_R(f, Y): \Ext^1_R(X, Y)\rt \Ext^1_R(F, Y)\]
is zero. Indeed,  since $f$ is $S$-phantom, the pullback of any extension in $\Ext^1_R(X, Y)$ along $f$ is in $\bS$ and start with $S$-pure injective module $Y$. Hence it is split.

  For the converse, let the morphism $f: F\rt X$ satisfies $\Ext^1_R(f, Y)=0$ for every $S$-pure injective module $Y$. To show that $f$ is $S$-phantom, by Lemma \ref{ProjectiveRemark}, it is enough to show that the pullback along $f$ of a short exact sequence of the form $0\rt K\rt P\rt X\rt 0$, where $P$ is projective, is in $\bS$. Let $e: K\rt \SPE(K)$ be the $S$-pure injective envelope of $K$ and consider the following commutative diagram
  \begin{equation*}
\begin{tikzcd}[column sep=small]
 &\delta: &0\ar{rr}&& K\dlar{e}\ar{rr}{i}\ar[equals]{dd} &&U'\dlar\ar{rr}\ar{dd} &&F\dlar[equals]\ar{dd}[near start]{f}\ar{rr}&&0\\
 \gamma: &  0\ar{rr}&&   \SPE(K)\ar{rr}\ar[equals]{dd} &&U\ar{rr}\ar{dd}&&F\ar{dd}[near start]{f}\ar{rr}&&0&& \\
    &\eta: ~~~~~~~~~&0\ar{rr}&&K\dlar{e}\ar{rr}&&P\ar{rr}\dlar&&X\dlar[equals]\ar{rr}&&0\\
    & 0\ar{rr}&&\SPE(K)\ar{rr}&&Z\ar{rr}&&X\ar{rr}&&0.
\end{tikzcd}
\end{equation*}
We have to show that $\delta$ is in $\bS$. By the assumption, the short exact sequence $\gamma$ is split, so there exists a morphism $g: U'\rt \SPE(K)$ such that $gi=e$. Now the proof can be completed by a similar argument as at the end of the proof of Proposition \ref{LeftOrthogonalOfS-pureInjective}, which means $\delta$ is in $\bS$.
\end{proof}

Recall that a pair $(\CI, \CJ)$ of ideals of the category of $R$-modules is an ideal cotorsion pair \cite[Definition 12]{FGHT} if $\CJ=\CI^{\perp}$ and $\CI={}^{\perp} \CJ$, where
\[\CI^{\perp}=\{ j| ~\Ext^1_R(i, j)=0, ~~~{\rm for~ all} ~i\in\CI \},\]
\[{}^{\perp} \CJ =\{i| ~\Ext^1_R(i, j)=0, ~~~{\rm for~ all} ~j\in\CJ \}.\]

\begin{remark}
The above proposition may be reformulated as the ideal analog of Proposition \ref{LeftOrthogonalOfS-pureInjective}, that is,
\[{}^\perp (\CI(\SPI))=\SPh.\]
This, in particular, implies that $(\SPh, (\SPh)^{\perp})$ is an ideal cotorsion pair.
\end{remark}

\s Let $\CI$ be an ideal in the category of $R$-modules. An $\CI$-precover of an $R$-module $M$ is a morphism $i: X\rt M$ in $\CI$  such that any other morphism $i': X'\rt M$ in $\CI$ factors through $i$.
An $\CI$-precover $i: X\rt M$ is special if it is obtained as the pushout of a short exact sequence $\eta$ along a morphism $j: Z\rt N$ in $\CI^{\perp}$:

\[\begin{tikzcd}
\eta: &	0 \rar  &Z\rar\dar{j}&Y\rar\dar&M\dar[equals]\rar& 0\\
&	0\rar&N\rar &X\rar{i}& M\rar & 0.
	\end{tikzcd}\]

The ideal $\CI$ is special precovering if every $R$-module $M$ has a special $\CI$-precover $i:X\rt M$. A $\CJ$-preenvelope and a special $\CJ$-preenvelope for an ideal $\CJ$ is defined dually.

\begin{theorem}\label{Th: S-Phantom Precover}
Let $R$ be a commutative ring and $S\subseteq R$ be a multiplicative subset. Then every $R$-module has a special $S$-phantom precover.
\end{theorem}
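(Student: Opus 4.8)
The plan is to imitate Herzog's construction of a phantom precover \cite{H1}, replacing the pure injective envelope by the $S$-pure injective envelope furnished by Proposition \ref{SsI is Enveloping}. Fix an $R$-module $M$ and choose a projective presentation $0\rt K\rt P\rt M\rt 0$ with $P$ projective. Let $e: K\rt \SPE(K)$ be the $S$-pure injective envelope of $K$, and form the pushout of this presentation along $e$:
\[\begin{tikzcd}
\eta: & 0 \rar & K \rar{k}\dar{e} & P\rar\dar{\pi} & M\rar\dar[equals] & 0\\
\gamma: & 0\rar & \SPE(K)\rar & F\rar{\phi} & M\rar & 0.
\end{tikzcd}\]
I claim that $\phi: F\rt M$ is the desired special $S$-phantom precover, so the whole theorem reduces to checking three things: that $\phi\in\SPh$, that $\phi$ is a precover, and that it is special.

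First I would verify $\phi\in\SPh$. By Proposition \ref{S-phantomIsOrthogonalOfSpureinj} it suffices to prove $\Ext^1_R(\phi, Y)=0$ for every $S$-pure injective module $Y$. The key input is that $e$ is an $S$-pure monomorphism, i.e. $0\rt K\rt \SPE(K)\rt \SPE(K)/K\rt 0$ lies in $\bS$; this is inherent in the envelope, whose underlying class of $S$-pure extensions of $K$ is closed under direct limits (Lemma \ref{SsI closed under Limit}). Consequently, for $Y\in\SPI$ every morphism $K\rt Y$ extends along $e$, so $\Hom_R(e, Y)$ is surjective. Applying $\Hom_R(-, Y)$ to the morphism $\eta\rt\gamma$ of short exact sequences and using naturality of the connecting homomorphism, together with $\Ext^1_R(P, Y)=0$ (so the connecting map of $\eta$ maps \emph{onto} $\Ext^1_R(M, Y)$), a short diagram chase shows the connecting map of $\gamma$ is surjective as well, which is precisely $\Ext^1_R(\phi, Y)=0$. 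Alternatively, one may confirm the $S$-phantom condition via Lemma \ref{ProjectiveRemark} by pulling the presentation $\eta$ back along $\phi$.

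Next I would settle specialness and the precover property. Specialness is immediate from the construction: $\gamma$ is the pushout of the short exact sequence $\eta$ along $e$, and $e$ lies in $(\SPh)^{\perp}$ because it factors through the $S$-pure injective module $\SPE(K)$, whence $e\in \CI(\SPI)\subseteq (\SPh)^{\perp}$, the inclusion coming from $\SPh={}^{\perp}(\CI(\SPI))$. For the precover property, let $\phi': F'\rt M$ be an arbitrary $S$-phantom morphism and pull $\gamma$ back along $\phi'$. Since $\phi'$ is $S$-phantom and $\gamma$ has right term $M$, the resulting top row lies in $\bS$ by Definition \ref{Def: S-Phantom}; as it begins with the $S$-pure injective module $\SPE(K)$, it splits. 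Composing a section with the induced map $F'\times_M F\rt F$ yields $\psi: F'\rt F$ with $\phi\psi=\phi'$. Hence every $S$-phantom morphism into $M$ factors through $\phi$, so $\phi$ is an $\SPh$-precover.

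The main obstacle is the $S$-phantom step: one must be certain that the $S$-pure injective envelope $e$ genuinely gives a sequence in $\bS$ (so that $S$-pure injectives are injective relative to $e$), and then transport the surjectivity of $\Hom_R(e, Y)$ across the pushout square to the connecting homomorphism of $\gamma$. Everything else is already in place---existence of the envelope (Proposition \ref{SsI is Enveloping}), closure of $\bS$ under pushouts and pullbacks (Lemma \ref{subfunctor}), and the automatic splitting of a $\bS$-sequence that starts at an $S$-pure injective---so the remaining work is routine diagram chasing.
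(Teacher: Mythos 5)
Your proof is correct, and the core construction coincides with the paper's: both push out a projective presentation $\eta: 0\rt K\rt P\rt M\rt 0$ along the $S$-pure injective envelope $e\colon K\rt \SPE(K)$ and take the resulting map $\phi\colon F\rt M$, with specialness in both cases coming from $e\in(\SPh)^{\perp}$. The difference lies in how the two key properties are verified. For $S$-phantomness, the paper reduces via Lemma \ref{ProjectiveRemark} to the pullback $\delta$ of $\eta$ along $\phi$, produces by a homotopy-lifting argument a map $g\colon F'\rt\SPE(K)$ with $gi=e$, and then uses closure of $\bS$ under pullbacks; you instead invoke Proposition \ref{S-phantomIsOrthogonalOfSpureinj} and kill $\Ext^1_R(\phi,Y)$ for $Y\in\SPI$ via the connecting-homomorphism identity $\partial_\gamma=\partial_\eta\circ\Hom_R(e,Y)$, both factors being surjective (the first because $\Ext^1_R(P,Y)=0$, the second because $Y$ is injective relative to the $\bS$-sequence defining the envelope). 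This is a legitimate shortcut --- Proposition \ref{S-phantomIsOrthogonalOfSpureinj} precedes the theorem and does not depend on it --- and it is arguably cleaner, since the paper's homotopy argument essentially reproves a special case of that proposition at the level of explicit $\bS$-sequences. For the precover property, the paper outsources the work to \cite[Proposition 8]{H3}, whereas you give a self-contained argument: the pullback of $\gamma$ along any $S$-phantom $\phi'\colon F'\rt M$ lies in $\bS$ and begins at the $S$-pure injective module $\SPE(K)$, hence splits, and composing a section with the projection gives $\psi$ with $\phi\psi=\phi'$. So your write-up buys independence from the external reference, at the cost of relying on the internal orthogonality result. One small correction: the fact that the envelope sequence lies in $\bS$ is not a consequence of Lemma \ref{SsI closed under Limit} (which concerns closure under direct limits and is used only to prove existence of envelopes); it is built into the definition of an $S$-pure injective extension/envelope, exactly as the paper uses it in the proof of Proposition \ref{LeftOrthogonalOfS-pureInjective}. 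The fact itself is what you need, so this does not affect the validity of your argument.
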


\begin{proof}
  Let $M$ be an $R$-module and consider the short exact sequence $\eta: 0\rt K\rt P\rt M\rt 0$. By taking the pushout along the $S$-pure injective envelope $e: K\rt \SPE(K)$, we get the diagram.
   \[\begin{tikzcd}
 &	0 \rar  &K\rar{i}\dar{e}&P\rar\dar&M\dar[equals]\rar& 0\\
&	0\rar &\SPE(K)\rar& F\rar{f} & M \rar & 0.
	\end{tikzcd}\]
  We claim that $f: F\rt M$ is an $S$-phantom morphism. To show this, by Lemma \ref{ProjectiveRemark}, it is enough to show that the pullback along $f$ of the short exact sequence $\eta$ is in $\bS$, i.e. the short exact sequence $\delta$ in the diagram
  \[\begin{tikzcd}
& \delta: \ \	0 \rar  &K\rar{i}\dar[equals]&F'\rar\dar&F\dar{f}\rar& 0\\
&	0\rar&K\rar{i} &P\rar& M\rar & 0,
	\end{tikzcd}\]
is in $\bS$. By composing the two above diagrams, we get the following diagram.
   \[\begin{tikzcd}
 &	0 \rar  &K\rar{i}\dar{e}&F'\rar\dar\dlar[dotted]{g}&F\dar{f}\rar\dlar{1_F} & 0\\
&	0\rar &\SPE(K)\rar& F\rar{f} & M \rar & 0.
	\end{tikzcd}\]
Now by the homotopy, there exists a morphism $g: F'\rt \SPE(K)$ such that $e=gi$. By the similar argument as at the end of the proof of Proposition \ref{LeftOrthogonalOfS-pureInjective}, we get $\delta$ in $\bS$.

To complete the proof, we note that since $\Ext^1_R(\SPh,e)=0$, so $e\in (\SPh)^\bot$. Hence  by \cite[Proposition 8]{H3}, $f$ is a special phantom precover.
\end{proof}

An ideal cotorsion pair $(\CI, \CJ)$  is called special precovering, resp. special preenveloping, if the ideal $\CI$, resp.  $\CJ$, is special precovering, resp. special preenveloping.

\begin{remark} The above theorem implies that the ideal cotorsion pair $(\SPh, (\SPh)^{\perp})$ is special precovering and Salce's Lemma \cite[Theorem 18]{FGHT}, implies that $(\SPh)^{\perp}$ is special preenveloping.  In this case, we say that ideal cotorsion pair $(\SPh, (\SPh)^{\perp})$ is complete.  Moreover, by Salce's Lemma, we may characterize the ideal $(\SPh)^{\perp}$ as the object ideal generated by modules $F'$ that arise in the short exact sequence
\[0\rt Y\rt F'\rt E\rt 0\]
where $Y$ is $S$-pure injective and $E$ is an injective modules.
To see this characterization, let $M$ be an $R$-module. Let $0\rt M\rt E\rt X\rt 0$ be injective envelope of $M$. Since $\SPh$ is special precovering, for $R$-module $X$ there exists a special phantom precover
 \[\begin{tikzcd}
 &	0 \rar  &Y\rar&F\rar{f}& X \rar& 0
	\end{tikzcd}\]
where $Y$ is $S$-pure injective. Consider the following pullback diagram
\[\begin{tikzcd}
 & & & 0\dar &0\dar\\
 & & & Y\rar[equals]\dar& Y\dar\\
 &	0 \rar  &M\rar{j}\dar[equals]&F'\rar\dar&F\dar{f}\rar & 0\\
&	0\rar &M\rar& E\rar \dar& X \rar\dar & 0\\
&  & & 0& 0.
	\end{tikzcd}\]
By the argument, as in the proof of Salce's Lemma, we obtain that $j: M\rt F'$ is special $(\SPh)^{\perp}$-preenvelope of $M$ such that it is an extension of $Y$ by an injective module $E$. Since $F'$ is an object in $(\SPh)^{\perp}$, it follows that every morphism in $(\SPh)^{\perp}$ factors through an object in $(\SPh)^{\perp}$. Hence $(\SPh)^{\perp}$ is an object ideal.
 \end{remark}

We end this section by the following observation regarding ideal $\SPh$  which follows by  \cite[Theorem 1]{FGHT}. First, we recall some notions and notations. Let $(\CA,\varepsilon )$ be an exact category and  $\CF$ be a subfunctor of $\Ext$. The $\CF$-phantom morphisms form an ideal \cite{FGHT}, denoted by $\Phi(\CF)$. An additive subfunctor $\CF\subseteq \Ext$ has enough injective morphism if for every object $M\in\CA$ there is an $\CF$-conflation
\[\begin{tikzcd}
 \eta: & 0\rar& M\rar{e}& N\rar&L\rar& 0,
\end{tikzcd}\]
 where $e$ is an $\CF$-injective morphism. We say that the subfunctor $\CF\subseteq\Ext$ has enough special injective morphisms if for every object $M\in\CA$, there exists an $\CF$-conflation $\eta$ as above where $e$ obtained as the pullback along an $\CF$-phantom morphism.

\begin{corollary}
Let $\SPh$ be the ideal of $S$-phantom morphisms. Then $\Ext_\bS\subseteq \Ext$ has enough injective morphisms and $\SPh=\Phi(\Ext_\bS)$.
\end{corollary}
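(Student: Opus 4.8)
The plan is to verify the two assertions separately, treating the equality of ideals as a matter of unwinding definitions and reserving the actual work for the existence of enough injective morphisms, which I would extract from the explicit $S$-pure injective preenvelope of Proposition \ref{SsI is PreEnveloping}.

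First I would dispose of the equality $\SPh = \Phi(\Ext_\bS)$. By Lemma \ref{subfunctor} the collection $\bS$ is an additive subfunctor $\Ext_\bS$ of $\Ext$, so the notion of $\Ext_\bS$-phantom morphism from \cite{FGHT} is available: a morphism $\varphi\colon U\rt Z$ lies in $\Phi(\Ext_\bS)$ precisely when the pullback along $\varphi$ of every short exact sequence ending at $Z$ belongs to $\Ext_\bS=\bS$. This is word for word Definition \ref{Def: S-Phantom}, so the two ideals coincide with no further argument.

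Next, for the phrase \emph{enough injective morphisms} I would produce, for each module $M$, an $\Ext_\bS$-conflation whose inflation is $\Ext_\bS$-injective. Proposition \ref{SsI is PreEnveloping} already hands me such a conflation: the sequence
\[\eta\colon 0\rt M\st{e}{\rt} E\rt L\rt 0,\qquad E=\PE(M)\oplus\Hom_R(R_S, M_S)\oplus\prod_{s\in S}\Hom_R(R/sR, M/sM),\]
lies in $\bS$, and its middle term $E$ is $S$-pure injective (a finite direct sum of the pure injective $\PE(M)$ and of the $S$-pure injective Hom-modules, the class of $S$-pure injectives being closed under products and summands). The key point is then that $E$ is an $\Ext_\bS$-injective object: every $S$-pure exact sequence starting at $E$ splits, so $\Ext_\bS(-, E)=0$. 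Consequently the pushout along $e$ of any $S$-pure conflation already lands in $\Ext_\bS(-,E)=0$, which is exactly the condition that $e$ be an $\Ext_\bS$-injective morphism. Thus $\eta$ witnesses that $\Ext_\bS$ has enough injective morphisms.

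With both inputs in hand I would invoke \cite[Theorem 1]{FGHT}: since $\Ext_\bS$ is an additive subfunctor of $\Ext$ possessing enough injective morphisms, its phantom ideal $\Phi(\Ext_\bS)$ is governed by that theorem, and combined with the identification $\SPh=\Phi(\Ext_\bS)$ the corollary follows. The only genuinely delicate step is matching the notion of an $\Ext_\bS$-injective morphism used in \cite{FGHT} with the property I verified; once one observes that an inflation into an $\Ext_\bS$-injective object is automatically $\Ext_\bS$-injective, because $\Ext_\bS(-,E)=0$ forces every pushout to split, everything else is bookkeeping. I expect no obstacle beyond this definitional alignment, since the substantive construction, namely the explicit $S$-pure injective preenvelope, is already available from Proposition \ref{SsI is PreEnveloping}.
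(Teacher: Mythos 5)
Your proof is correct, but it takes a genuinely different route from the paper. The paper's own proof is a one-liner: since $\SPh$ is a special precovering ideal (Theorem \ref{Th: S-Phantom Precover}), the conclusion follows from \cite[Theorem 1]{FGHT}, which converts the special precovering property of a phantom ideal $\Phi(\CF)$ into the existence of enough (special) injective morphisms for the subfunctor $\CF$. You instead verify the two assertions of the corollary directly: the equality $\SPh=\Phi(\Ext_\bS)$ is indeed purely definitional once Lemma \ref{subfunctor} guarantees that $\Ext_\bS$ is an additive subfunctor, and the enough-injectives condition is witnessed explicitly by the preenvelope sequence of Proposition \ref{SsI is PreEnveloping}, together with the (correct) observation that an inflation into an $\Ext_\bS$-injective object is automatically an $\Ext_\bS$-injective morphism, because $\Ext_\bS(-,E)=0$ kills every pushout along it. Two remarks on the comparison. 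First, your closing appeal to \cite[Theorem 1]{FGHT} is redundant: once you have produced, for every $M$, an $\bS$-conflation $0\rt M\rt E\rt L\rt 0$ with $E\in\SPI$, and noted $\SPh=\Phi(\Ext_\bS)$, both claims of the corollary are already proved; there is nothing left for that theorem to do. Second, a small point of rigor: the fact that the preenvelope sequence lies in $\bS$ is established in the \emph{proof} of Proposition \ref{SsI is PreEnveloping} (via Lemma \ref{EquivalentConditionsForS-StronglyPure}), not in its statement, so your argument implicitly re-uses that proof; you do supply the remaining ingredient ($E\in\SPI$, by closure of $\SPI$ under products and the pure injectivity of $\PE(M)$), so this is not a gap. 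The trade-off: the paper's route is shorter given the machinery already built and in fact yields the stronger conclusion that $\Ext_\bS$ has enough \emph{special} injective morphisms (inflations arising as pullbacks along $S$-phantoms), whereas your route is more elementary and self-contained, bypassing both the precovering theorem and the FGHT correspondence entirely.
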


\begin{proof}
Since $\SPh$ is a special precovering ideal, the result follows from Theorem 1 of \cite{FGHT}.
\end{proof}

\subsection{Existence of $S$-phantom cover}\label{Subsec: S-Phantom Cover}
In this subsection, we investigate the situations in which the $S$-phantom ideal is a covering ideal. As before, let $R$ be a commutative ring and $S\subseteq R$ be a multiplicative subset.

\begin{proposition}\label{EveryPhantIsSphanImpliesS-almostPerfect}
If every phantom morphism is $S$-phantom, then the ring $R$ is $S$-almost perfect.
\end{proposition}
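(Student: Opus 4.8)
The plan is to unwind the definitions so that the hypothesis becomes a statement about flat modules, and then to read off perfectness of $R_S$ and of each $R/sR$ separately. By the definition of $\bS$, every $S$-pure exact sequence is in particular pure exact, so every $S$-phantom morphism is phantom; thus $\SPh\subseteq\Ph$ always, and the hypothesis ``every phantom morphism is $S$-phantom'' is precisely the reverse inclusion $\Ph\subseteq\SPh$. The crucial input is that for every flat $R$-module $F$ the identity $1_F$ is a phantom morphism, since $\Tor^R_1(X,1_F)$ is the identity of $\Tor^R_1(X,F)=0$ for every $X$; by hypothesis $1_F$ is therefore $S$-phantom.

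First I would extract, for every flat $F$, the projectivity of its localization and of its reductions. Choose a short exact sequence $\sigma\colon 0\rt K\rt P\rt F\rt 0$ with $P$ projective. The pullback of $\sigma$ along $1_F$ is $\sigma$ itself, so the $S$-phantomness of $1_F$ gives $\sigma\in\bS$. By the definition of $\bS$, the sequence $\sigma\otimes_R R_S$ splits, and since $F$ is flat (so $\Tor^R_1(F,R/sR)=0$) the sequence $\sigma\otimes_R R/sR$ is exact and also splits, for every $s\in S$. As $P_S$ is projective over $R_S$ and $P/sP$ is projective over $R/sR$, we conclude that $F_S$ is a projective $R_S$-module and $F/sF$ is a projective $R/sR$-module. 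Thus the hypothesis forces every flat $R$-module to have projective localization and projective reductions.

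Next I would deduce that $R_S$ is perfect. Let $\overline F$ be a flat $R_S$-module. Because $R_S$ is flat over $R$, the module $\overline F$ is flat as an $R$-module, and since it is already an $R_S$-module one has $\overline F_S\cong\overline F$. Applying the previous step to the flat $R$-module $\overline F$ shows that $\overline F\cong\overline F_S$ is projective over $R_S$. Hence every flat $R_S$-module is projective, i.e. $R_S$ is perfect.

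Finally, the perfectness of $R/sR$ for a fixed $s\in S$ requires the analogue of the self-extension used for $R_S$, and this is where the main difficulty lies: a flat $R/sR$-module need not be flat over $R$, so it is not immediately of the form $F/sF$. The key lemma I would establish, or invoke from the theory of $S$-almost perfect rings in \cite{BP}, is that every flat $R/sR$-module $\overline G$ is a direct summand of $F/sF$ for some flat $R$-module $F$; concretely one writes $\overline G$ as a direct limit of finitely generated free $R/sR$-modules and lifts this presentation along $R\rt R/sR$. Granting this, the first step shows $F/sF$ is projective over $R/sR$, hence so is its direct summand $\overline G$; therefore every flat $R/sR$-module is projective and $R/sR$ is perfect. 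Combining the two conclusions yields that $R$ is $S$-almost perfect. I expect the lifting of flat $R/sR$-modules to flat $R$-modules to be the only nonformal point, the remainder being direct manipulation of the subfunctor $\bS$.
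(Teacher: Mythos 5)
Your reduction of the hypothesis and your treatment of $R_S$ are correct and coincide with the paper's own argument: for a flat $R$-module $F$ the identity $1_F$ is phantom, hence $S$-phantom by hypothesis; pulling back a projective presentation $0\rt K\rt P\rt F\rt 0$ along $1_F$ returns that sequence, so it lies in $\bS$, whence $F_S$ and $F/sF$ are projective over $R_S$ and $R/sR$; applying this to a flat $R_S$-module $\overline F$ (flat over $R$, with $\overline F\cong\overline F_S$) gives that $R_S$ is perfect. This is exactly what the paper does for the first half.

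The genuine gap is in your treatment of $R/sR$. The ``key lemma'' you propose --- every flat $R/sR$-module is a direct summand of $F/sF$ for some flat $R$-module $F$ --- is not what \cite{BP} provides, and your sketch of a proof does not work: writing $\overline G=\varinjlim_{i\in I}\overline F_i$ as a Lazard direct limit of finitely generated free $R/sR$-modules and lifting each transition matrix along $R\rt R/sR$ does not yield a directed system over $R$, because the lifted maps must satisfy the coherence conditions $f_{jk}f_{ij}=f_{ik}$ for all $i\le j\le k$, and arbitrary lifts of matrices will not satisfy them. This can only be arranged when the index set is a countable chain, i.e.\ for countably presented flat modules; for general directed systems the obstruction is real, and if flat modules lifted this easily along $R\rt R/sR$, much of the machinery of \cite{BP} and \cite{PS1} would be unnecessary. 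The paper's proof sidesteps exactly this point: by the proof of Lemma 7.8 of \cite{BP} (ultimately Bass's theorem), perfectness of $R/sR$ need only be tested on \emph{Bass} flat modules, i.e.\ direct limits of chains $R/sR\xrightarrow{\bar a_1}R/sR\xrightarrow{\bar a_2}\cdots$. Such a module visibly lifts: choose preimages $a_i\in R$ of the $\bar a_i$ (no compatibilities to check, since only consecutive maps of a chain are involved) and set $B=\varinjlim\,(R\xrightarrow{a_1}R\xrightarrow{a_2}\cdots)$, a Bass flat $R$-module with $B/sB\cong\overline B$. Your first step applied to $1_B$ then gives that $\overline B\cong B/sB$ is projective over $R/sR$, and $R/sR$ is perfect. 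So your argument is repaired by replacing your unproven lemma with the Bass-module criterion; as written, that lemma is unjustified and the Lazard-lifting argument for it fails.
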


\begin{proof}
Let $F$ be a flat $R_S$-module. Since $F$ is also a flat $R$-module, then $1_F$ is a phantom morphism. By assumption $1_F$ is $S$-phantom. Hence $F_S$ and $F/sF$, for every $s\in S$, are projective. Now since $F\cong F\otimes_R R_S\cong F_S$, we get $F$ is a projective $R$-module.
Therefore $F$ is an  $S$-strongly flat $R$-module. So, \cite[Lemma 3.1]{BP} implies that  $F\cong F\otimes_R R_S$  is a projective $R_S$-module. Hence $R_S$ is a perfect ring.

Now we show that $R/sR$, for every $s\in S$, is perfect. To show this, as in the proof of Lemma 7.8 of \cite{BP}, it's enough to show that every Bass flat $R/sR$-module is projective.  Let $s$ be an element in $S$, so for the Bass flat $R/sR$-module $\bar{B}$ we have isomorphism $\bar{B}\cong B/sB$ where $B$ is a Bass flat $R$-module. We consider the phantom morphism $1_B$. By the assumption $1_B$ is $S$-phantom. So $B_S$ and $B/sB$ are projective. Hence $\bar{B}$ is projective.
\end{proof}

In the following, we show that the converse of the above proposition holds for trivial phantoms.

\begin{proposition}
Let $R$ be an $S$-almost perfect ring. Then  every trivial phantom is $S$-phantom.
\end{proposition}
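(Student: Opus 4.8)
The plan is to reduce the claim to two facts already established in the paper: over an $S$-almost perfect ring every flat module is $S$-strongly flat (Facts~\ref{Facts}(v)), and any morphism that factors through an $S$-strongly flat module is automatically $S$-phantom (the Example following Definition~\ref{Def: S-Phantom}). Everything else is formal manipulation with the ideal $\SPh$.

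First I would unwind the hypothesis that $\varphi: U\rt Z$ is a trivial phantom. By definition this means $\varphi$ factors through a flat module, say $\varphi=\beta\alpha$ with $\alpha: U\rt F$, $\beta: F\rt Z$ and $F$ flat. Such a factorization already forces $\varphi$ to be phantom, since $\Tor_1^R(X,F)=0$ makes $\Tor_1^R(X,\varphi)=0$ for every $X$; but for the present argument only the factorization itself is needed.

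Next I would apply the standing hypothesis. Because $R$ is $S$-almost perfect, Facts~\ref{Facts}(v) upgrades the flat module $F$ to an $S$-strongly flat module. By the Example following Definition~\ref{Def: S-Phantom}, every morphism out of an $S$-strongly flat module is $S$-phantom, so $\beta: F\rt Z$ lies in $\SPh$. Finally, since $\SPh$ is an ideal and hence absorbs composition on either side, $\varphi=\beta\alpha$ lies in $\SPh$ as soon as $\beta$ does; this shows $\varphi$ is $S$-phantom.

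I do not anticipate a genuine obstacle: the sole mathematical input is the implication ``flat $\Rightarrow$ $S$-strongly flat'' furnished by $S$-almost perfectness, after which the conclusion is immediate from the ideal structure. The one point to state with care is that the argument could equally be routed through $\alpha: U\rt F$, which is a morphism \emph{into} an $S$-strongly flat module and so is $S$-phantom by the same Example together with Lemma~\ref{subfunctor}; post-composing with $\beta$ then yields the same conclusion.
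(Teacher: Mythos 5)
Your proof is correct and follows essentially the same route as the paper: factor $\varphi$ through a flat module $F$, use $S$-almost perfectness (Facts~\ref{Facts}(v)) to conclude $F$ is $S$-strongly flat, note that the morphism $F \rt Z$ is then $S$-phantom, and close with the ideal property of $\SPh$. The paper's proof is exactly this argument, stated more tersely.
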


\begin{proof}
Let $\varphi: U\rt Z$ be a trivial phantom morphism. Let $F$ be a flat module such that $\varphi$ factors through it. Since  by the assumption, $F$ is $S$-strongly flat, so the  morphism  $F\rt Z$ is $S$-phantom. Hence, the morphism $\varphi$ is $S$-phantom.
\end{proof}

\begin{proposition}\label{S-almostperfectImpliesS-phantoms}
Let $(\OC)$ holds for $(R, S)$. If $R$ is an $S$-almost perfect ring, then every phantom is $S$-phantom.
\end{proposition}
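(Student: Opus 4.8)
The plan is to prove the stronger structural statement that, when $R$ is $S$-almost perfect, the class of $S$-pure injective modules collapses onto the class of pure injective modules, i.e. $\SPI=\PInj$; the assertion will then be immediate from the $\Ext$-characterisation of $S$-phantom maps. Indeed, by Proposition \ref{S-phantomIsOrthogonalOfSpureinj} a morphism $\varphi\colon U\rt Z$ is $S$-phantom exactly when $\Ext^1_R(\varphi, Y)=0$ for every $Y\in\SPI$. On the other hand, if $\varphi$ is phantom and $Y$ is pure injective, then the pullback along $\varphi$ of any extension $0\rt Y\rt E\rt Z\rt 0$ is pure exact (phantomness) and starts with the pure injective module $Y$, hence splits; so $\Ext^1_R(\varphi, Y)=0$ for every pure injective $Y$. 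Thus, once $\SPI=\PInj$ is known, each phantom $\varphi$ satisfies the criterion of Proposition \ref{S-phantomIsOrthogonalOfSpureinj} and is $S$-phantom. Since $\PInj\subseteq\SPI$ always holds, the entire argument reduces to proving the inclusion $\SPI\subseteq\PInj$.

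To establish $\SPI\subseteq\PInj$, I would fix $Y\in\SPI$ and first note that $Y$ is cotorsion: by Remark \ref{SsI is Swc} it is $S$-weakly cotorsion, and since $R$ is $S$-almost perfect, Facts \ref{Facts}$(v)$ gives $\SSF=\Flat$, whence $\SWC=\SSF^{\perp}=\Flat^{\perp}$ is precisely the class of cotorsion modules. Next, consider the pure injective envelope $0\rt Y\rt \PE(Y)\rt F\rt 0$, a pure exact sequence with $F=\PE(Y)/Y$. The decisive input is that the cokernel of the pure injective envelope of a cotorsion module is flat (a theorem of Xu, \cite{Xu}; alternatively this follows from completeness of the flat cotorsion pair, which purely embeds a cotorsion module into a pure injective with flat cokernel). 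Granting this, $F$ is flat, and invoking $S$-almost perfectness once more via Facts \ref{Facts}$(v)$, $F$ is in fact $S$-strongly flat. Consequently, by Facts \ref{Facts}$(ii)$, the localisation $F_S$ is projective over $R_S$ and $F/sF$ is projective over $R/sR$ for every $s\in S$.

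Finally I would upgrade the envelope sequence to an $S$-pure one. Tensoring the pure exact sequence $0\rt Y\rt\PE(Y)\rt F\rt 0$ with $R_S$ and with $R/sR$ preserves exactness, giving
\[0\rt Y_S\rt \PE(Y)_S\rt F_S\rt 0 \qquad\text{and}\qquad 0\rt Y/sY\rt \PE(Y)/s\PE(Y)\rt F/sF\rt 0;\]
as $F_S$ and $F/sF$ are projective, both sequences split, so $0\rt Y\rt\PE(Y)\rt F\rt 0$ lies in $\bS$. Since $Y$ is injective with respect to $\bS$, this $S$-pure sequence splits, realising $Y$ as a direct summand of the pure injective module $\PE(Y)$; hence $Y$ is pure injective, which proves $\SPI\subseteq\PInj$ and completes the argument. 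The main obstacle is the flatness of $F=\PE(Y)/Y$: every other ingredient is formal bookkeeping with pullbacks, localisation, and the cotorsion pair $(\SSF,\SWC)$, whereas the flatness of the pure injective envelope's cokernel for a cotorsion module is the single non-trivial fact and must be quoted from \cite{Xu} (or reproved). I note that the essential hypothesis here is $S$-almost perfectness; the condition $(\OC)$ in the statement is available but, in this route, only enters indirectly through the Section \ref{Sec: S-Purity} results it underwrites.
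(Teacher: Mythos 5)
Your argument has a genuine gap at its decisive step: the claim that for a cotorsion module $Y$ the cokernel of the pure injective envelope $0\rt Y\rt \PE(Y)\rt F\rt 0$ is flat. This is not a theorem of Xu, and it is false in general. What \cite{Xu} proves is the statement for \emph{flat} modules: if $F$ is flat, then $\PE(F)/F$ is flat (and $\PE(F)$ is then the cotorsion envelope of $F$). For a \emph{cotorsion} module $Y$ your claimed lemma is equivalent to the very conclusion you want to extract from it: if $\PE(Y)/Y$ were flat, then $\Ext^1_R(\PE(Y)/Y, Y)=0$ because $Y$ is cotorsion, so the envelope sequence splits and $Y$ is already pure injective; conversely, if $Y$ is pure injective the cokernel is $0$. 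Thus ``the cokernel of the pure injective envelope of a cotorsion module is flat'' is a restatement of ``every cotorsion module is pure injective,'' which is precisely the restrictive condition studied in \cite[\S 3.3.5]{Xu} and \cite{HR} (the paper's Theorem \ref{Theorem 3.5.1 of Xu} is modeled on it), not a fact valid over all rings; quoting it here begs the question. Your fallback justification fails for the same reason: completeness of the flat cotorsion pair embeds $Y$ into a \emph{cotorsion} module with flat cokernel, not into a pure injective one.

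Worse, the structural statement your route is designed to prove, namely $\SPI=\PInj$ over an $S$-almost perfect ring, is false, so the gap cannot be repaired within this strategy. Take $S$ to consist of units, say $S=\{1\}$; then $R_S\cong R$ and $R/sR=0$, so a short exact sequence lies in $\bS$ if and only if it splits, whence \emph{every} $R$-module is $S$-pure injective; moreover $(\OC)$ holds trivially for $(R,S)$ and $R$ is $S$-almost perfect if and only if $R$ is perfect. Choosing $R$ perfect but not pure semisimple, e.g. $R=k[x,y]/(x,y)^2$, gives $\SPI=\Mod R\supsetneq\PInj$. Note that in this example every module is indeed cotorsion, so all the other steps of your argument go through --- which isolates the false lemma as the exact point of collapse. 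The paper's proof is genuinely different: it never asserts $\SPI\subseteq\PInj$, but combines Proposition \ref{S-phantomIsOrthogonalOfSpureinj} with the equalities of module classes ${}^{\perp}(\SPI)=\SSF=\Flat={}^{\perp}(\PInj)$ (the first one being Proposition \ref{LeftOrthogonalOfS-pureInjective}, where $(\OC)$ enters essentially, contrary to your closing remark that $(\OC)$ is inessential) together with Herzog's result that phantoms are $\Ext$-orthogonal to pure injectives; whether that class-level equality suffices for the morphism-level conclusion is a subtle point which the paper passes over quickly, but in any case its argument nowhere relies on your lemma.
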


\begin{proof}
Let $\varphi$ be a phantom morphism. In order to show that $\varphi$ is $S$-phantom, by Proposition \ref{S-phantomIsOrthogonalOfSpureinj}, it is enough to show that $\Ext^1_R(\varphi, Y)=0$, for every $S$-pure injective $R$-module $Y$.  Consider the following equalities.
\[{}^{\perp} (\SPI)=\SSF=\Flat={}^{\perp}(\PInj),\]
where $\PInj$ is the class of pure injective $R$-modules. The first equality follows by Proposition \ref{LeftOrthogonalOfS-pureInjective}. The second equality follows since $R$ is $S$-almost perfect and the third, follows by \cite[Lemma 3.4.1]{Xu}.
Since $\varphi$ is phantom, by \cite[Proposition 6]{H1}, $\Ext^1_R(\varphi, X)=0$ for every pure injective $R$-module $X$. Now the result follows by the equality ${}^{\perp} (\SPI)={}^{\perp}(\PInj)$.
\end{proof}

\begin{proposition}
Let $(\OC)$ holds for $(R, S)$. If $R$ is an $S$-almost perfect ring, then the ideal of $S$-phantom morphisms is covering.
\end{proposition}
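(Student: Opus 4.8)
The plan is to identify the ideal $\SPh$ with the classical phantom ideal $\Ph$ under the standing hypotheses, and then transport the known covering property of $\Ph$. In other words, rather than attempting to verify the covering property of $\SPh$ from scratch, I would argue that the two ideals coincide and quote an existing coveringness result.

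First I would record the easy inclusion $\SPh \subseteq \Ph$, which holds for an arbitrary pair $(R,S)$. By Definition \ref{Def; S-pure} every $S$-pure exact sequence is in particular pure exact, so the subfunctor $\Ext_\bS$ is contained in the subfunctor of pure exact sequences. Consequently, if $f$ is $S$-phantom, then the pullback along $f$ of any short exact sequence lies in $\bS$ and hence is pure exact, which is precisely the defining condition for $f$ to be a phantom morphism. Next I would invoke Proposition \ref{S-almostperfectImpliesS-phantoms}: since $(\OC)$ holds for $(R,S)$ and $R$ is $S$-almost perfect, that proposition supplies the reverse inclusion, namely that every phantom morphism is $S$-phantom, i.e. $\Ph \subseteq \SPh$. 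Combining the two inclusions yields the equality of ideals
\[\SPh = \Ph.\]

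The conclusion then follows immediately: by \cite[Theorem 7]{H1} the phantom ideal $\Ph$ is a covering ideal of the category of $R$-modules. Since being a covering ideal is a property of the ideal as a collection of morphisms, the equality $\SPh=\Ph$ transfers this property verbatim, so $\SPh$ is covering.

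I do not expect a genuine obstacle in this final step, as it is essentially a corollary: all the substantive work has already been carried out, on the one hand in Proposition \ref{S-almostperfectImpliesS-phantoms}, where the hypotheses $(\OC)$ and $S$-almost perfectness enter through the chain ${}^{\perp}(\SPI)=\SSF=\Flat={}^{\perp}(\PInj)$, and on the other hand in the cited fact that $\Ph$ is covering. The only point requiring care is to recognize that the two ideals genuinely coincide under these hypotheses. One could alternatively try to derive coveringness directly from the special precovering property established in Theorem \ref{Th: S-Phantom Precover}, combined with a closure-under-direct-limits argument in the spirit of Enochs' theorem for ideals; but the identification $\SPh=\Ph$ is both shorter and conceptually cleaner, and it makes transparent why the two extra hypotheses are exactly what is needed.
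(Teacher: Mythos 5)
Your proof is correct and takes essentially the same route as the paper, which likewise deduces the result by combining Proposition \ref{S-almostperfectImpliesS-phantoms} with Theorem 7 of \cite{H1}. The only difference is that you spell out the trivial inclusion $\SPh\subseteq\Ph$ (needed for the identification $\SPh=\Ph$ to transfer the covering property), which the paper leaves implicit.
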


\begin{proof}
Since $R$ is $S$-almost perfect, by Proposition \ref{S-almostperfectImpliesS-phantoms}, every phantom morphism is $S$-phantom. Hence the result follows from Theorem 7 of \cite{H1}.
\end{proof}

An $S$-phantom morphism $\varphi: U\rt Z$ is $S$-trivial if it factors through an $S$-strongly flat module. By the above results, we have the following corollary, compare \cite[Theorem 10]{H1}.

\begin{corollary}\label{Theorem 10 of H1}
   For an $R$-module $N$, the following conditions are equivalent.
   \begin{itemize}
     \item[$(1)$] Every $S$-phantom morphism $g:M\rt N$ is $S$-trivial.
     \item[$(2)$] The $S$-phantom cover of $N$ is $S$-strongly flat.
     \item[$(3)$] The $S$-phantom cover of $N$ is the $S$-strongly flat cover.
     \item[$(4)$] The kernel of the $S$-strongly flat cover of $N$ is $S$-pure injective.
   \end{itemize}
\end{corollary}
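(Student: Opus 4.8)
The plan is to establish the four conditions by the cycle $(1)\Rightarrow(2)\Rightarrow(3)\Rightarrow(4)\Rightarrow(1)$, working throughout with the $S$-phantom cover $\phi\colon F\rt N$ (which we assume exists, as is guaranteed in the ambient setting of the preceding results) and keeping in mind two facts: by the Example following Definition \ref{Def: S-Phantom}, every morphism out of an $S$-strongly flat module is $S$-phantom; and $\SSF={}^{\perp}\SWC$ is closed under direct summands, while $S$-pure injectivity, being injectivity relative to $\bS$, likewise passes to summands.

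For $(1)\Rightarrow(2)$ I would apply the hypothesis to $\phi$ itself: since $\phi$ is $S$-phantom it is $S$-trivial, so it factors as $\phi=\beta\alpha$ with $\alpha\colon F\rt G$ and $\beta\colon G\rt N$, where $G$ is $S$-strongly flat. As $\beta$ is a morphism out of an $S$-strongly flat module it is $S$-phantom, hence factors through the $S$-phantom precover $\phi$, say $\beta=\phi\gamma$. Then $\phi=\phi(\gamma\alpha)$, and the minimality of the cover forces $\gamma\alpha$ to be an automorphism of $F$; thus $\alpha$ is a split monomorphism, $F$ is a direct summand of $G$, and so $F$ is $S$-strongly flat. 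For $(2)\Rightarrow(3)$, assuming $F$ is $S$-strongly flat I would verify that $\phi$ is an $\SSF$-precover: any $\alpha\colon G'\rt N$ with $G'$ $S$-strongly flat is $S$-phantom, hence factors through $\phi$. The minimality demanded of an $\SSF$-cover is exactly the minimality $\phi$ already enjoys as the $S$-phantom cover, so $\phi$ is the $S$-strongly flat cover.

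The main obstacle is $(3)\Rightarrow(4)$, where the construction of Theorem \ref{Th: S-Phantom Precover} must be invoked. There I would take the special $S$-phantom precover $f\colon F_1\rt N$ obtained from a projective presentation $0\rt K_0\rt P\rt N\rt 0$ by pushout along the $S$-pure injective envelope $e\colon K_0\rt\SPE(K_0)$, so that $\Ker f=\SPE(K_0)$ is $S$-pure injective. The delicate point is to compare the cover with this precover: since $\phi$ is a cover and $f$ a precover, factoring $\phi$ through $f$ and $f$ back through $\phi$ and then applying the cover property yields a decomposition $F_1\cong F\oplus F''$ under which $f$ corresponds to $\phi\oplus 0$, and hence $\Ker f\cong\Ker\phi\oplus F''$. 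As $\Ker f$ is $S$-pure injective and this class is closed under summands, $\Ker\phi$ is $S$-pure injective; by $(3)$ this kernel is precisely the kernel of the $S$-strongly flat cover, which gives $(4)$. I expect the care here to lie in justifying the ideal-theoretic principle that the cover is a direct summand of any precover and in checking that the complement $F''$ lands inside $\Ker f$.

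Finally, for $(4)\Rightarrow(1)$ I would argue directly. Write the $S$-strongly flat cover as $0\rt K\rt G\st{\psi}{\rt} N\rt 0$ with $K=\Ker\psi$ now $S$-pure injective, and let $g\colon M\rt N$ be any $S$-phantom morphism. Pulling $\psi$ back along $g$ produces a short exact sequence $0\rt K\rt M'\rt M\rt 0$ which, because $g$ is $S$-phantom, lies in $\bS$; since it begins at the $S$-pure injective module $K$, it splits. Composing a section $\sigma\colon M\rt M'$ with the pullback morphism $M'\rt G$ and then $\psi$ recovers $g$ and exhibits a factorization of $g$ through the $S$-strongly flat module $G$. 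Thus $g$ is $S$-trivial, and the cycle closes.
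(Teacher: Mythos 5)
Your proof is correct and is essentially the argument the paper intends: the paper states this corollary without a written proof, deferring to ``the above results'' and the comparison with Theorem 10 of \cite{H1}, and your cycle $(1)\Rightarrow(2)\Rightarrow(3)\Rightarrow(4)\Rightarrow(1)$ is exactly the adaptation of Herzog's argument to the $S$-setting. All the delicate points check out: the factorization-plus-minimality arguments in $(1)\Rightarrow(2)$ and $(2)\Rightarrow(3)$, the use of the special precover of Theorem \ref{Th: S-Phantom Precover} (whose kernel $\SPE(K)$ is $S$-pure injective) together with the fact that a cover is a direct summand of any precover in $(3)\Rightarrow(4)$, and the splitting of $\bS$-sequences beginning at an $S$-pure injective module in $(4)\Rightarrow(1)$.
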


\section{Optimistic Conjecture}\label{Sec: OC}
In this section we prove an ideal version of the Optimistic Conjecture raised in \cite{PS1}. Let $R$ be a commutative ring and $S\subseteq R$ be a multiplicative subset.

\begin{proposition}\label{Prop: OC1}
Let $\varphi:U\rt Z$ be an $S$-phantom morphism. The following hold.
\begin{itemize}
  \item [$(1)$] $\varphi_S: U_S\rt Z_S$ is a projective morphism.
  \item [$(2)$]  For every $s\in S$, $\varphi/s\varphi: U/sU\rt Z/sZ$ is a projective morphism.
\end{itemize}
 \end{proposition}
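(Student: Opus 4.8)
The plan is to reduce each statement to the defining property of an $S$-phantom morphism. Recall that $\varphi_S$ is a projective morphism of $R_S$-modules exactly when $\Ext^1_{R_S}(\varphi_S, N)=0$ for every $R_S$-module $N$, i.e. when every extension $\eta\colon 0\rt N\rt W\rt Z_S\rt 0$ of $R_S$-modules pulls back along $\varphi_S$ to a split sequence; likewise $\varphi/s\varphi$ is projective over $R/sR$ exactly when every extension of $R/sR$-modules ending at $Z/sZ$ pulls back along $\varphi/s\varphi$ to a split sequence. So in both cases I must produce, from the $S$-phantom hypothesis, a splitting of a suitable pullback.

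For part $(1)$, I would begin with an arbitrary short exact sequence $\eta\colon 0\rt N\rt W\rt Z_S\rt 0$ of $R_S$-modules, pull it back along the canonical map $c_Z\colon Z\rt Z_S$ to obtain a short exact sequence $\eta'$ of $R$-modules ending at $Z$, and then pull $\eta'$ back along $\varphi$, obtaining $\delta\colon 0\rt N\rt W''\rt U\rt 0$. By composition of pullbacks this sequence $\delta$ is the pullback of $\eta$ along $c_Z\varphi$, and since $\varphi$ is $S$-phantom we have $\delta\in\bS$; in particular $\delta_S$ splits. Because localization is exact it preserves pullbacks of short exact sequences, so $\delta_S$ is the pullback of $\eta_S$ along $(c_Z\varphi)_S=(c_Z)_S\circ\varphi_S$. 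As $(c_Z)_S$ is an isomorphism and $\eta_S\cong\eta$, the sequence $\delta_S$ is the pullback of $\eta$ along $\varphi_S$. Since $\delta_S$ splits, this pullback splits, giving $\Ext^1_{R_S}(\varphi_S, N)=0$ for all $N$, hence $\varphi_S$ is projective.

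For part $(2)$, I would run the identical construction with the canonical map $Z\rt Z/sZ$ in place of $c_Z$, obtaining $\delta\in\bS$; now the relevant consequence of $\delta\in\bS$ is that $\delta/s\delta$ is split exact. The remaining task is to identify $\delta/s\delta$ with the pullback of $\eta$ along $\varphi/s\varphi$. Applying the right exact functor $-\otimes_R R/sR$ to the pullback square defining $\delta$ yields a commutative square over $Z/sZ$ whose vertical map is $\varphi/s\varphi$; comparing the split exact sequence $\delta/s\delta\colon 0\rt N\rt W''/sW''\rt U/sU\rt 0$ with the pullback of $\eta$ along $\varphi/s\varphi$ (both being extensions of $U/sU$ by $N$), the universal property furnishes a comparison morphism that is the identity on $N$ and on $U/sU$, so the short five lemma makes it an isomorphism. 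Thus $\delta/s\delta$ is that pullback, and since it splits we conclude $\Ext^1_{R/sR}(\varphi/s\varphi, N)=0$.

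The main obstacle lies in part $(2)$: the ring map $R\rt R/sR$ is not flat, so $-\otimes_R R/sR$ need not preserve exactness, and a priori the reduced sequence could fail to be short exact with kernel $N$, which would break both the construction of the comparison morphism and its identification as a pullback. This is exactly where the strength of the $S$-pure condition enters: membership $\delta\in\bS$ forces $\delta/s\delta$ to be (split) exact, keeping $N$ as kernel and legitimizing the short five lemma step. By contrast part $(1)$ is essentially automatic, since localization is exact and commutes with the pullbacks involved. I expect the write-up to hinge on carefully tracking that the reduced bottom map really is $\varphi/s\varphi$ and that the comparison morphism restricts to the identity on the common kernel $N$.
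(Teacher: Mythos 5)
Your proposal is correct and follows essentially the same route as the paper: pull the given $R_S$- (resp.\ $R/sR$-) extension back to an $R$-module extension over $Z_S$ (resp.\ $Z/sZ$) via the composite with the canonical map $Z\rt Z_S$ (resp.\ $Z\rt Z/sZ$), use the $S$-phantom property to place the resulting sequence in $\bS$, and then identify its localization (resp.\ reduction mod $s$) with the pullback along $\varphi_S$ (resp.\ $\varphi/s\varphi$). The only cosmetic difference is that the paper invokes the ideal property to say $f\varphi$ is $S$-phantom and pulls back in one step, whereas you iterate two pullbacks and apply the definition directly; your spelled-out identification of $\delta/s\delta$ with the pullback along $\varphi/s\varphi$ is exactly the step the paper compresses into ``$\varepsilon/s\varepsilon\cong\delta$.''
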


\begin{proof}
$(1).$ Let $\eta: 0\rt M\rt N\rt Z_S\rt 0$ be an extension of $R_S$-modules. Consider the pullback diagram
\[\begin{tikzcd}
 \delta: &	0 \rar  &M\rar\dar[equals]&L\rar\dar&U_S\dar{\varphi_S}\rar& 0\\
\eta: &	0\rar&M\rar &N\rar& Z_S\rar & 0
	\end{tikzcd}\]
of $R_S$-modules. We show that $\delta$ is split exact. Let $f: Z\rt Z_S$ be the localization map and consider the pullback diagram
\[\begin{tikzcd}
 \varepsilon &	0 \rar  &M\rar\dar[equals]&W\rar\dar&U\dar{f\varphi}\rar& 0\\
\eta: &	0\rar&M\rar &N\rar& Z_S\rar & 0
	\end{tikzcd}\]
of $R$-modules. Since the $S$-phantom morphisms form an ideal, $f\varphi$ is an $S$-phantom morphism. So $\varepsilon$ is pure exact and the short exact sequence
$\varepsilon_S:\ \ 0\rt M\rt W_S\rt U_S\rt 0$ is split. But $\varepsilon_S\cong \delta$. So $\delta$ is split.

$(2).$ Let $s\in S$. Let $\eta: 0\rt M\rt N\rt Z/sZ\rt 0$ be an extension of $R/sR$-modules. Consider the pullback diagram
\[\begin{tikzcd}
 \delta: &	0 \rar  &M\rar\dar[equals]&L\rar\dar&U/sU\dar{\varphi/s\varphi}\rar& 0\\
\eta: &	0\rar&M\rar &N\rar& Z/sZ\rar & 0
	\end{tikzcd}\]
of $R/sR$-modules. We show that $\delta$  is split. Let $\pi: Z\rt Z/sZ$ be the canonical projection. Then $\pi\varphi$ is an $S$-phantom morphism. Consider the pullback diagram
\[\begin{tikzcd}
\varepsilon: &	0 \rar  &M\rar\dar[equals]&W\rar\dar&U\dar{\pi\varphi}\rar& 0\\
\eta: &	0\rar&M\rar &N\rar& Z/sZ\rar & 0
	\end{tikzcd}\]
of $R$-modules. Since $\pi\varphi$ is an $S$-phantom morphism, $\varepsilon$ is pure exact and the exact sequence
$\varepsilon/s\varepsilon: \ \  0\rt M\rt W/sW\rt U/sU\rt 0$ of $R/sR$-modules is split. But $\varepsilon/s\varepsilon\cong\delta$. Hence $\delta$ is split exact.
\end{proof}

\begin{definition}
Let $R$ be a commutative ring and $S\subseteq R$ be a multiplicative subset. An $R$-module $M$ is called $S$-almost flat, if $\Tor_1^R(R/sR, M)=0$, for every $s\in S$.
\end{definition}

In the following, we demonstrate that the converse of the previous proposition is true when we examine phantom morphisms with codomains that are $S$-almost flat modules. For example, if the codomain of a phantom map is a flat module, then we have the converse of the previous proposition.

\begin{proposition}\label{Prop: OC2}
Let $\varphi: U\rightarrow Z$ be a phantom morphism, where $Z$ is $S$-almost flat. If $\varphi_S: U_S\rightarrow Z_S$ is a projective morphism, and for every element $s \in S$, $\varphi/s\varphi: U/sU\rightarrow Z/sZ$ is a projective morphism, then $\varphi$ is $S$-phantom.
\end{proposition}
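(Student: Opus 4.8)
The plan is to fix an arbitrary short exact sequence $\eta\colon 0\to X\to Y\to Z\to 0$ with right term $Z$, form its pullback $\eta'\colon 0\to X\to U'\to U\to 0$ along $\varphi$, and check directly that $\eta'$ satisfies the three defining conditions of an $S$-pure exact sequence, i.e. that $\eta'\in\bS$. The first condition is free of charge: since $\varphi$ is a phantom morphism, $\eta'$ is already pure exact. In particular $\eta'\otimes_R R/sR$ is exact for every $s\in S$, and $\eta'_S$ is exact because localization is exact; so it remains only to prove that $\eta'_S$ and each $\eta'/s\eta'$ are \emph{split}.

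For the localization condition I would use that $(-)_S=-\otimes_R R_S$ is an exact functor and hence preserves the fibre-product description of a pullback. Applying $(-)_S$ to the pullback square identifies $\eta'_S$ with the pullback of $\eta_S\colon 0\to X_S\to Y_S\to Z_S\to 0$ along $\varphi_S\colon U_S\to Z_S$. By hypothesis $\varphi_S$ is a projective morphism of $R_S$-modules, which means precisely that every such pullback splits; therefore $\eta'_S$ is split exact.

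The mod-$s$ condition is the heart of the argument and the place where the hypothesis on $Z$ enters. Fix $s\in S$. Because $Z$ is $S$-almost flat, $\Tor_1^R(R/sR,Z)=0$, so the right-exact functor $-\otimes_R R/sR$ keeps $\eta$ short exact: $\eta/s\eta\colon 0\to X/sX\to Y/sY\to Z/sZ\to 0$ is a short exact sequence of $R/sR$-modules. Let $\delta\colon 0\to X/sX\to P\to U/sU\to 0$ be the pullback of $\eta/s\eta$ along $\varphi/s\varphi\colon U/sU\to Z/sZ$. On the other hand, $\eta'/s\eta'\colon 0\to X/sX\to U'/sU'\to U/sU\to 0$ is short exact because $\eta'$ is pure. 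Tensoring the pullback square with $R/sR$ yields maps $U'/sU'\to Y/sY$ and $U'/sU'\to U/sU$ whose composites to $Z/sZ$ agree, so the universal property of $P$ produces a comparison morphism $\eta'/s\eta'\to\delta$ that is the identity on both end terms $X/sX$ and $U/sU$. By the short five lemma this comparison is an isomorphism of extensions, hence $\eta'/s\eta'\cong\delta$ in $\Ext^1_{R/sR}(U/sU,X/sX)$. Since $\varphi/s\varphi$ is a projective morphism, $\delta$ splits, and therefore so does $\eta'/s\eta'$.

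Putting the three parts together shows $\eta'\in\bS$ for every choice of $\eta$, so $\varphi$ is $S$-phantom, as claimed; structurally this runs parallel to Proposition \ref{Prop: OC1} but in the reverse direction. I expect the mod-$s$ step to be the main obstacle: unlike localization, $-\otimes_R R/sR$ is only right exact, so one cannot simply commute it past the pullback to identify $\eta'/s\eta'$ with a pullback along $\varphi/s\varphi$. The two hypotheses are exactly what repair this gap — $S$-almost flatness of $Z$ keeps the bottom row $\eta/s\eta$ exact, so that the target $\delta$ is a genuine short exact sequence, while phantomness of $\varphi$ (hence purity of $\eta'$) keeps the top row $\eta'/s\eta'$ exact — after which the short five lemma transports the splitting supplied by projectivity of $\varphi/s\varphi$.
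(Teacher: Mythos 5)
Your proof is correct and follows essentially the same route as the paper's: identify $\eta'_S$ and $\eta'/s\eta'$ with the pullbacks of $\eta_S$ along $\varphi_S$ and of $\eta/s\eta$ along $\varphi/s\varphi$, then invoke projectivity of those two morphisms to split them, purity of $\eta'$ (from phantomness) handling the remaining condition. If anything, your mod-$s$ step is more careful than the paper's, which simply asserts that tensoring the pullback square with $R/sR$ again yields a pullback diagram of $R/sR$-modules; your comparison-morphism and short-five-lemma argument, resting on exactness of the bottom row (from $S$-almost flatness of $Z$) and of the top row (from purity of $\eta'$), supplies exactly the justification that assertion needs.
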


\begin{proof}
Let $\varphi$ be phantom with flat $R$-module $Z$. Assume that $\varphi_S$ and $\varphi/s\varphi$, for every $s\in S$, are projective morphisms. Let $\eta: 0\rt X\rt Y\rt Z\rt 0$ be a short exact sequence of $R$-modules and $\delta: 0\rt X\rt W\rt U\rt 0$ be the short exact sequence obtained from the pullback of $\eta$ along $\varphi$. We need to show that $\delta\in\bS$. First, we note that since $\Tor_1^R(R/sR, Z)=0$, $\eta$ is a pure exact sequence and since $\varphi$ is phantom then $\delta$ is a pure exact sequence.

In order to show that the induced short exact sequence $\delta_S: 0\rt X_S\rt W_S\rt U_S\rt 0$ is split, note that $\delta_S$ is obtained from the pullback diagram
 \[\begin{tikzcd}
 \delta_S &	0 \rar  &X_S\rar\dar[equals]&W_S\rar\dar&U_S\dar{\varphi_S}\rar& 0\\
\eta_S: &	0\rar&X_S\rar &Y_S\rar& Z_S\rar & 0
	\end{tikzcd}\]
of $R_S$-modules.  By assumption $\varphi_S$ is a projective morphism, so $\delta_S$ is split.

 To complete the proof, we need to show that the sequence $0\rt X/sX\rt W/sW\rt U/sU\rt 0$ is split. Consider the pullback diagram
\[\begin{tikzcd}
 \delta/s\delta &	0 \rar  &X/sX\rar\dar[equals]&W/sW\rar\dar&U/sU\dar{\varphi/s\varphi}\rar& 0\\
\eta/S\eta: &	0\rar&X/sX\rar &Y/sY\rar& Z/sZ\rar & 0
\end{tikzcd}\]
 of $R/sR$-modules. Since $\varphi/s\varphi$ is a projective morphism, $\delta/s\delta$ is split.
\end{proof}

\section*{Acknowledgments}
The research of the first and the last authors is supported by the National Natural Science Foundation of China (Grant No. 12101316) and is partially supported by the Belt and Road Innovative Talents Exchange Foreign Experts project (Grant No. DL2023014002L). The second author's work is based on research funded by Iran National Science Foundation (INSF) under project No. 4001480. The second author also thanks the Nanjing University of Information Science and Technology (NUIST) for its warm hospitality and support during a visit by him. The research of the third author is supported by a grant from IPM. Part of this work was done while the second and third authors were visiting the Institut des Hautes \'{E}tudes Scientifiques (IHES) in Paris, France. They express their gratitude for the support and wonderful atmosphere at IHES.


\begin{thebibliography}{9999}
\bibitem[AHT]{AHT} {\sc L. Angeleri H\"{u}gel, D. Herbera, J. Trlifaj,} {\sl  Divisible modules and localizations,} J. Algebra {\bf 294}(2) (2005), 519-551.

\bibitem[BCE]{BCE}{\sc S. Bazzoni, M. Cortés-Izurdiaga and S. Estrada,} {\sl Periodic  modules and acyclic complexes,}  Algebras Represent. Theory  {\bf{23}} (2020), 1861-1883.

\bibitem[BP]{BP}{\sc S. Bazzoni and L. Positselski,} {\sl $S$-almost perfect commutative rings,} J. Algebra {\bf{532}} (2019), 323-356.

\bibitem[BS]{BS}{\sc S. Bazzoni and L. Salce,} {\sl Strongly flat covers,} J. London Math. Soc. {\bf 66}(27) (2002), 276-294.

\bibitem[Bel]{Bel}{\sc A. Beligiannis,} {\sl Purity and Almost Split Morphisms in Abstract Homotopy Categories: A Unified Approach via Brown Representability,} Algeb. Represent. Theory {\bf 5} (2002), 483-525.

\bibitem[BGn]{BGn}{\sc D.J. Benson and G.P. Gnacadja,} {\sl Phantom maps and purity in modular representation theory I,} Fund. Math., {\bf 161}(1-2) (1999), 37-91.

\bibitem[BG]{BG}{\sc D.J. Benson and K.R. Goodearl,} {\sl Periodic flat modules, and flat modules for finite groups,} Pacific J. Math. {\bf 196 }(2000), 45-67.

\bibitem[Coh]{Coh}{\sc P.M. Cohn,} {\sl On the free product of associative rings,} Math. Z. {\bf 71} (1959), 380-398.

\bibitem[FN]{FN}{\sc A. Facchini and Z. Nazemian,} {\sl Covering classes, strongly flatmodules, and completions,} Math. Z. {\bf 296} (2020), 239-259.

\bibitem[FS]{FS}{\sc L. Fuchs and L. Salce,} {\sl Almost perfect commutative rings,} J. Pure Appl. Algebra {\bf 222}(12) (2018), 4223-4238.

\bibitem[FGHT]{FGHT}{\sc X. H. Fu, P. A. Guil Asensio, I. Herzog and B. Torrecillas,} {\sl Ideal approximation theory,} Adv. Math.{\bf 244} (2013), 750-790.

\bibitem[FH]{FH}{\sc X. Fu and I. Herzog,} {\sl Powers of the phantom ideal,} Proc. London Math. Soc. {\bf 112} (2016), 714-752.

\bibitem[Gn]{Gn}{\sc Ph.G. Gnacadja,} {\sl Phantom maps in the stable module category,} J. Algebra {\bf 201} (2) (1998), 686-702.

\bibitem[GT]{GT}{\sc R. G\"{o}bel and J. Trlifaj}, Approximations and Endomorphism Algebras of Modules: Volume 1 - Approximations / Volume 2 - Predictions, Berlin, Boston: De Gruyter, 2012.

\bibitem[H1]{H1}{\sc I. Herzog,} {\sl The phantom cover of a module,} Adv. Math. {\bf{215}} (2007), 220-249.

\bibitem[H3]{H3}{\sc I. Herzog,} {\sl Phantom morphisms and Salce's lemma}, Maurice Auslander Distinguished Lectures and International Conference, Contemporary Mathematics 607 (eds K. Igusa, A. Martsinkovsky and G. Todorov; American Mathematical Society, Providence, RI, (2012), 57-83.

\bibitem[HR]{HR}{\sc I. Herzog and P. Rothmaler,} {\sl When cotorsion modules are pure injective,} J. Math. Logic, {\bf  9}(1) (2009), 63-102.

\bibitem[Kra]{Kra}{\sc H. Krause,} {\sl Smashing subcategories and the telescope conjecture-an algebraic approach,} Invent. math. {\bf 139} (2000), 99-133.

\bibitem[Mc]{Mc}{\sc C.A. McGibbon,} {\sl Phantom maps,} in: Handboook of Algebraic Topology, Elsevier Science B.V., Amsterdam, 1995, 1209-1257.

\bibitem[N]{N}{\sc  A. Neeman,} {\sl The Brown representability theorem and phantomless triangulated categories,} J. Algebra {\bf 151} (1) (1992), 118-155.

\bibitem[P]{P}{\sc L. Positselski,} {\sl Contraadjusted modules, contramodules, and reduced cotorsion modules,} Mosc. Math. J. {\bf 17}(3) (2017), 385-455.

\bibitem[PS1]{PS1}{\sc L. Positselski and A. Sl\'{a}vik,} {\sl On strongly flat and weakly cotorsion modules,} Math. Z. {\bf{291}}(3-4) (2019), 831-875.

\bibitem[PS2]{PS2}{\sc L. Positselski and A. Sl\'{a}vik,} {\sl Flat morphisms of finite presentation are very flat,} Annali di Matematica Pura ed Applicata {\bf{199}} (2020), 875-924.

\bibitem[Sa]{Sa}{\sc L. Salce,} {\sl Almost perfect domains and their modules,} in: Commutative Algebra: Noetherian and Non-Noetherian Perspec-tives, Springer, Berlin, 2011, 363-386.

\bibitem[Si]{Si}{\sc D. Simson,} {\sl Pure-periodic modules and a structure of pure-projective resolutions} Pacific J. Math. {\bf{207}}(1) (2002),235-256.

\bibitem[Tr1]{Tr1}{\sc J. Trlifaj,} {\sl Cotorsion theories induced by tilting and cotilting modules,} in: Abelian Groups, Rings and Modules, AGRAM 2000 Conference, Perth, Western Australia, July 9–15, 2000, in: Contemporary Mathematics, {273}, American Math. Society, Providence, RI, 2001, pp.285-300.

\bibitem[Tr2]{Tr2}{\sc J. Trlifaj,} {\sl Covers, envelopes, and cotorsion theories,} in: Lecture Notes for the Workshop “Homological Methods in Module Theory”, Cortona, September 10-16, 2000, 39 pp., Available from http://matematika.cuni.cz/dl/trlifaj/NALG077cortona.pdf.

\bibitem[Xu]{Xu}{\sc J. Xu,} {\sl Flat covers of modules,} in Lecture Notes in Mathematics, vol. 1634, Springer, 1996.
\end{thebibliography}
\end{document}